\documentclass{article}
\usepackage[utf8]{inputenc}
\usepackage[T1]{fontenc}
%nach Absätzen nicht einrücken:
%\parindent0cm
%Deutsches Literaturverzeichnis verwenden
%\usepackage{bibgerm}
%Beschriften neben einer Abbildung möglich
\usepackage[rightcaption]{sidecap}

\usepackage[inner=30mm,outer=30mm,tmargin=30mm,bmargin=30mm]{geometry}

\geometry{a4paper}
\usepackage{latexsym,amsfonts,amsmath,amssymb,epsfig,tabularx,amsthm,dsfont,mathrsfs}
%numbers of equations related to section number
%\numberwithin{equation}{section}
\usepackage{graphicx}
\usepackage{enumerate}
\usepackage{bbm} %indicator function with mathbbm{1}
%\PassOptionsToPackage{gray}{xcolor} %Greyscale plots
\usepackage{tikz}
\usepackage{verbatim}
\usetikzlibrary{arrows}
\usetikzlibrary{plotmarks}

%\usepackage{titling}
%\newcommand{\subtitle}[1]{%
%  \posttitle{%
%    \par\end{center}
%    \begin{center}\large#1\end{center}
%    \vskip0.5em}%
%}

%backref=page
\usepackage[colorlinks=false]{hyperref} %Notfalls nach hinten rücken, aber vor geometry
\hypersetup{linkbordercolor=red,
        linkcolor=black,
        citecolor=black,
        urlcolor=blue}
\usepackage{caption}

\usepackage{subfig}
				
\usepackage[framed,numbered,autolinebreaks,useliterate]{mcode}
\usepackage[]{listings} %Folgender Kram für Umlaute in Matlab Code
\lstset{literate=%
    {Ö}{{\"O}}1
    {Ä}{{\"A}}1
    {Ü}{{\"U}}1
    {ß}{{\ss}}1
    {ü}{{\"u}}1
    {ä}{{\"a}}1
    {ö}{{\"o}}1
    {~}{{\textasciitilde}}1
}				
\usepackage{cprotect} %verb in caption verwenden

% This package prints the labels in the margin
% \usepackage[notref,notcite]{showkeys}

% \setlength{\textwidth}{6.5in} \setlength{\oddsidemargin}{0in}
% \setlength{\evensidemargin}{0in}

\newcommand{\ii}{\mathrm{i}}
\newcommand{\e}{\mbox{e}}

\newcommand{\eim}[1]{{\e}^{-2\pi{\ii} #1}}
\newcommand{\T}{\mathbb{T}}

%% Symbols for stochastics

%% Symbols for analysis

 % differential symbol for use in derivatives
 %divergence
\newcommand{\md}{\mathup{md}_{\T}}
\newcommand{\sep}{\mathup{sep}}

%% Symbols for sets

\newcommand{\Lip}{\mathup{Lip}}

\DeclareMathOperator{\supp}{supp}

%\DeclareMathOperator\Sym{Sym}

%% Operators yielding numbers or vectors
%\DeclareMathOperator{\Lip}{Lip}
%\newcommand{\ind}[1]{\mathds{1}_{#1}}

%\newcommand\dist{{\rm dist}}
%\newcommand{\C}[1]{\mathcal{C}_{d}^{#1}}
\DeclareMathOperator*{\argmin}{argmin}

\DeclareMathOperator{\dist}{dist}

\DeclareMathOperator{\Id}{Id}

\DeclareMathOperator{\proj}{proj}

%% macros for complexity analysis

%\newcommand\car{{\bf c_{\textup{ari}}}}
%\newcommand\comp{{\mathop{\rm comp}}}     
%\newcommand\cin{{\bf c_{\rm info}}}     
%\newcommand\cbr{{\bf c_{\rm top}}}

%\newcommand\MCprob{\textup{MC-prob}}

%Several abbreviations

% Simplified commands and basic letters

%vector names

%% Standard sets of numbers

\newcommand{\R}{{\mathbb R}}
\newcommand{\C}{{\mathbb C}}
\newcommand{\N}{{\mathbb N}}
\newcommand{\Z}{{\mathbb Z}}

%extra sets

\DeclareMathAlphabet{\mathup}{OT1}{\familydefault}{m}{n}

\newcommand{\widebar}[1]{\mbox{\kern1.5pt\hbox{\vbox{\hrule height 0.6pt \kern0.35ex
        \hbox{\kern-0.15em \ensuremath{#1 }\kern0.0em}}}}\kern-0.1pt}
\newcommand{\ESPRIT}{\mathup{ESPRIT}}

\newlength{\fixboxwidth}
\setlength{\fixboxwidth}{\marginparwidth}
\addtolength{\fixboxwidth}{-7pt}

\usepackage{color}
\definecolor{darkgreen}{rgb}{0,0.5,0}

\usepackage{algorithmic}
\usepackage[Algorithm]{algorithm}
%algorithm ermöglicht Algorithmus-Umgebungen, algorithmic schreiben von Pseudocode
%Beschriften neben einer Abbildung möglich
\usepackage[rightcaption]{sidecap}

% Für Durchstreichen im Text
\usepackage{selinput}
\SelectInputMappings{
  adieresis={ä},
  germandbls={ß}
}

\usepackage[english]{babel}
\usepackage[capitalise]{cleveref}
\theoremstyle{plain}

\newtheorem{Thm}{Theorem}[section]
\newtheorem{Lem}[Thm]{Lemma}

\newtheorem{Cor}[Thm]{Corollary}

\theoremstyle{definition}
\newtheorem{Rem}[Thm]{Remark}

\newtheorem{Def}[Thm]{Definition}

\newtheorem*{Acknowledgements}{Acknowledgements}

\crefname{subsection}{Subsection}{Subsections}

\crefname{Lem}{Lemma}{Lemmas}
\Crefname{Lem}{Lemma}{Lemmas}
\crefname{Thm}{Theorem}{Theorems}
\Crefname{Thm}{Theorem}{Theorems}
\crefname{Prop}{Proposition}{Propositions}
\Crefname{Prop}{Proposition}{Propositions}
\crefname{Cor}{Corollary}{Corollaries}
\Crefname{Cor}{Corollary}{Corollaries}
\crefname{Def}{Definition}{Definitions}
\Crefname{Def}{Definition}{Definitions}
\crefname{Rem}{Remark}{Remarks}
\Crefname{Rem}{Remark}{Remarks}

\title{Sparse super resolution is Lipschitz continuous}
\author{Mathias Hockmann\thanks{Osnabr\"uck University, Institute of Mathematics \texttt{\{mahockmann,skunis\}@uos.de}} \thanks{Osnabr\"uck University, Research Center of Cellular Nanoanalytics} \and Stefan Kunis\footnotemark[1] \footnotemark[2]}

\begin{document}
\bibliographystyle{abbrv}
\maketitle

\section*{Abstract}
Motivated by the application of neural networks in super resolution microscopy, this paper considers super resolution as the mapping of trigonometric moments of a discrete measure on $[0,1)^d$ to its support and weights. We prove that this map satisfies a local Lipschitz property where we give explicit estimates for the Lipschitz constant depending on the dimension $d$ and the sampling effort. Moreover, this local Lipschitz estimate allows to conclude that super resolution with the Wasserstein distance as the metric on the parameter space is even globally Lipschitz continuous. As a byproduct, we improve an estimate for the smallest singular value of  multivariate Vandermonde matrices having pairwise clustering nodes.

\medskip
\noindent\textit{Key words and phrases}:
	frequency analysis,
	Wasserstein metric,
	super resolution,
	stability analysis.
	
	\medskip
	
	\noindent\textit{2020 AMS Mathematics Subject Classification} : \text{
		65T40, % Trigonometric approximation and interpolation
		%42A15,  % Trigonometric interpolation
		42B05, % Fourier series and coefficients in several variables
		49Q22. % Optimal transportation
	}

\section{Introduction} 
In recent papers \cite{Nehme_20,Nehme_18,Speiser_20}, methods for single molecule localisation microscopy by deep neural networks have been proposed, but a theoretical guarantee for the quality of neural network approaches is still missing.
We follow the common approach to distinguish expressivity, training, and generalisation of neural networks and our sole interest is on the first issue.

Of course, a deeper understanding of the particular underlying inverse problem is needed and therefore we study single molecule localisation microscopy formulated mathematically as the task to recover the pairwise different \emph{nodes} $t_j\in[0,1)^d$ and the \textit{weights} $c_j\in\C$,  $j=1,\dots,M$, (referred to as parameters) of the Dirac measure
\begin{align} \label{Dirac_sum}
\mu(x)=\sum_{j=1}^M c_j \delta_{t_j}(x)
\end{align}
from measurements. A frequently used assumption is that the measurements are given through the \textit{trigonometric moments}
\begin{align} \label{eq_spectral_moments}
    \hat{\mu}(k)=\int_{[0,1]^d } \eim{kx} d\mu(x) = \sum_{j=1}^M c_j e^{-2\pi i k t_j}, \quad k\in \mathcal{B},
\end{align}
of the measure $\mu$, where $\mathcal{B}\subset \Z^d$ is some finite set.
The measurement process is also referred to as the \textit{moment map} and we call the inverse mapping the \textit{Prony-map}.
For the univariate case $d=1$, one can think of the index set of known moments being
$\mathcal{B}=\left\{-N,-N+1,\dots,N-1,N\right\}$ for some $N\in\N$ and Prony's method \cite{Prony_1795} reconstructs the parameters if $N\geq M$.
Generalisations for $d>1$ have been proposed e.g.~in \cite{KuPeRoOh16}
but suffer from a somewhat more complicated algorithmic framework.

Here, we are interested in whether it is possible that a neural network approximates the Prony-map.
In contrast to the approach of recreating one of the standard Prony-like methods via a neural network, we analyse the regularity of the Prony-map in order to apply results about the expressivity of neural networks (cf.\,\cite{Elbraechter_19,Guehring_20,Yarotsky_16,Zhou_20}).
We aim at a quantitative description of the features of the Prony-map and prove an upper bound for its Lipschitz constant.\footnote{If we use $2M$ moments, we see directly by \eqref{eq_spectral_moments} that the moments are entire functions of the $M$ nodes and $M$ weights such that the inverse mapping theorem gives local smoothness of the Prony-map around every point where the Jacobian of the mapping from the moments to the parameters has non-vanishing determinant. As the determinant is analytic, we have local smoothness of the Prony-map almost everywhere. Nevertheless, we are interested in more applicable and quantitative statements about the regularity of the process of super resolution.}

Our main result is \cref{Cor_global_Lipschitz}, stating that complex measures $\mu_1,\mu_2$ each having well-separated support nodes satisfy
\begin{align*}
    W_1(\mu_1,\mu_2)
    \leq  2.3 \cdot\|\hat{\mu}_1-\hat{\mu}_2\|_2
    %\leq  7 \sqrt{M} \left(\frac{2\sqrt{d} }{3 N}\right)^{d/2} \|\hat{\mu}_1-\hat{\mu}_2\|_2
    %  W_1(\mu_1,\mu_2) \leq  \sqrt{3M} \left(1+\frac{3}{\sqrt{2}}\right)  \left(\frac{2}{3}\right)^{d/2-1/2} \frac{d^{d/4}}{N^{d/2}} \|\hat{\mu}_1-\hat{\mu}_2\|_2
\end{align*}
where $W_1$ is the 1-Wasserstein distance and the norm on the right hand side is on the low-pass region $\mathcal{B}=\{k\in\Z^d: \|k\|_2\leq N\}$. This builts upon local Lipschitz results which have been developed for the univariate and a particular and different bivariate case first by Diederichs in \cite{Diederichs_19,Diederichs_18}.
We emphasise that these local results generalise some Lipschitz-like estimates known from the stability analysis of subspace methods like the matrix pencil method \cite{Hua_1990} or the ESPRIT algorithm \cite{Roy_1989,Potts_17}.
Technically, all approaches heavily rely on the construction of \textit{localising functions} (extremal functions or minorants) as introduced by Beurling and Selberg (cf.\,\cite{Vaaler_1985}) in the early 20th century.
Such functions also have been used to establish lower bounds of the form
\begin{align*}
    \sum_{k\in\mathcal{B}} \left|\sum_{j=1}^M c_j \eim{k t_j}\right|^2 \geq C \|c\|_2^2
\end{align*}
for well-separated nodes $t_j$ and some constant $C=C(\mathcal{B},\{t_j\}_{j=1}^M)>0$ by Moitra, Aubel, and Bölcskei \cite{Moitra_15,Aubel_19} and by the second author and others \cite{Kunis_17,KuNaSt21} for the univariate and multivariate case, respectively.
As a byproduct of our main result here, we improve upon recent estimates for the above constant $C$ in the case of pairwise clustering nodes \cite{Nagel_20}.

This paper is organized as follows: In \cref{sec_Preliminaries}, we rigorously define the Prony-map as the inverse of the mapping from the parameters to the moments. The main results about the local and global Lipschitz property are explained in \cref{sec_local_Lipschitz} where we proceed from the univariate case to the bivariate case and later to arbitrary dimension $d$. Finally, the results for the smallest singular value of Vandermonde matrices with pairwise clustering nodes are given in \cref{sec_Vandermonde}.

%%%%%%%%%%%%%%%%%%%%%%%%%%%%%%%%%%%%%%%%%%%%%%%%%%%%%%%%%%%%%%%%%%%%%%%%%%%%%%%%%%%%%%%%%%%%%%
%%%%%%%%%%%%%%%%%%%%%%%%%%%%%%%%%%%%%%%%%%%%%%%%%%%%%%%%%%%%%%%%%%%%%%%%%%%%%%%%%%%%%%%%%%%%%%
%%%%%%%%%%%%%%%%%%%%%%%%%%%%%%%%%%%%%%%%%%%%%%%%%%%%%%%%%%%%%%%%%%%%%%%%%%%%%%%%%%%%%%%%%%%%%%
\section{Preliminaries and known stability of subspace methods} \label{sec_Preliminaries}
We define the following quantities on the $d$-dimensional torus $\T^d=\left(\R/\Z\right)^d\cong [0,1)^d$ which is the domain where the nodes of the measure come from. At first, we present the notation for the univariate case $d=1$ and generalize it for higher dimensions in \cref{subsec_bivariate}.
\begin{Def}[Separation and matching distance] \label{Def_matching_distance}
For a set $Y=\{t_1,\dots,t_M\}\subset\T$ we define the \emph{minimal-separation distance}
\begin{align*}
    \sep\,Y:=\min_{j\neq l} \|t_j-t_l\|_{\T}:=\min_{r\in\Z, j\neq l} |t_j-t_l+r|.
\end{align*}
The difference to a second set $Y'=\left\{t_1',\dots,t_M'\right\}\subset \T$ of equal cardinality is defined by the \textit{matching distance}
\begin{align*}
    \md(Y,Y'):=\min_{\pi} \max_{j\in\{1,\dots,M\}} \|t_j-t_{\pi(j)}'\|_{\T}
\end{align*}
where $\pi$ can be an arbitrary permutation on $\{1,\dots,M\}$. See \cref{fig:sep_on_torus} for an illustration.
\end{Def}
\begin{figure}[ht]
    \centering
    \begin{tikzpicture}[>=triangle 45,x=10cm,y=0.2cm,scale=1]
    \draw[-] (0,0) -- (1,0);
    \draw[-] (0,-0.1) node[below]{0} -- (0,0.1); 
    \draw[-] (1,-0.1) node[below]{1} -- (1,0.1); 
    \foreach \x in {0.15,0.2,0.6,0.9}
    \draw[mark=*,mark size=3pt,mark options={color=green}] plot coordinates {(\x,0)};
    \draw[mark=*,mark size=3pt,mark options={color=green}] plot coordinates {(1.1,0.5)} node[right]{\,$Y$};
    \foreach \x in {0.12,0.3,0.65,0.75}
    \draw[mark=triangle*,mark size=3pt,mark options={color= blue}] plot coordinates {(\x,0)};
    \draw[mark=triangle*,mark size=3pt,mark options={color=blue}] plot coordinates {(1.1,-1)} node[right]{\,$Y'$};
    \draw[|-|] (0.15,-1) -- node[below]{$\sep\,Y$} (0.2,-1);
    \draw[|-|] (0.65,-1) -- node[below]{$\sep\,Y'$} (0.75,-1);
    \draw[|-|] (0.75,1) -- node[above]{$\md(Y,Y')$} (0.9,1);
    \end{tikzpicture}
    \caption{Definition of separation distance on $\T$ and matching distance between two finite sets $Y,Y'\subset \T$. The matching distance might be seen as induced by the maximum norm on $\T^M$ modulo the symmetric group.}
    \label{fig:sep_on_torus}
\end{figure}
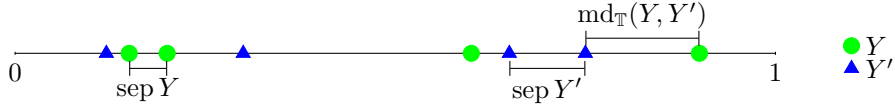

\begin{Def}[Probability-like atomic measures and their trigonometric moments]
We call a complex Borel measure $\mu$ on a topological space $X$ a \emph{probability-like measure} if $\mu$ has normalized mass, i.e.\,$\mu(X)=1$, and denote the set of probability-like measures on $\T$ by $\mathcal{M}(\T)$.\footnote{We restrict ourselves to these kind of measures in order to have the Wasserstein distance as a metric on $\mathcal{M}(\T)$.} For some specified minimal weight $c_{\min}>0$ and minimal separation $q>0$, we consider the set of measures
\begin{align*}
    \mathcal{M}_{c_{\min}}(q) := \left\{\sum_{t\in Y} c_t \delta_{t}: \quad c_t\in\C,\,|c_t|\geq c_{\min},\,\sum_{t\in Y} c_t =1,\,Y\subset \T,\,\sep\, Y\geq q\right\} \subset \mathcal{M}(\T)
\end{align*}
and we denote the \textit{node set} of $\mu\in \mathcal{M}_{c_{\min}}(q)$ by $Y^{\hat{\mu}}$. For $N\in\N$, the truncated moment map now assigns to each measure $\mu\in \mathcal{M}_{c_{\min}}(q)$ its trigonometric moments $\hat \mu\in\C^{2N+1}$ and one can also think about this as the Fourier transform of $\mu$ evaluated at frequencies being the integers between $-N$ and $N$. Hence, the image of this map is the following set of exponential sums, see also \cite[section 3]{Diederichs_19},
\begin{align*}
    \widehat{\mathcal{M}}_{c_{\min}}^N(q) := \left\{\left(\sum_{t\in Y} c_t \eim{t k}\right)_{k=-N}^N:\quad \sum_{t\in Y} c_t \delta_{t}\in \mathcal{M}_{c_{\min}}(q)\right\} \subset \C^{2N+1}.
\end{align*}

\end{Def}

The standard norm on the vector space $\C^{2N+1}$,
\begin{align*}
    \|\hat{\mu}\|_2=\left(\sum_{k=-N}^N |\hat{\mu}(k)|^2\right)^{1/2},
\end{align*}
always induces a metric on $\widehat{\mathcal{M}}_{c_{\min}}^N(q)$.
We will see later in \cref{Thm_Lipschitz_univariate} that the previously mentioned truncated moment map is injective  for $q>\frac{2}{N}$ such that the following inverse map is well defined.

\begin{Def}[Prony-map]
Let $N\in\N$ and $c_{\min}>0$. Then, the \textit{Prony-map} $\mathscr{P}$ is the map
\begin{align*}
    \mathscr{P}: \quad \bigcup_{q>\frac{2}{N}} \widehat{\mathcal{M}}_{c_{\min}}^N(q) \to \mathcal{M}(\T), \quad \left(\sum_{t\in Y} c_t \eim{t k}\right)_{k=-N}^N \mapsto \sum_{t\in Y} c_t \delta_t.
\end{align*}

\end{Def}

We want to understand the stability of the inverse problem of reconstructing the measure $\mu$ from its trigonometric moments. This means that we are interested in the difference between the two measures which are the outputs of some optimal algorithm 
if we use two similar moment vectors as inputs for the algorithm. By taking known algorithms like the parametric approaches of matrix pencil (MP) or estimation of signal parameters via rotational invariance techniques (ESPRIT) and their stability results, we can already get an impression what is at least possible for an optimal solution of the inverse problem. Especially, the stability of ESPRIT has been extensively studied recently (cf.\,\cite{Li_20,Nagel_20,Potts_17}):

\begin{Def}[ESPRIT-map] \label{Def_ESPRIT_map}
Let the number of nodes $M$ be known for the ESPRIT-method. We define $\mathscr{P}_{\ESPRIT}: \C^{2N+1} \to \T^M$ mapping any perturbed moment vector to the vector of nodes computed by the ESPRIT-method. We call this mapping \textit{ESPRIT-map}.
\end{Def}
Note that we simplify the situation by just considering the mapping of the moments to the nodes and not to the weights for the ESPRIT algorithm. This is already a major issue with parametric methods where estimation of the parameters is usually divided into the computation of the nodes and weights successively. For two finite subsets of $\T$ with the same number of elements, one might use the matching distance displayed in \cref{fig:sep_on_torus} and defined in \cref{Def_matching_distance}.
% Shift: N->2N+1, L=\lceil N/2\rceil-> N+1, \|E\|_2\leq (N+1)*\|E\|_{\infty}
If the nodes are separated by $\frac{2}{N+1}$, the number of nodes $M$ is known and the error in the moments is small, one can apply the following result by Nagel (cf.\,\cite{Nagel_20}) which is based on \cite{Aubel_16,Li_20,Potts_17} and presented with a shift in the frequencies and a different assumption on the separation in order to fit into our setting.
\begin{Thm} \textup{(Stability of ESPRIT, $d=1$, cf.\,\cite[Thm 4.3.14]{Nagel_20})} \label{Prop_Dominik}
For $\mu_0\in \mathcal{M}_{c_{\min}}(\frac{2}{N+1})$, 
the truncated moment vector is denoted by $\hat{\mu_0}\in \C^{2N+1}$ and the ESPRIT-method has access to the true number of nodes $M$ as well as to a perturbed moment vector $\hat{\mu}\in\C^{2N+1}$. If the perturbation satisfies
\begin{align*}
    \|\hat{\mu}-{\hat\mu_0}\|_{\infty} < \frac{c_{\min}}{60},
\end{align*}
then the matching distance between the reconstructed nodes $\mathscr{P}_\ESPRIT({\hat{\mu}})$ and the ground truth is
\begin{align}  \label{eq_Dominik}
    \md(\mathscr{P}_\ESPRIT(\hat{\mu}_0),\mathscr{P}_\ESPRIT(\hat{\mu}))\leq \frac{190 M}{c_{\min}} \|\hat{\mu}-{\hat\mu_0}\|_{\infty}.
\end{align}
\end{Thm}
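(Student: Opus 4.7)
The plan is to follow the standard ESPRIT perturbation analysis based on the Vandermonde factorisation of the Hankel matrix of moments. Set $L=N+1$ and form the Hankel matrix $H=(\hat\mu_0(k+\ell-N))_{k,\ell=0}^{L-1}\in\C^{L\times L}$, which admits the factorisation $H=V D V^{\top}$ where $V\in\C^{L\times M}$ is the Vandermonde matrix with nodes $z_j=\eim{t_j}$ on the unit circle and $D=\diag(c_j\eip{N t_j})$. Under the separation hypothesis $\sep Y^{\hat\mu_0}\ge 2/(N+1)$, an extremal-function argument of Selberg--Beurling type (see \cite{Moitra_15,Kunis_17}) gives $\sigma_M(V)\gtrsim\sqrt{L}$, hence $\sigma_M(H)\gtrsim L\,c_{\min}$ with a controllable absolute constant.

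With the signal subspace $\ran(V)$ identified as the leading $M$-dimensional left singular subspace of $H$, I would then apply Wedin's $\sin\Theta$ theorem to the perturbed matrix $\tilde H=H+E$, noting that $\|E\|_2\le L\|\hat\mu-\hat\mu_0\|_\infty$. The smallness assumption $\|\hat\mu-\hat\mu_0\|_\infty<c_{\min}/60$ is calibrated exactly so that the resulting canonical angle between $\ran(V)$ and its perturbed counterpart $\tilde U_M$ stays strictly below a constant threshold, in particular keeping $\tilde U_M^{\downarrow}$ of full column rank and invertible on its image so that the ESPRIT pseudoinverse $\Psi=(\tilde U_M^{\downarrow})^{+}\tilde U_M^{\uparrow}$ is well defined.

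The ESPRIT step exploits the shift invariance $V^{\uparrow}=V^{\downarrow}\Phi$ with $\Phi=\diag(\eim{t_j})$, which identifies the nodes with the eigenvalues of $\Phi$. A Bauer--Fike-type bound applied to $\Psi$ versus the similar matrix $(V^{\downarrow})^{+}V^{\uparrow}$, combined with the Wedin estimate and the singular value bound from Step~1, yields a linear estimate $\max_j|\lambda_j(\Psi)-\eim{t_j}|\lesssim M\|\hat\mu-\hat\mu_0\|_\infty/c_{\min}$, where the factor $M$ arises from the condition number of the eigenbasis of $\Phi$. I would then lift the unit-circle perturbation to a matching-distance bound on $\T$ via $|\eim{a}-\eim{b}|=2\sin(\pi\|a-b\|_\T)\ge 4\|a-b\|_\T$, which is valid once $\|a-b\|_\T$ is at most a small constant, producing the final estimate \eqref{eq_Dominik}.

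The main obstacle is quantitative book-keeping: the constants $60$ and $190$ are not loose. Each of the four building blocks (lower bound for $\sigma_M(V)$, Wedin's theorem, Bauer--Fike for $\Psi$, and the $\sin$-to-argument passage) contributes a multiplicative constant, and the threshold $c_{\min}/60$ has to be tight enough to secure both the invertibility of $\tilde U_M^{\downarrow}$ and the local linearisation of the complex exponential. A naive concatenation produces significantly worse numbers; the sharp form requires the careful absolute-constant tracking developed in \cite{Potts_17,Li_20} and extended in \cite{Nagel_20}, which is where I would import the explicit values rather than reprove them from scratch.
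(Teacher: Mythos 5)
This theorem is not proved in the paper at all: it is quoted from Nagel \cite[Thm.\,4.3.14]{Nagel_20} (itself building on \cite{Potts_17,Li_20}), only restated with a frequency shift and an adapted separation assumption. Your outline follows exactly the route by which the cited result is obtained (Hankel factorisation $H=VDV^{\top}$, Moitra-type lower bound on $\sigma_M(V)$ under the $\frac{2}{N+1}$ separation, Wedin's $\sin\Theta$ theorem for the signal subspace, an eigenvalue perturbation bound for the spectral matrix, and the passage from the unit circle back to $\T$), so as a roadmap it is faithful to the literature the paper leans on. Note also that comparing $\mathscr{P}_{\ESPRIT}(\hat{\mu})$ with $\mathscr{P}_{\ESPRIT}(\hat{\mu}_0)$ is legitimate in your scheme because ESPRIT applied to the exact moments recovers the nodes exactly; you use this implicitly and it deserves one sentence.

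As a self-contained proof of the stated theorem, however, there are genuine gaps. First, the quantitative content of the statement is precisely the pair of constants $60$ and $190M/c_{\min}$, and your final step is to ``import the explicit values rather than reprove them'' --- at that point the argument collapses into the same citation the paper already makes, so nothing beyond the qualitative $O(M\|\hat{\mu}-\hat{\mu}_0\|_{\infty}/c_{\min})$ bound is actually established. Second, Bauer--Fike only yields that every eigenvalue of $\Psi$ lies near \emph{some} node $\eim{t_j}$; to control the matching distance $\md$ you need a one-to-one correspondence, which requires an additional eigenvalue-counting or disjoint-disc argument using the separation $\frac{2}{N+1}$ together with the smallness threshold on the perturbation --- this is exactly where the constant $60$ does its work and cannot be waved through. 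Third, the factor $M$ is misattributed: $\Phi=\diag(\eim{t_j})$ is a unitary diagonal matrix, so its eigenbasis has condition number one. In the signal-space coordinates one has $\Psi\approx T^{-1}\Phi T$ with $T$ the change of basis between the computed orthonormal singular basis and $V$, so the relevant quantity is $\kappa(T)\sim\sigma_1(V)/\sigma_M(V)$, and in \cite{Nagel_20,Potts_17,Li_20} the factor $M$ enters through bounds of the type $\sigma_1(V)\le\sqrt{M(N+1)}$ and norm conversions between $\|\cdot\|_{\infty}$ on the moment vector and spectral norms of the Hankel perturbation, not through the conditioning of $\Phi$. Finally, minor points: the inequality $2\sin(\pi x)\ge 4x$ holds on all of $[0,\tfrac12]$ (no smallness needed), and one must also account for projecting the eigenvalues of $\Psi$, which need not lie on the unit circle, back onto it (an extra factor of at most $2$).
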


\begin{Rem}
In the language of \cref{Def_ESPRIT_map} we can formulate this as follows: For every ground truth $\hat{\mu}_0\in \widehat{\mathcal{M}}_{c_{\min}}^N(\frac{2}{N+1})$ there are $\delta>0$ and $L>0$ such that 
\begin{align} \label{eq_Lipschitz1}
    \md(\mathscr{P}_\ESPRIT(\hat{\mu}_0),\mathscr{P}_\ESPRIT(\hat{\mu}))\leq L \|\hat{\mu}-\hat{\mu}\|_{\infty}
\end{align}
for all $\hat{\mu}\in B_{\delta}(\hat{\mu}_0)$. Note that $\delta=\delta(c_{\min})$ and $L=L(c_{\min},|Y^{\hat{\mu}_0}|)$ depend on $\hat{\mu}_0\in \widehat{\mathcal{M}}_{c_{\min}}^N(\frac{1}{N+1})$ by \cref{Prop_Dominik}. Hence, we can not prove by this that $\mathscr{P}_\ESPRIT$ is locally Lipschitz continuous in the usual way since the Lipschitz condition \eqref{eq_Lipschitz1} does not hold for arbitrary pairs $\hat{\mu}_1,\hat{\mu}_2\in B_{\delta}(\hat{\mu}_0)$ but only for the matching distance between  $\mathscr{P}_\ESPRIT(\hat{\mu})$ and the ground truth $\mathscr{P}_\ESPRIT(\hat{\mu}_0)$.\footnote{Local Lipschitz continuity would demand that $\md(\mathscr{P}_\ESPRIT(\hat{\mu}_1),\mathscr{P}_\ESPRIT(\hat{\mu}_2))$ can be bounded in terms of $\|\hat{\mu}_1-\hat{\mu}_2\|_{\infty}$ for every pair $\hat{\mu}_1,\hat{\mu}_2\in B_{\delta}(\hat{\mu})$. However, applying the triangle inequality and \eqref{eq_Dominik} gives a bound of $\md(\mathscr{P}_\ESPRIT(\hat{\mu}_1),\mathscr{P}_\ESPRIT(\hat{\mu}_2))$ in terms of $\|\hat{\mu}_0-\hat{\mu}_1\|_{\infty}+\|\hat{\mu}_0-\hat{\mu}_2\|_{\infty}$.}
\end{Rem}

Compared to the ESPRIT-map, the order in $N$ in the stability result \eqref{eq_Dominik} might be improved for the Prony-map:
\begin{Rem} \label{Rem_Batenkov_2020}
Assume as in \cite{Batenkov_2020} that the node set of $\hat{\mu}$, $Y^{\hat{\mu}}=\{t_j\}_j$, does only have one cluster $Y\subset Y^{\hat{\mu}}$ where the nodes can lie very closely together, whereas all nodes  outside of the cluster, i.e.\,$t_\ell\in Y^{\hat{\mu}}\setminus Y$, satisfy
\begin{align*}
    |t_\ell-t_j|\geq \frac{c}{N}
\end{align*}
for all $t_j\in Y^{\hat{\mu}}$, $j\neq\ell$, and a constant $c>0$. In contrast to our setting, \cite{Batenkov_2020} starts from a more information theory based approach of assuming continuous knowledge about the Fourier transform of the measure $\mu$ instead of discrete moments as in \eqref{eq_spectral_moments}. The main result \cite[Thm.\,2.8]{Batenkov_2020} compares the nodes $t_j^{(1)}$ and $t_j^{(2)}$ corresponding to $\hat{\mu}_1$ and $\hat{\mu}_2$ lying in a $\epsilon$-neighbourhood of $\hat{\mu}$ for small enough $\epsilon>0$. It shows that the stability of the recovery problem for the $\ell$th node $t_\ell$ can be formulated as
\begin{align}
    |t_\ell^{(1)}-t_\ell^{(2)}| \asymp \frac{\|\hat{\mu}_1-\hat{\mu}_2\|_{\infty}}{N} \label{eq_Batenkov_2020}
\end{align}
if $t_\ell\in Y^{\hat{\mu}}$ does not belong to the cluster, i.e.\,$t_\ell\notin Y$. The constants hidden behind the $\asymp$ symbol\footnote{We write $a\asymp b$ if there are positive constants $c_1,c_2$ such that $c_1b\leq a\leq c_2 b$.} are said to depend on the number of nodes, a-priori bounds for the weight parameters $|c_j|$ and geometric parameters of the node set but not on $N$. Even if this gives only a local Lipschitz result in a slightly different setup, these results indicate that one might prove a global Lipschitz property of the Prony-map which we do in the next section. We also make the constant $c$ for the separation of the nodes as well as the Lipschitz constant in estimates like \eqref{eq_Batenkov_2020} explicit.   
\end{Rem}

\section{Main results} \label{sec_local_Lipschitz}
At first, we review known local results for the uni- and bivariate case. After that, we present our own approach to obtain local and global results for arbitrary dimensions $d$. 
\subsection{Local Lipschitz property in the univariate case}
The findings of Diederichs \cite{Diederichs_19,Diederichs_18} improving Moitra's lower bound can be applied in order to establish a local Lipschitz property of the Prony-map for $d=1$.
\begin{Thm}\textup{(Diederichs, cf.\,\cite[Cor.\,5.1]{Diederichs_19})} \label{Prop_lower_bound}
Let $\hat{\mu}_1,\hat{\mu}_2\in \widehat{\mathcal{M}}_{c_{\min}}^N(\frac{3}{N+1})$. Assume that the difference of $\hat{\mu}_1$ and $\hat{\mu}_2$ satisfies
    \begin{align}
        \|\hat{\mu}_1-\hat{\mu}_2\|_2^2 = \sum_{k=-N}^N |\hat{\mu}_1(k)-\hat{\mu}_2(k)|^2 < \frac{4N+4}{3} c_{\min}^2. \label{eq_Diederichs_condition}
    \end{align}
Then, we have $|Y^{\hat{\mu}_1}|=|Y^{\hat{\mu}_2}|$ and for every $t\in Y^{\hat{\mu}_1}$ there is a unique $t'=:\eta(t)\in Y^{\hat{\mu}_2}$ with $\|t-\eta(t)\|_{\T}<\frac{3}{2N+2}$. Additionally, the estimate
    \begin{align}
        \|\hat{\mu}_1-\hat{\mu}_2\|_2^2 &\geq \frac{2\pi^2 (N+1)^3}{3^5} \sum_{t\in Y^{\hat{\mu}_1}} \left(|c_t|^2+|c_{\eta(t)}|^2\right) \|t_j-\eta(t_j)\|_{\T}^2 + \frac{N+1}{3} \sum_{t\in Y^{\hat{\mu}_1}} |c_t-c_{\eta(t)}|^2 \nonumber\\
        &\geq \frac{4\pi^2 (N+1)^3}{3^5} c_{\min}^2 \sum_{t\in Y^{\hat{\mu}_1}} \|t-\eta(t)\|_\T^2 + \frac{N+1}{3} \sum_{t\in Y^{\hat{\mu}_1}} |c_t^{(1)}-c_{\eta(t)}^{(2)}|^2 \label{eq_Diederichs}
    \end{align}
    holds.
\end{Thm}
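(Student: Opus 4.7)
The plan is to pair the moment difference $\hat\mu_1 - \hat\mu_2$ against suitable trigonometric polynomials of degree at most $N$ via Parseval's identity, exploiting Beurling--Selberg type extremal functions that become available precisely because the separation threshold $3/(N+1)$ allows each node to be isolated by a polynomial whose mass decays strongly enough to render the contributions of the other nodes negligible.

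First, to establish the bijection $\eta$ and the equality $|Y^{\hat\mu_1}| = |Y^{\hat\mu_2}|$, I would construct for each $t \in Y^{\hat\mu_1}$ a polynomial $K_t$ of degree at most $N$ with $K_t(t) = 1$, $|K_t|\leq 1$ on $\T$, and rapid decay away from $t$, so that $|K_t(s)|$ is small whenever $\|s-t\|_\T \geq \tfrac{3}{2(N+1)}$. Substituting this into the identity
\[
\sum_{k=-N}^N \widehat{K_t}(k)\bigl(\hat\mu_1(k)-\hat\mu_2(k)\bigr) = \sum_{s\in Y^{\hat\mu_1}} c_s K_t(s) - \sum_{s\in Y^{\hat\mu_2}} c_s K_t(s)
\]
and applying Cauchy--Schwarz on the left against \eqref{eq_Diederichs_condition}, the threshold is calibrated so that $\mu_2$ must place a node of weight at least $c_{\min}$ inside the ball of radius $\tfrac{3}{2(N+1)}$ around $t$. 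Separation $\geq \tfrac{3}{N+1}$ forces this node to be unique, so $\eta(t)$ is well defined, and exchanging the roles of $\mu_1$ and $\mu_2$ makes $\eta$ a bijection.

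For the quantitative inequality, the key step is to build two refined extremal polynomial families of degree at most $N$ tailored to each $t$: a polynomial $P_t$ that localises in value at $t$, so that pairing against $\hat\mu_1-\hat\mu_2$ extracts $c_t - c_{\eta(t)}$ up to a remainder controlled by the other nodes, and a companion $Q_t$ which vanishes at $t$ with prescribed first derivative of order $N$, so that pairing extracts $(t-\eta(t))\,c_{\eta(t)}$ at leading order. Summing the squared pairings over $t \in Y^{\hat\mu_1}$ and invoking a Bessel-type inequality, valid because under the separation assumption the families $\{P_t\}_t$ and $\{Q_t\}_t$ are almost orthogonal in the discrete Fourier sense, produces the two positive lower bounds. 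The cubic factor $(N+1)^3$ in the node term arises as $(N+1)\cdot(N+1)^2$, one power from the $\ell^2$-norm of a Fej\'er-like kernel and two powers from $|Q_t'(t)|^2 = O(N^2)$; the linear factor $N+1$ in the weight term is just the $\ell^2$-norm of the value-localising kernel. The second inequality in \eqref{eq_Diederichs} then follows immediately from $|c_t|,|c_{\eta(t)}|\geq c_{\min}$.

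The hard part will be producing $P_t$ and $Q_t$ so that simultaneously the cross-node contributions are controlled and the sharp constants $\tfrac{2\pi^2}{3^5}$ and $\tfrac{1}{3}$ fall out correctly. Classical Selberg majorants only yield one-sided estimates at the Dirac masses, so one must combine them with dilated Dirichlet or Fej\'er kernels centred at each node and carry out somewhat delicate bookkeeping for their values and derivatives at neighbouring nodes, using that the nearest neighbour lies at distance at least $\tfrac{3}{N+1} - \tfrac{3}{2(N+1)} = \tfrac{3}{2(N+1)}$. This bookkeeping, rather than any conceptual novelty, is what pins down the explicit constants appearing in Diederichs's estimate.
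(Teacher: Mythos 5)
Your proposal takes a genuinely different route from the one Diederichs uses, which the paper describes in \cref{Rem_Diederichs_1D}(5) and reproduces in its own proofs of \cref{Thm_Lipschitz_univariate,Thm_l2_lower_bound_2D,Thm_l2_lower_bound_higherD}: one fixes a \emph{single} even function $\psi$ that is compactly supported in physical space on $[-q,q]$ and whose Fourier transform changes sign at $\|v\|=N$, bounds $\sum_{|k|\leq N}|\hat\mu_1(k)-\hat\mu_2(k)|^2$ from below by extending the sum over all $k\in\Z$, and applies Poisson summation. The whole point of the compact support is that the resulting quadratic form $\sum_{t,t'}\tilde c_t\overline{\tilde c_{t'}}\,\psi(t-t')$ decomposes into $1\times1$ and $2\times2$ blocks indexed by close node pairs, with cross-block terms vanishing \emph{identically}. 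You instead pair $\hat\mu_1-\hat\mu_2$ node by node against degree-$N$ trigonometric polynomials $K_t,P_t,Q_t$ and propose to control the cross-node terms by decay.

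This is where the gap lies. A nonzero trigonometric polynomial of degree $N$ cannot vanish on an interval, so the tail $\sum_{s\neq t}\tilde c_s K_t(s)$ must be estimated rather than annihilated. But the class $\mathcal{M}_{c_{\min}}$ only imposes the lower bound $|c_t|\ge c_{\min}$ together with $\sum_t c_t=1$, with complex weights; these impose \emph{no} upper bound on $\sum_t|c_t|$ (take $c_1=R$, $c_2=1-R$ with $R\to\infty$). Hence $\sum_{s\neq t}|\tilde c_s|\,|K_t(s)|$ cannot be made ``negligible'' by any uniform decay of $K_t$; your calibration step that ``$\mu_2$ must place a node of weight $\geq c_{\min}$ in the ball'' collapses if a far-away $\mu_2$-node carries a huge weight, and the same obstruction blocks the quantitative bookkeeping for $P_t,Q_t$. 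A Bessel-type almost-orthogonality inequality for $\{K_t\}$ controls $\sum_t|\langle\hat\mu_1-\hat\mu_2,\widehat{K_t}\rangle|^2$ by $\|\hat\mu_1-\hat\mu_2\|_2^2$, but it does not separate the diagonal $(s\in\{t,\eta(t)\})$ contribution from the cross-node tail, which is what you actually need. The Poisson/compact-support mechanism is thus not merely a convenience: it is precisely what makes Diederichs's argument insensitive to the magnitude of the weights. To salvage a node-by-node polynomial-pairing strategy one would need an a priori bound on $\sum_t|c_t|$, which the theorem does not assume.
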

The methods which we will develop for the case of $d>1$ later and which are also based on Diederichs approach are applicable to the univariate situation as well. We obtain the following theorem with the theoretical background of \cref{subsec_bivariate}.
\begin{Thm}[Univariate Lipschitz] \label{Thm_Lipschitz_univariate}
Let $\hat{\mu}_1,\hat{\mu}_2\in \widehat{\mathcal{M}}_{c_{\min}}^N(\frac{2\kappa}{N})$ for some $\kappa>1$ and assume that
\begin{align}
    \sum_{k=-N}^N |\hat{\mu}_1(k)-\hat{\mu}_2(k)|^2 < \frac{3\kappa^2-1}{2\kappa^3} N c_{\min}^2. \label{eq_cond_univ}
\end{align}
Then, we have $|Y^{\hat{\mu}_1}|=|Y^{\hat{\mu}_2}|$ and for every $t\in Y^{\hat{\mu}_1}$ there is a unique $t'=:\eta(t)\in Y^{\hat{\mu}_2}$ with $\|t-\eta(t)\|_{\T}<\frac{\kappa}{N}$. Furthermore, the difference in the moments is related to the difference in the nodes and weights via 
\begin{align*}
    \sum_{k=-N}^N |\hat{\mu}_1(k)-\hat{\mu}_2(k)|^2 \geq \frac{15}{4} \frac{\kappa^2-1}{\kappa^5}N^3 c_{\min}^2  \sum_{t\in Y^{\hat{\mu}_1}} \|t-\eta(t)\|_\T^2 + \frac{1}{4} \frac{3\kappa^2-1}{2\kappa^3} N \sum_{t\in Y^{\hat{\mu}_1}} \left|c_t^{(1)}-c_{\eta(t)}^{(2)}\right|^2.
\end{align*}
For $\kappa^2\geq \frac{13}{9}$, one can improve the statement and we have that $\|t-\eta(t)\|_{\T}<\frac{\kappa}{2N}$ for all $t\in Y^{\hat{\mu}_1}$ as well as
\begin{align}
    \sum_{k=-N}^N |\hat{\mu}_1(k)-\hat{\mu}_2(k)|^2 \geq 10 \frac{\kappa^2-1}{\kappa^5} N^3 c_{\min}^2 \sum_{t\in Y^{\hat{\mu}_1}} \|t-\eta(t)\|_\T^2 + \frac{\kappa^2+1}{4\kappa^3} N \sum_{t\in Y^{\hat{\mu}_1}} |c_t^{(1)}-c_{\eta(t)}^{(2)}|^2 \label{eq_Lip_uni}
\end{align}
where the term with the difference of the nodes becomes maximal in $\kappa$ for $\kappa=\sqrt{\frac{5}{3}}\approx 1.291$.
\end{Thm}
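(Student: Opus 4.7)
The plan is to treat the univariate statement as a specialization of the localization technology developed for general $d$ in the bivariate subsection, using the Beurling--Selberg/Vaaler philosophy of building a nonnegative trigonometric polynomial $G$ of degree $\leq N$ that peaks at a chosen node and is controllably small at every other node. Writing $\sigma:=\mu_1-\mu_2$, the left-hand side of the claim is the Parseval sum $\|\hat{\sigma}\|_2^2$, and the whole task is to bound this from below by the two quantities $\sum_t |c_t^{(1)}-c_{\eta(t)}^{(2)}|^2$ and $\sum_t \|t-\eta(t)\|_{\T}^2$. The two inputs to the construction are the degree budget $N$ and the separation $\frac{2\kappa}{N}$, and the free parameter $\kappa>1$ is exactly what will let us tune the constants in the final estimate (sharper than the fixed $\frac{3}{N+1}$ of \cref{Prop_lower_bound}).

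First I would establish matching and equal cardinality. Pick $t\in Y^{\hat{\mu}_1}$ and test with a polynomial $G$ peaked at $t$: the identity $\int G\,d\sigma=\langle \hat{G},\hat{\sigma}\rangle$ together with Cauchy--Schwarz in Parseval gives an inequality in which $c_t^{(1)}\cdot G(t)$ is the dominant term on the $\mu_1$-side; all remaining contributions are bounded via $G$ being small away from $t$, while the separation $\frac{2\kappa}{N}$ limits how many nodes can cluster near $t$. Under the hypothesis \eqref{eq_cond_univ} on $\|\hat\sigma\|_2^2$, these contributions cannot cancel $c_{\min}\, G(t)$ unless some node of $\mu_2$ lies within $\kappa/N$ of $t$. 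Symmetrising in $(\mu_1,\mu_2)$ yields a well-defined bijection $\eta$ and hence $|Y^{\hat{\mu}_1}|=|Y^{\hat{\mu}_2}|$.

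Second, for the quantitative lower bound I would decompose each term $c_t^{(1)}G(t)-c_{\eta(t)}^{(2)}G(\eta(t))$ as $(c_t^{(1)}-c_{\eta(t)}^{(2)})G(t)+c_{\eta(t)}^{(2)}(G(t)-G(\eta(t)))$ and Taylor-expand the second piece to second order around $t$, using the just-proved bound $\|t-\eta(t)\|_{\T}<\kappa/N$ to control the remainder. The pointwise values $G(t)$, $G'(t)$, and the $L^2$-norm of $\hat G$, evaluated on a concrete $\kappa$-parameterised family (a Fejér-/Jackson-type kernel of degree $N$ with half-width tied to $\kappa/N$), produce the explicit prefactors $\tfrac{1}{4}\cdot\frac{3\kappa^2-1}{2\kappa^3}N$ for the weight contribution and $\frac{15(\kappa^2-1)}{4\kappa^5}N^3 c_{\min}^2$ for the node contribution. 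Polarising and summing over $t\in Y^{\hat{\mu}_1}$ gives the first claimed lower bound.

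The improvement under $\kappa^2\geq \tfrac{13}{9}$ comes from re-running the matching step with the sharper Taylor remainder that is available once $G(t)$ dominates by a wider margin, which upgrades $\|t-\eta(t)\|_{\T}<\kappa/N$ to $\|t-\eta(t)\|_{\T}<\kappa/(2N)$; substituting this tighter bound back into the decomposition, and rebalancing the two terms, replaces the coefficients by $10(\kappa^2-1)/\kappa^5$ and $(\kappa^2+1)/(4\kappa^3)$, after which the optimum $\kappa=\sqrt{5/3}$ is a one-variable calculus exercise. The main obstacle is the explicit constant chase: the localizing polynomial and the split of the moment error into a weight part and a node part must be tuned precisely enough that each rational function of $\kappa$ comes out sharp, and no slack is lost in the Cauchy--Schwarz and Taylor steps.
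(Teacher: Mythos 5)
Your proposal gestures at the right family of tools (Beurling--Selberg localizers, separation $q=\kappa/N$ tied to the degree budget $N$) but it is \emph{not} the proof the paper runs, and two of its central steps have genuine gaps.

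First, the paper's univariate argument is a verbatim specialization of the multivariate quadratic-form argument: one fixes a \emph{single} compactly supported function $\psi(x)=((2\pi N)^2+\tfrac{\partial^2}{\partial x^2})(\varphi*\varphi)(x)$ with $\varphi$ the Hann window \eqref{eq_cos_squared} and $q=\kappa/N$, and exploits the extremal property of $\hat\psi$ (nonnegative on $\{|k|\le N\}$, nonpositive outside, maximal at zero) through Poisson summation to arrive at
$\hat\psi(0)\sum_{|k|\le N}|\hat\sigma(k)|^2\ge \sum_{t,t'}\tilde c_t\overline{\tilde c_{t'}}\sum_\ell\psi(t-t'+\ell)$,
followed by an exact eigendecomposition of the $2\times2$ block $\mathbf A_t$ into $(\psi(0)-\psi(|t-\eta(t)|))\|\mathbf c_t\|_2^2 + \psi(|t-\eta(t)|)|c_t^{(1)}-c_{\eta(t)}^{(2)}|^2$. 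Your plan instead tests the measure against a per-node trigonometric polynomial $G_t$ and applies Cauchy--Schwarz. That only yields the scalar bound $|\int G_t\,d\sigma|\le \|\hat G_t\|_2\|\hat\sigma\|_2$ for each $t$; there is no mechanism in your write-up to aggregate these into a lower bound on $\|\hat\sigma\|_2^2$ by a \emph{sum} of squares over $t$. Absent an orthogonality statement for the family $(G_t)$, ``polarising and summing'' is not a valid inference. The paper sidesteps this entirely because the quadratic-form identity delivers the full double sum at once.

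Second, your decomposition $c_t^{(1)}G(t)-c_{\eta(t)}^{(2)}G(\eta(t))=(c_t^{(1)}-c_{\eta(t)}^{(2)})G(t)+c_{\eta(t)}^{(2)}(G(t)-G(\eta(t)))$ followed by Taylor expansion does not immediately separate into two nonnegative contributions; the two pieces can partially cancel, so one cannot pass from a bound on a modulus $|A+B|$ to a lower bound of the form $c_1|A|^2+c_2|B|^2$. This is precisely what the spectral decomposition of the symmetric matrix $\mathbf A_t$ buys: it splits the quadratic form into two manifestly nonnegative rank-one pieces, which are then bounded using the explicit estimates $\psi(0)-\psi(x)\ge 5\pi^2\frac{\kappa^2-1}{q^3}|x|^2$ on $[0,q/2]$ and the linear bound on $[q/2,q]$, together with $\psi(q/2)=\frac{\pi^2}{4q}(\kappa^2+1)$. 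These pointwise estimates (proved in \cref{Lem_phi_explicit} and \cref{Lem_bound_conv}) are where every explicit constant — $\frac{3\kappa^2-1}{2\kappa^3}$, $\frac{15(\kappa^2-1)}{4\kappa^5}$, $10\frac{\kappa^2-1}{\kappa^5}$, $\frac{\kappa^2+1}{4\kappa^3}$ — actually comes from; your proposal never constructs the localizer nor the decay bounds, so asserting that the constants ``come out'' has no support. Finally, the improvement for $\kappa^2\ge 13/9$ is not a ``sharper Taylor remainder'': it is a genuine bootstrap where one feeds the linear decay bound on $[q/2,q]$ back through \eqref{eq_cond_univ} to push $\|t-\eta(t)\|_\T$ below $q/2$, after which the quadratic branch of the decay estimate applies throughout.
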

\begin{proof}
See appendix.
\end{proof}
\begin{Rem} \label{Rem_Diederichs_1D}
\begin{itemize}
    \item [1)] Diederichs (cf.\,\cite[p.\,11]{Diederichs_19}) remarks that the minimal separation can be decreased to the condition $\frac{2}{N+1}$ yielding worse rates in $N$ for the estimate \eqref{eq_Diederichs}. In the proof of \cref{Thm_Lipschitz_univariate}, we find that the same worse rate in $N$ can be observed by our construction if $\kappa=1$. Moreover, Diederichs gives an example that a similar result with a lower bound for the separation of the node sets of $\hat{\mu}_1,\hat{\mu}_2$ strictly smaller than $\frac{2}{N+1}$ is not possible.
    \item [2)] We compare the conditions and results of \cref{Prop_lower_bound} and \cref{Thm_Lipschitz_univariate} in \cref{table_univariate_comparison} and observe that \cref{Thm_Lipschitz_univariate} has weaker conditions of the separation of the measure but stronger conditions on the difference of the moment vectors $\hat{\mu}_1$ and $\hat{\mu}_2$. Taking the optimal $\kappa$ for the constant appearing in the term with the difference of the nodes, gives a result which is better by a factor roughly ten for this constant and almost the same constant in front of $\sum_{t\in Y^{\hat{\mu}_1}} |c_t^{(1)}-c_{\eta(t)}^{(2)}|^2$. All terms have the same asymptotic dependency on $N$ and $c_{\min}$.
    \begin{table}[ht]
        \centering
        \begin{tabular}{m{2cm}|p{2.5cm}|p{1.95cm}|p{3.1cm}|p{2.5cm}}
             & Separation & $\|\hat{\mu}_1-\hat{\mu}_2\|_2^2<$ & 
             \multicolumn{2}{c}{Constant in front of} \\
             & of $\mu_1$ and $\mu_2$ & & $\sum\|t-\eta(t)\|_\T^2$ & $\sum |c_t^{(1)}-c_{\eta(t)}^{(2)}|^2$\\\hline
             \rule{0pt}{0.6cm} \cref{Prop_lower_bound} (Diederichs) & $\geq \frac{3}{N+1}$ & $\frac{4}{3} (N+1) c_{\min}^2$  & \parbox[]{3cm}{$\frac{4\pi^2}{3^5} (N+1)^3 c_{\min}^2$ \\ $\approx 0.16 \cdot (N+1)^3 c_{\min}^2$} & $\frac{N+1}{3}$ \\\hline
             \rule{0pt}{0.6cm} \cref{Thm_Lipschitz_univariate} \vspace{0.005cm}& \parbox[]{1.6cm}{$\geq\frac{2\kappa}{N}$ with \\ $\kappa>1$} & $\frac{3\kappa^2-1}{2\kappa^3} N c_{\min}^2$ & $\frac{15}{4} \frac{\kappa^2-1}{\kappa^5}N^3 c_{\min}^2$ & $\frac{1}{4} \frac{3\kappa^2-1}{2\kappa^3} N $ \\\hline
             \rule{0pt}{1.2cm} & \parbox[]{2.5cm}{$\geq\frac{2\kappa}{N}$ with \\ $\kappa\geq\sqrt{\frac{13}{9}}\approx 1.20$} & $\frac{3\kappa^2-1}{2\kappa^3} N c_{\min}^2$ & $10\frac{\kappa^2-1}{\kappa^5}N^3 c_{\min}^2$ & $\frac{\kappa^2+1}{4\kappa^3}  N $ \\\hline
             \rule{0pt}{1.2cm} & \parbox[]{2.5cm}{$\geq\frac{2\kappa}{N}$ with \\ $\kappa=\sqrt{\frac{5}{3}}\approx 1.29$} & $\frac{6}{5} \sqrt{\frac{3}{5}} N c_{\min}^2$ & \parbox[]{3cm}{$\frac{12}{5}\sqrt{\frac{3}{5}} N^3 c_{\min}^2$ \\ $\approx 1.86\cdot N^3 c_{\min}^2$}  & \parbox[]{2.4cm}{$\frac{2}{5}\sqrt{\frac{3}{5}}  N $ \\ $\approx 0.31\cdot N$} \\\hline
        \end{tabular}
        \caption{Comparison of the constants appearing in \cref{Prop_lower_bound} and \cref{Thm_Lipschitz_univariate}.}
        \label{table_univariate_comparison}
    \end{table}
    \item [3)] Rearranging \eqref{eq_Lip_uni} in \cref{Thm_Lipschitz_univariate}, we find for $\kappa=\sqrt{\frac{5}{3}}$
    \begin{align}
        \sum_{t\in Y^{\hat{\mu}_1}} |c_t-c_{\eta(t)}|^2+ N^2 \sum_{t\in Y^{\hat{\mu}_1}} \|t-\eta(t)\|_\T^2 \leq \max\left(\frac{5}{12\cdot c_{\min}^2},\frac{5}{2}\right) \sqrt{\frac{5}{3}}\frac{\|\hat{\mu}_1-\hat{\mu}_2\|_2^2}{N+1}  . \label{eq_Lipschitz_local}
    \end{align}
    This can be understood as a local Lipschitz property of the Prony-map $\mathscr{P}$ since the estimate holds for all $\hat{\mu}_1,\hat{\mu}_2\in \widehat{\mathcal{M}}_{c_{\min}}^N(\frac{2\kappa}{N})$ and a distance bounded in \eqref{eq_cond_univ}.\footnote{Motivated by the left hand side of estimates like \eqref{eq_Lipschitz_local}, we might view this as a weighted $\ell^2$-norm on the parameter set even if the expression depends on $|Y^{\hat{\mu}_1}|$. For the local results, we refuse to define a proper metric for the parameter set but study estimates like \eqref{eq_Lipschitz_local}.} Unsurprisingly, the problem becomes easier and hence the Lipschitz constant smaller if one has more samples $N$ and a larger lower bound for the weights.
    \item [4)] \Cref{Thm_Lipschitz_univariate} enables a comparison with \cref{Rem_Batenkov_2020}. Trivially, we have
    \begin{align*}
        \|\hat{\mu}_1(k)-\hat{\mu}_2(k)\|_2^2 \leq (2N+1) \|\hat{\mu}_1-\hat{\mu}_2\|_{\infty}^2 
    \end{align*}
    and 
    \begin{align*}
        \sum_{t\in Y^{\hat{\mu}_1}} \|t-\eta(t)\|_\T^2 \geq \md(\supp \mathscr{P}(\hat{\mu}_1),\supp \mathscr{P}(\hat{\mu}_2))^2.
    \end{align*}
    Applying this to \eqref{eq_Lipschitz_local}, we end up with
    \begin{align}
        \md(\supp \mathscr{P}(\hat{\mu}_1),\supp \mathscr{P}(\hat{\mu}_2)) &\leq \sqrt{\max\left(\frac{5}{6\cdot c_{\min}^2},5\right)}\left(\frac{5}{3}\right)^{1/4}\frac{\|\hat{\mu}_1-\hat{\mu}_2\|_{\infty}}{N}.   
        %\approx  \frac{3.51}{c_{\min} (N+1)}  \|\hat{\mu}_1-\hat{\mu}_2\|_{\infty}. 
        \label{eq_compare_to_Batenkov}
    \end{align}
    This reveals the same order in $N$ as in \eqref{eq_Batenkov_2020} and indicates some sense of optimality in the orders of Diederichs and our result. Moreover, the two presented theorems contain explicit bounds on the difference of the moments vectors guaranteeing their validity and enable to specify a reasonable estimate for the arising Lipschitz constant which is both not provided explicitly by \cite{Batenkov_2020}. 
    \item [5)] The proof of \cref{Prop_lower_bound} relies on the properties of a suitable localising function and on Poisson's summation formula. We construct this localising function in the next subsection.
\end{itemize}
\end{Rem}

\subsection{Local bivariate results}\label{subsec_bivariate}
The task in higher dimensions is to recover the node set $Y\subset \T^d,\,d\in\N,\,d\geq 1$ and weights $(c_t)_{t\in Y}\in \C^{|Y|}$ of an exponential sum
\begin{align*}
    \hat{\mu}(k)=\sum_{t\in Y} c_t \eim{t\cdot k}
\end{align*}
from its evaluations on a sampling set $\mathcal{B}\subset \Z^d$. Analogously to the univariate situation, we define the space of $q$-separated probability-like measures
\begin{align*}
    \mathcal{M}_{c_{\min}}^d(q) := \left\{\sum_{t\in Y} c_t \delta_{t},\,c_t\in\C,\,|c_t|\geq c_{\min},\,\sum_{t\in Y} c_t =1,\,Y\subset \T^d \text{ finite},\,\sep\, Y\geq q\right\} \subset \mathcal{M}(\T^d)
\end{align*}
and its corresponding moment space
\begin{align*}
    \widehat{\mathcal{M}}_{c_{\min}}^{N,d}(q) := \left\{\left(\sum_{t\in Y} c_t \eim{t k}\right)_{k\in \mathcal{B}},\, \sum_{t\in Y} c_t \delta_{t}\in \mathcal{M}_{c_{\min}}^d(q) \right\} \hspace{-0.05cm}\subset \C^{\mathcal{B}} 
\end{align*}
where 
\begin{align*}
    \sep\,Y:= \min_{t_1,t_2\in Y, t_1\neq t_2} \|t_1-t_2\|_{\T^d} = \min_{t_1,t_2\in Y, t_1\neq t_2} \min_{j\in\Z^d} \|t_1-t_2+j\|_{\infty}
\end{align*}
is the minimal separation of the set $Y$ and $\C^{\mathcal{B}}:=\left\{\left(z_k\right)_{k\in\mathcal{B}}: z_k\in\C\right\}$. As in the univariate case, one can apply techniques including the usage of a suitable localisation function and of Poisson's theorem. But naturally, it is crucial to develop ideas for the construction of suitable localising functions depending on the type of the set $\mathcal{B}$. Restricting the considered sampling sets $\mathcal{B}$ to $\ell^p$-balls in $\Z^d$ for $p\in[2,\infty)\cup \{\infty\}$, we have the following definition for the multivariate Prony-map.
\begin{Def}[Multivariate Prony-map]
Let $N\in \N$, $d\geq 2$ and $p\in[2,\infty)\cup \{\infty\}$. Assume we have moments $\hat{\mu}(k)$ for $k$ in the sampling set $\mathcal{B}=\{k\in\Z^d: \|k\|_p\leq N\}$. Then, the \textit{multivariate Prony-map} $\mathscr{P}$ is
\begin{align*}
    \mathscr{P}: \quad \widehat{\mathcal{M}}^{N,d}_{c_{\min}}\left(\frac{2\sqrt{d}}{N}\right) \to \mathcal{M}\left(\T^{d}\right), \quad \left(\sum_{t\in Y} c_t \eim{t k}\right)_{k\in \mathcal{B}} \mapsto \sum_{t\in Y} c_t \delta_t
\end{align*}
where $\mathcal{M}(\T^d)$ is the set of complex probability-like Borel measures on $\T^d$. 
\end{Def}
Again, the condition for the separation is motivated by our main results (\Cref{Thm_l2_lower_bound_2D} and \cref{Thm_l2_lower_bound_higherD}). To the best of our knowledge, a local Lipschitz property for the Prony-map with $d\geq 2$ has only been proven for $d=2$ and $p=\infty$: In \cite{Diederichs_18} the case $\mathcal{B}=[-N,N]^2$ is addressed.
\begin{Thm}  \textup{(cf.\,\cite[Thm.\,2.31.]{Diederichs_18})} \label{Thm_Diederichs_2D}Let $\hat{\mu}_1,\hat{\mu}_2\in \widehat{\mathcal{M}}^{N,2}_{c_{\min}}(\frac{2}{N+1})$ and assume
\begin{align*}
    \|\hat{\mu}_1-\hat{\mu}_2\|_2^2:=\sum_{\genfrac{}{}{0pt}{}{\|k\|_{\infty}\leq N}{k\in\Z^2}} |\hat{\mu}_1(k)-\hat{\mu}_2(k)|^2 < \frac{5}{4} (N+1)^2 c_{\min}^2.
\end{align*}
Then the following holds:
\begin{itemize}
    \item [(i)] For every node $t\in Y^{\hat{\mu}_1}$ there is exactly one $t'=\eta(t)\in Y^{\hat{\mu}_2}$ with $\|t-\eta(t)\|_{\T^2}<\frac{1}{2(N+1)}$ and vice versa.
    \item [(ii)] The exponential sums satisfy
    \begin{align*}
        \|\hat{\mu}_1-\hat{\mu}_2\|_2^2 \geq \frac{15}{16} (N+1)^4 \sum_{t\in Y^{\hat{\mu}_1}} \left(|c_t^{\hat{\mu}_1}|^2+|c_t^{\hat{\mu}_2}|^2\right) \|t-\eta(t)\|_{\T^2}^2 + \frac{3(N+1)^2}{4} \sum_{t\in Y^{\hat{\mu}_1}} |c_t^{\hat{\mu}_1}-c_t^{\hat{\mu}_2}|^2.
    \end{align*}
\end{itemize}
\end{Thm}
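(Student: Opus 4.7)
My plan is to mimic the univariate strategy behind \cref{Prop_lower_bound}: build a bivariate \emph{localising function} on the sampling cube and combine Parseval's identity with Poisson summation. Because $\mathcal{B}=\{k\in\Z^2:\|k\|_\infty\leq N\}$ has tensor-product structure, the natural choice is $F(x_1,x_2):=f(x_1)f(x_2)$, where $f$ is the univariate Beurling--Selberg-type minorant used by Diederichs with Fourier support in $[-N,N]$. After periodisation, $F$ is a trigonometric polynomial on $\T^2$ with frequencies in $\mathcal{B}$, sharply concentrated around $0$, with explicit control on $F(0)$, $\|F\|_{L^2(\T^2)}^2$, and on $\nabla^2 F(0)$.

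Introduce the difference polynomial
\begin{equation*}
p(x):=\sum_{k\in\mathcal{B}}\bigl(\hat\mu_1(k)-\hat\mu_2(k)\bigr)\eip{k\cdot x},
\end{equation*}
so that $\|\hat\mu_1-\hat\mu_2\|_2^2=\|p\|_{L^2(\T^2)}^2$ by Parseval. Viewing $p$ as a signed combination of exponentials supported on $Y^{\hat\mu_1}\cup Y^{\hat\mu_2}$, the central identity, obtained by expanding $\int F(x-y)p(x)\overline{p(y)}\,\diff x\,\diff y$ in two ways, rewrites a suitable weighted $\ell^2$ norm of the coefficients $\hat\mu_1-\hat\mu_2$ as a double sum over node pairs in which the diagonal terms are governed by the Taylor expansion of $F$ at $0$ and the off-diagonal terms decay in the pairwise separation.

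For part (i) I would argue by contradiction: if some $t\in Y^{\hat\mu_1}$ admits no partner in $Y^{\hat\mu_2}$ within $\frac{1}{2(N+1)}$, the self-contribution at $t$ produces a term of order $|c_t|^2 F(0)\asymp c_{\min}^2(N+1)^2$, and once the explicit value of $F(0)$ is plugged in, this exceeds $\frac{5}{4}(N+1)^2 c_{\min}^2$, contradicting the hypothesis. Uniqueness of $\eta(t)$ is then automatic from the separation $\sep Y^{\hat\mu_2}\geq\frac{2}{N+1}$, since two candidates within $\frac{1}{2(N+1)}$ of $t$ would be within $\frac{1}{N+1}<\frac{2}{N+1}$ of each other; the equality $|Y^{\hat\mu_1}|=|Y^{\hat\mu_2}|$ follows by symmetry after swapping the roles of $\hat\mu_1,\hat\mu_2$.

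For part (ii) I would Taylor-expand each diagonal contribution around $0$: the zeroth-order piece produces a term proportional to $\|F\|_{L^2(\T^2)}^2\sum_{t}|c_t^{\hat\mu_1}-c_t^{\hat\mu_2}|^2$, while the second-order piece produces $\operatorname{tr}(\nabla^2 F(0))\sum_t\bigl(|c_t^{\hat\mu_1}|^2+|c_t^{\hat\mu_2}|^2\bigr)\|t-\eta(t)\|_{\T^2}^2$. Inserting the explicit values $\|F\|_{L^2}^2\propto (N+1)^2$ and $\nabla^2 F(0)\propto (N+1)^4$ for the tensor-product minorant would yield the asserted constants $\tfrac{3}{4}(N+1)^2$ and $\tfrac{15}{16}(N+1)^4$. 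The main obstacle is the cross-term bookkeeping: the off-diagonal part of the double sum over distinct node pairs, where nodes from $Y^{\hat\mu_1}$ and $Y^{\hat\mu_2}$ enter with opposite signs, must be absorbed without spoiling the diagonal constants. Here the tensor-product structure is decisive, since each cross term factorises into univariate kernels whose decay away from $0$ is already controlled by Diederichs' univariate analysis, and the separation hypothesis $\frac{2}{N+1}$ is precisely what makes the absorption succeed.
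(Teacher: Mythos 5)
The central gap is your choice of localising function. A pure tensor product $F=f\otimes f$ of univariate Beurling--Selberg-type minorants does not inherit the one-sided property on which the whole argument rests: a univariate minorant is necessarily negative somewhere, the product of two negative values is positive, and so $f\otimes f$ is neither a minorant of the corresponding bivariate object nor does it have the sign structure (nonpositivity where it must be discarded, maximality at $0$) that lets you drop out-of-band frequencies and off-diagonal node interactions with an inequality rather than an estimate. This is precisely why Diederichs does \emph{not} use the plain tensor product but adds a correction term to it (as remarked in the text right after the theorem), and why for the $\ell^2$-ball we work instead with a function $\psi$ that is \emph{compactly supported in space} with sign-controlled Fourier transform (\cref{Prop_psi_generell}). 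Your $F$ is band-limited, hence cannot be compactly supported, so the cross terms between distinct node pairs never vanish; in the compact-support setup they vanish identically after Poisson summation because the support radius stays below the separation, and this is exactly what makes explicit constants attainable. Your proposal defers this point (``absorbed without spoiling the diagonal constants''), but that absorption is the actual difficulty: nodes of $\mu_1$ and $\mu_2$ belonging to different pairs can be as close as about $\frac{3}{2(N+1)}$, and at distances of order $\frac{1}{N+1}$ a kernel band-limited to $[-N,N]^2$ has barely decayed, so these interactions are comparable in size to the diagonal terms and cannot be waved away by ``decay''.

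Two further steps would fail as written. First, a second-order Taylor expansion of $F$ at $0$ is a purely local statement; to obtain the stated coefficient of $\sum_t\bigl(|c_t^{\hat\mu_1}|^2+|c_t^{\hat\mu_2}|^2\bigr)\|t-\eta(t)\|_{\T^2}^2$ one needs a genuine lower bound for $F(0)-F(x)$ on the whole admissible range of displacements (compare \cref{Lem_bound_conv,Lem_lower_bound_for_difference}), and one must diagonalise the $2\times 2$ pair matrices to split the bound into the $\|\mathbf{c}_t\|_2^2$-part and the $|c_t^{\hat\mu_1}-c_t^{\hat\mu_2}|^2$-part, which your sketch does not do. Second, the radius $\frac{1}{2(N+1)}$ in (i) -- a quarter of the separation -- does not follow from the crude self-term contradiction, which at best places $\eta(t)$ within the concentration radius of $F$; in our analogous results this refinement requires an additional bootstrapping argument (\cref{Lem_sequence_of_bounds}). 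Without the corrected localising function and without these quantitative steps, neither the constants $\frac{15}{16}(N+1)^4$ and $\frac{3}{4}(N+1)^2$ nor the matching radius can be recovered.
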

Hence, we conclude that the twodimensional Prony-map for rectangular samples satisfies a local Lipschitz property similar to the one-dimensional case. In the proof of the previous theorem, the tensor product of univariate minorising functions is added to a correction term in order to generate a bivariate localising function. But this is not only a difficult approach when we consider higher dimensions $d>2$ but also we might be interested in different sampling sets $\mathcal{B}$. For example, we might be given the samples on a $\ell^2$-ball in the case of an approximately radially symmetric problem like deconvolution for an optical system with a radial point spread function. As this was our underlying motivation, we try to obtain a result similar to \cref{Thm_Diederichs_2D} and using a construction for the localising function from \cite{Kunis_17}. Our final result is the following.
\begin{Thm} \label{Thm_l2_lower_bound_2D}
Let $\hat{\mu}_1,\hat{\mu}_2\in \widehat{\mathcal{M}}^{N,2}_{c_{\min}}(2\frac{\sqrt{2}}{N})$ and assume that $\hat{\mu}_1,\hat{\mu}_2$ satisfy
\begin{align} \label{eq_error_small}
    \sum_{\genfrac{}{}{0pt}{}{k\in\Z^2}{\|k\|_2\leq N}} |\hat{\mu}_1(k)-\hat{\mu}_2(k)|^2 < \frac{3}{4} N^2 c_{\min}^2.
\end{align}
Then for every $t\in Y^{\hat{\mu}_1}$ there is exactly one $t'=\eta(t)\in Y^{\hat{\mu}_2}$ with $\|t-\eta(t)\|_{\T^2}\leq \frac{1}{\sqrt{2}N}$ and vice versa. Moreover, we have the following estimate for the difference in the nodes and weights:
\begin{align*}
    \sum_{\genfrac{}{}{0pt}{}{k\in\Z^2}{\|k\|_2\leq N}} |\hat{\mu}_1(k)-\hat{\mu}_2(k)|^2 \geq \frac{5}{4} N^4 c_{\min}^2 \sum_{t\in Y^{\hat{\mu}_1}} \|t-\eta(t)\|_{\T^2}^2 + \frac{1}{16} N^2 \sum_{t\in Y^{\hat{\mu}_1}} |c_t^{(1)}-c_{\eta(t)}^{(2)}|^2.
\end{align*}
\end{Thm}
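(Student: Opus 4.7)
The plan is to follow the localising-function strategy of Beurling--Selberg type used by Diederichs for the square sampling set, but adapted to the $\ell^2$-disk using a radial construction in the spirit of \cite{Kunis_17}. The first step is to exhibit a non-negative trigonometric polynomial $F\colon\T^2\to\R$ whose frequencies lie in $\mathcal{B}=\{k\in\Z^2:\|k\|_2\le N\}$, with a pronounced central peak $F(0)$ of order $N^2$, with a Taylor-type lower bound $F(x)\ge F(0)\bigl(1-cN^2\|x\|_{\T^2}^2\bigr)$ near the origin for an explicit $c$, and with enough decay so that summing $|F|$ over any $(2\sqrt{2}/N)$-separated point set away from $0$ yields only a small fraction of $F(0)$. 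A natural candidate is $F=|G|^2$ where $G$ has Fourier support in the disk of radius $N/2$ (so that $\hat F$ is supported in $\mathcal{B}$ and $F$ is automatically non-negative), tuned so that the first off-origin lobes are dominated at the separation radius.

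With $F$ in hand, the proof proceeds in two stages. \emph{Stage one} establishes the bijective matching $\eta$. For each $t^{\ast}\in Y^{\hat\mu_1}$, testing the shifted polynomial $F(\,\cdot-t^{\ast})$ against the difference measure and invoking Plancherel on $\mathcal{B}$ yields
\[
\sum_{t\in Y^{\hat\mu_1}}\! c_t^{(1)}F(t-t^{\ast})\;-\;\sum_{t'\in Y^{\hat\mu_2}}\! c_{t'}^{(2)}F(t'-t^{\ast})\;=\;\sum_{k\in\mathcal{B}}\hat F(k)\,\bigl(\hat\mu_1(k)-\hat\mu_2(k)\bigr)\eip{k\cdot t^{\ast}}.
\]
If no node of $\mu_2$ lay within $1/(\sqrt{2}N)$ of $t^{\ast}$, the pointwise properties of $F$ combined with the separation of $\mu_1,\mu_2$ force the left-hand side to be at least on the order of $F(0)c_{\min}\sim N^2c_{\min}$, while Cauchy--Schwarz bounds the right-hand side above by $\|\hat F\|_2\,\|\hat\mu_1-\hat\mu_2\|_2$. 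The hypothesis \eqref{eq_error_small} then delivers a contradiction, producing the unique matched node $\eta(t^{\ast})$; symmetry yields $|Y^{\hat\mu_1}|=|Y^{\hat\mu_2}|$.

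\emph{Stage two} is the quantitative lower bound. Writing
\[
\hat\mu_1(k)-\hat\mu_2(k)=\sum_{t\in Y^{\hat\mu_1}}\Bigl[\bigl(c_t^{(1)}-c_{\eta(t)}^{(2)}\bigr)\eim{k\cdot t}+c_{\eta(t)}^{(2)}\bigl(\eim{k\cdot t}-\eim{k\cdot\eta(t)}\bigr)\Bigr],
\]
I evaluate the positive-definite quadratic form induced by $F$ against this decomposition. The Taylor lower bound on $F$ at the origin, applied to the diagonal terms, gives a contribution proportional to $N^2 F(0)\sum_t|c_{\eta(t)}^{(2)}|^2\|t-\eta(t)\|_{\T^2}^2$, and after using $|c_{\eta(t)}^{(2)}|\ge c_{\min}$ this produces the prefactor $\tfrac{5}{4}N^4 c_{\min}^2$; the remaining diagonal $F(0)\sum|c_t^{(1)}-c_{\eta(t)}^{(2)}|^2$ yields $\tfrac{1}{16}N^2$. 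The separation $\sep\ge 2\sqrt{2}/N$ ensures that the off-diagonal cross-terms between distinct matched pairs are absorbed into the margin provided by the decay of $F$.

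The main obstacle is the first step: pinning down a single scalar localiser on the disk whose central Taylor profile, off-origin suppression, and explicit numerical constants all align to yield precisely the stated bounds $\tfrac{3}{4}N^2c_{\min}^2$, $\tfrac{5}{4}N^4c_{\min}^2$ and $\tfrac{1}{16}N^2$. The tensor-product construction of \cite{Diederichs_18} cannot be used directly because its frequency support is a square rather than the disk, so the radially symmetric convolution construction from \cite{Kunis_17} --- modified to give a quadratic profile rather than merely positivity at the origin --- is the natural starting point. The matching radius $1/(\sqrt{2}N)$ will appear as the radius at which the Taylor approximation of $F/F(0)$ first drops below a prescribed threshold, and $2\sqrt{2}/N$ is the smallest separation for which the off-diagonal $F$-sums stay below the room available in \eqref{eq_error_small}.
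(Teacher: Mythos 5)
Your sketch has the right flavour (localising function, testing against the difference measure, quadratic form over matched pairs), but as it stands there is a genuine gap, and the gap sits exactly where you place it yourself: the localiser is never constructed, and with the construction you propose the quantitative claims would be very hard to reach. You take $F$ \emph{band-limited} to the disk and nonnegative in space (e.g.\ $F=|G|^2$), so the bilinear form $\sum_{t,t'}\tilde c_t\overline{\tilde c_{t'}}F(t-t')$ has off-diagonal entries between \emph{all} pairs of nodes that only decay polynomially (for a radial 2D Fej\'er-type kernel, like $\|x\|^{-3}$), and these must be absorbed by a counting argument over a set that is only $2\sqrt{2}/N$-separated. That absorption is precisely what destroys explicit constants such as $\tfrac54 N^4c_{\min}^2$ and $\tfrac1{16}N^2$ at this small separation. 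The paper avoids the issue by going the dual way: it takes $\psi=\bigl((2\pi N)^2+\partial_1^2+\partial_2^2\bigr)(\varphi*\varphi)\otimes(\varphi*\varphi)$ with the Hann window $\varphi(x)=\cos^2(\pi x/q)$, $q=\sqrt2/N$, so that $\psi$ is \emph{compactly supported in space} on $[-q,q]^2$ while $\hat\psi\ge0$ on $\|v\|_2\le N$, $\hat\psi\le0$ outside, and $\hat\psi\le\hat\psi(0)$. Then $\hat\psi(0)\sum_{\|k\|_2\le N}|\hat\mu_1(k)-\hat\mu_2(k)|^2\ge\sum_{k\in\Z^2}\hat\psi(k)|\hat\mu_1(k)-\hat\mu_2(k)|^2$, and Poisson summation turns the right-hand side into a bilinear form that is \emph{exactly} block-diagonal: singletons give $\psi(0)|\tilde c_t|^2$ (forcing $Y_3=\emptyset$, hence the bijection), and each matched pair gives a $2\times2$ form with diagonal $\psi(0)$ and off-diagonal $\psi(|t-\eta(t)|)$ — no cross-terms between distinct pairs survive at all.

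Two further steps in your Stage two are missing or would fail as written. First, your splitting of $\hat\mu_1-\hat\mu_2$ into a weight-difference part and a node-shift part produces, within each matched pair, mixed terms of the same order as the diagonal ones; you give no mechanism (sign, cancellation, or absorption) for them, and a Cauchy--Schwarz absorption degrades the constants. The paper instead diagonalises the pair matrix exactly in the $(1,\pm1)$ eigenbasis, yielding $(\psi(0)-\psi(\delta))\|\mathbf c_t\|_2^2+\psi(\delta)|c_t^{(1)}-c_{\eta(t)}^{(2)}|^2$ with no loss, and then uses the explicit bounds $\psi(0)-\psi(x)\ge\frac{5\pi^2}{4q^2}\|x\|_\infty^2$ and $\psi(\delta)\ge\psi(q/2,q/2)=\pi^2/16$. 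Second, the matching radius $\|t-\eta(t)\|_{\T^2}\le\frac{1}{\sqrt2N}=q/2$ does not come out of a one-shot contradiction: from the hypothesis \eqref{eq_error_small} one only gets $\|t-\eta(t)\|<q$ directly, and the paper needs the iterative bootstrap of \cref{Lem_sequence_of_bounds} (a decreasing sequence $a_k\downarrow\frac12$ of radii) to reach $q/2$; this refinement is also what legitimises the lower bound $\psi(\delta)\ge\pi^2/16$ and hence the $\tfrac1{16}N^2$ prefactor. Without an explicit kernel, a treatment of the intra-pair cross-terms, and the radius bootstrap, the stated constants are not within reach of the sketch.
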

We remark that \cref{Thm_l2_lower_bound_2D} and \cref{Thm_Diederichs_2D} show the same order in $N$ and $\|t-\eta(t)\|_{\T^2}$ while the constants in \cref{Thm_Diederichs_2D} are favourable. Nevertheless, our own theorem is justified by its broader generality for the norm used for the sampling in Fourier space. Additionally, our construction allows a simple extension to arbitrary dimensions which we present in the next subsection. The rest of this subsection is intended to prove \cref{Thm_l2_lower_bound_2D} and we use the following construction.

\begin{Lem} \textup{(cf.\,\cite[Lemma 2.1]{Kunis_17})} \label{Prop_psi_generell}
Let $d, r\in\N$, $p=2r$  and $N,q>0$. Consider $\varphi:\R\to \R_{\geq 0}$ to be some continuous and even function with $\supp \varphi\subset[-q/2,q/2]$ and a $r$-th weak derivative of bounded variation. Then, we have for the function $\psi:\R^d \to \R$,
\begin{align*}
    \psi=\left((2\pi N)^p-(-1)^r \sum_{s=1}^d \frac{\partial^p}{\partial x_s^p}\right) \bigotimes_{\ell=1}^d \varphi *\varphi
\end{align*}
\begin{itemize}
    \item [(i)] $\supp \psi \subset [-q,q]^d$,
    \item [(ii)] its Fourier transform $\hat{\psi}(v):=\int_{\R^d} \psi(x) \eim{vx} dx$ obeys
    \begin{align*}
        \hat{\psi}(v)\begin{cases} \geq 0, &\quad \|v\|_p\leq N \\ \leq 0, &\quad \|v\|_p\geq N, \end{cases} 
    \end{align*}
    \item [(iii)] $\max_{v\in\R^d} \hat{\psi}(v)=\hat{\psi}(0)$
    and $\hat{\psi}\in L^1(\R^d)$.\footnote{Condition (i) and (iv) allow to apply the Poisson summation formula.}
\end{itemize}
\end{Lem}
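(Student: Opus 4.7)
The plan is to derive a closed form for $\hat\psi$ from which all three claims will follow essentially by inspection. First I would establish (i): since $\varphi$ is supported in $[-q/2,q/2]$, the convolution $\varphi*\varphi$ is supported in $[-q,q]$, so the tensor product $\bigotimes_{\ell=1}^d(\varphi*\varphi)$ is supported in $[-q,q]^d$; applying the differential operator $(2\pi N)^p - (-1)^r\sum_{s=1}^d\partial_s^p$ cannot enlarge the (distributional) support, hence $\supp\psi\subset[-q,q]^d$.

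Next I would compute the Fourier transform explicitly. Using $\widehat{\varphi*\varphi}(v_\ell)=\hat\varphi(v_\ell)^2$ together with the tensor product factorisation,
$$\widehat{\bigotimes_{\ell=1}^d(\varphi*\varphi)}(v)=\prod_{\ell=1}^d\hat\varphi(v_\ell)^2=:\Phi(v)\geq 0.$$
The critical step is tracking signs for the differential part. With the convention $\hat f(v)=\int f(x)e^{-2\pi\ii vx}dx$, the Fourier symbol of $\partial_s^p$ is $(2\pi\ii v_s)^p=(-1)^r(2\pi)^p v_s^p$ because $p=2r$; the prefactor $-(-1)^r$ cancels one of these signs, yielding
$$\hat\psi(v)=(2\pi)^p\Bigl(N^p-\sum_{s=1}^d v_s^p\Bigr)\Phi(v)=(2\pi)^p\bigl(N^p-\|v\|_p^p\bigr)\,\Phi(v),$$
where I used $v_s^p=|v_s|^p$ since $p$ is even.

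From this formula part (ii) is immediate: $\Phi\geq 0$, so $\hat\psi(v)$ carries the sign of $N^p-\|v\|_p^p$. For the maximum in (iii), since $\varphi\geq 0$ one has $|\hat\varphi(v_\ell)|\leq\hat\varphi(0)=\int\varphi$, so $\Phi(v)\leq\Phi(0)$; combined with $N^p-\|v\|_p^p\leq N^p$ this gives $\hat\psi(v)\leq(2\pi N)^p\Phi(0)=\hat\psi(0)$ on the ball $\{\|v\|_p\leq N\}$, while outside the ball $\hat\psi(v)\leq 0\leq\hat\psi(0)$ by (ii). For $\hat\psi\in L^1(\R^d)$, I would invoke the regularity hypothesis on $\varphi$: integrating by parts $r+1$ times, using that $\varphi^{(r)}$ has bounded variation so that $d\varphi^{(r)}$ is a finite measure, yields $|\hat\varphi(v)|\lesssim (1+|v|)^{-(r+1)}$, hence $\Phi(v)\lesssim\prod_\ell(1+|v_\ell|)^{-(p+2)}$. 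Since $N^p-\|v\|_p^p$ grows at most like $\sum_s|v_s|^p$, the product decays fast enough to be integrable in each coordinate and therefore on $\R^d$.

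The only real bookkeeping hurdle is the sign-tracking in the Fourier symbol: one needs the two factors of $(-1)^r$ (from $i^{2r}$ and from the operator itself) to cancel so that $\hat\psi$ inherits the clean sign structure of $N^p-\|v\|_p^p$. Once the explicit formula for $\hat\psi$ is in hand, each of (i)--(iii) reduces to an elementary observation.
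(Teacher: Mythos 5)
Your proof is correct and follows essentially the same route as the paper: deduce (i) from $\supp(\varphi*\varphi)\subset\supp\varphi+\supp\varphi$, derive the explicit formula $\hat\psi(v)=\bigl((2\pi N)^p-\sum_s(2\pi v_s)^p\bigr)\prod_\ell\hat\varphi(v_\ell)^2$, and read off (ii) and (iii) from it together with $0\le\hat\varphi(v)\le\hat\varphi(0)$ and the decay $|\hat\varphi(v)|\lesssim(1+|v|)^{-(r+1)}$. Your sign bookkeeping and $L^1$ estimate are just slightly more spelled-out versions of the paper's.
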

\begin{proof}
\begin{itemize}
    \item [(i)] The first fact follows from the well-known property $\supp(\varphi*\varphi)\subset \supp\varphi + \supp\varphi$.
    \item [(ii)] Using the correspondence of differentiation and multiplication by polynomials through the Fourier transform, one has
    \begin{align*}
        \hat{\psi}(v)=\left((2\pi N)^p - \sum_{s=1}^d (2\pi v_s)^p \right) \prod_{\ell=1}^d \left(\hat{\varphi}(v_\ell)\right)^2
    \end{align*}
    where we also applied the Fourier-convolution theorem. This leads to the second assertion.
    \item [(iii)] Nonnegativity of $\varphi$ yields 
    \begin{align*}
        |\hat{\varphi}(v)|\leq \int_{\R^d} |\varphi(x)| |\eim{vx}| dx = \int_{\R^d} \varphi(x) dx = \hat{\varphi}(0) \quad \text{for all }v\in\R^d
    \end{align*}
    and by the parity of $\varphi$ we have $\hat{\varphi}(v)\in\R$. Hence, we have $\hat{\varphi}(v)\leq \hat{\varphi}(0)$. The representation of $\hat{\psi}$ from the proof of (ii) leads to the estimate
    \begin{align*}
        \hat{\psi}(v)\leq (2\pi N)^p \prod_{\ell=1}^d \left(\hat{\varphi}(v_\ell)\right)^2 \leq (2\pi N)^p \prod_{\ell=1}^d \left(\hat{\varphi}(0)\right)^2 = \hat{\psi}(0) \quad \text{for every $v\in\R^d$.}
    \end{align*}
     Finally, the smoothness of $\varphi$ yields $|\hat{\varphi}(v)|\leq C (1+|v|)^{-r-1}$ and by the representation of $\hat{\psi}$ we observe that $\hat{\psi}$ has sufficient decay.
\end{itemize}
\vspace{-0.4cm}
\end{proof}
\begin{figure}[ht]
  \centering
  \subfloat[][]{\includegraphics[width=0.47\linewidth]{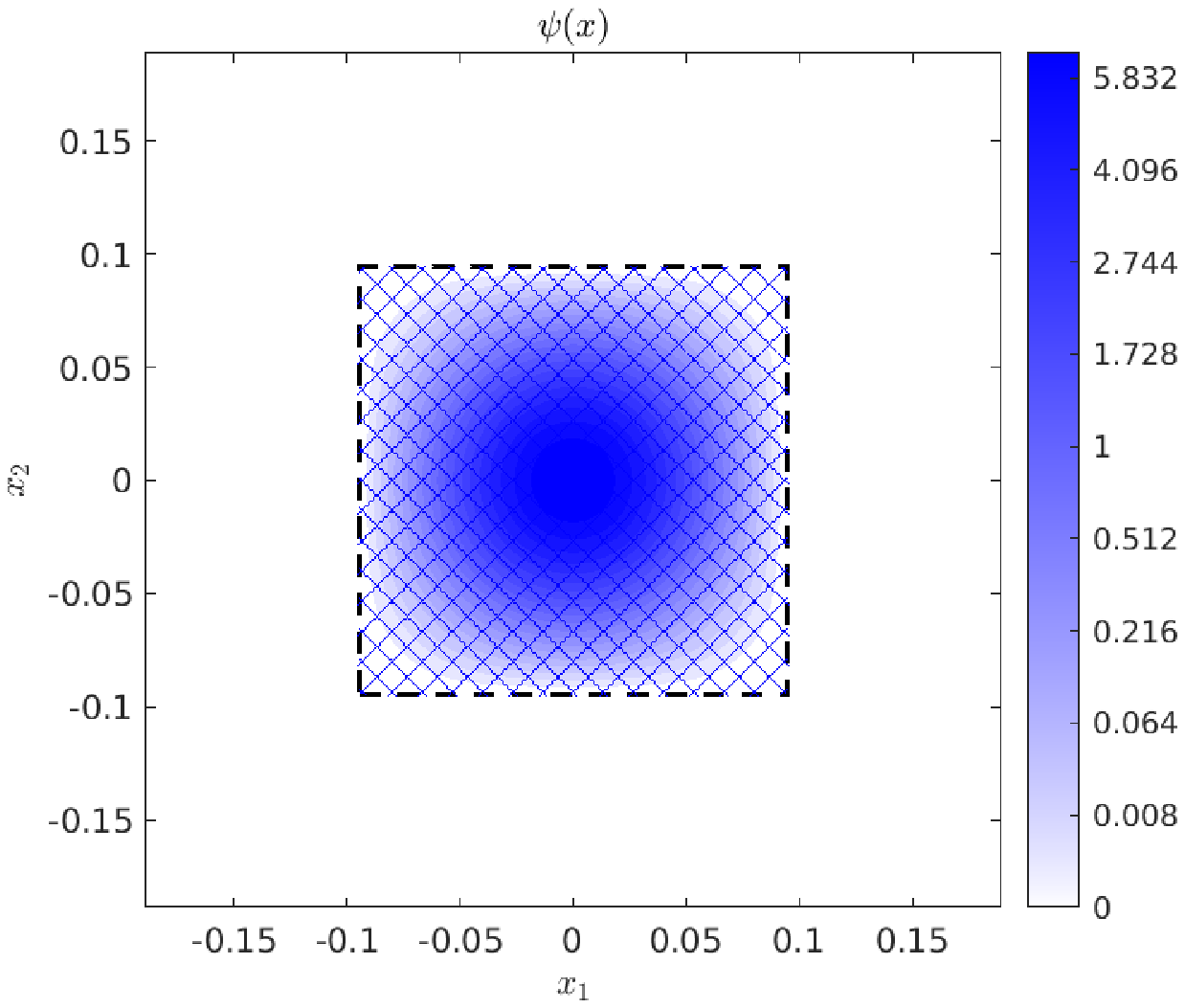}}
  \qquad
  \subfloat[][]{\includegraphics[width=0.47\linewidth]{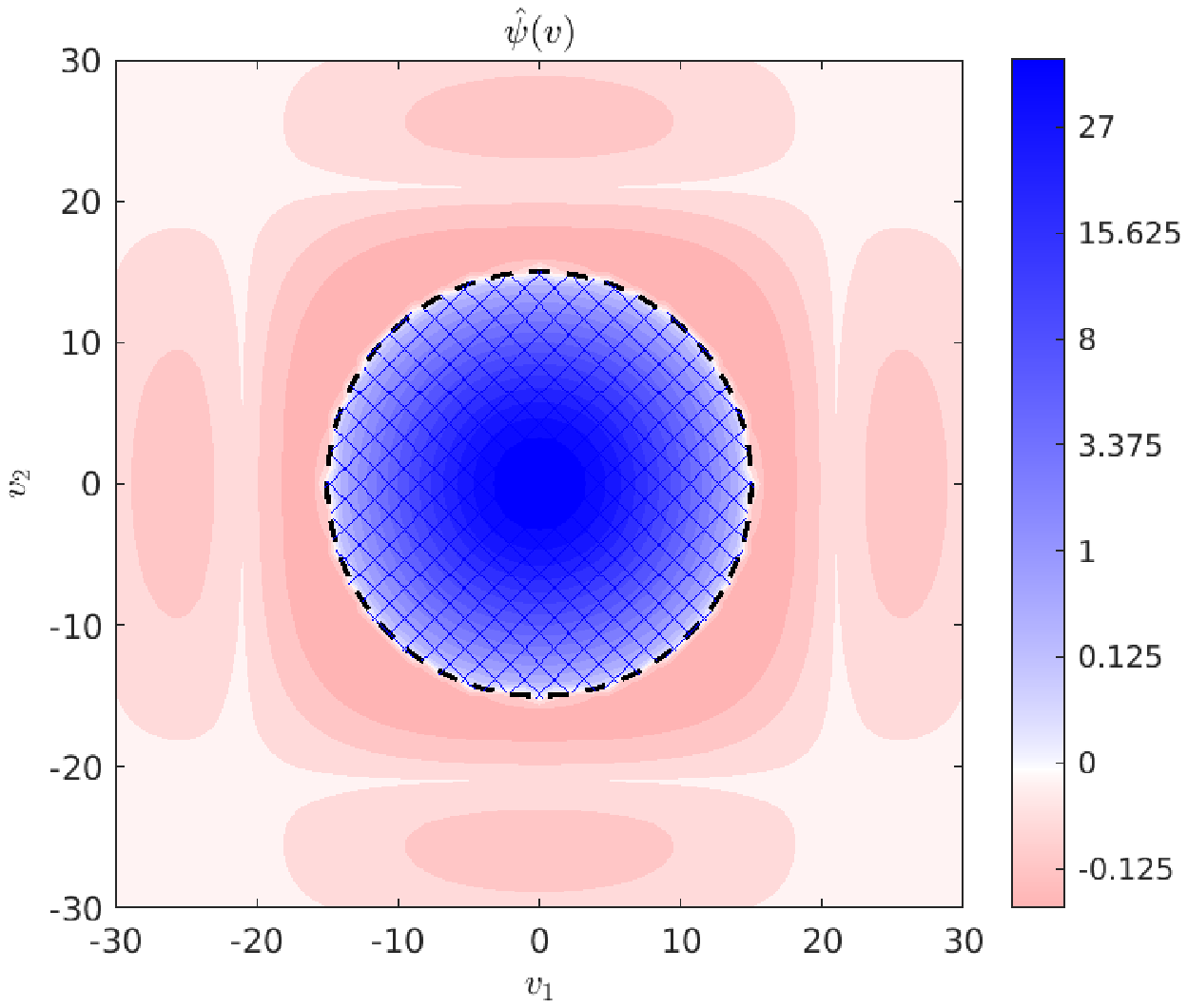}}
  \caption{Localising function $\psi$ and its Fourier transform $\hat{\psi}$ for $\varphi$ as chosen in \eqref{eq_cos_squared} and $N=15$, $q=\frac{\sqrt{2}}{N}$. The function $\psi$ is nonnegative inside of the dashed, hatched rectangle and zero outside (a) while $\hat{\psi}$ is nonnegative inside the dash-dotted, hatched circle and nonpositive outside (b). Both functions are maximal in zero for this choice of $\varphi$.}
    \label{fig:psi_psi_hat}
\end{figure}
Compactly supported, symmetric functions like $\varphi$ are often called \textit{window functions} in signal processing and statistics (cf.\,\cite{Plonka_18}). Based on this concept, we refer to $\varphi$ as a window function or window. From now on, we concentrate on the $\ell^2$-case where $p=2$. Note that choosing a different $p$ might require a different window function $\varphi$. Nevertheless, this approach does not limit the applicability of the result due to the fact that having samples on $\mathcal{B}=\{k\in\Z^d: \|k\|_p\leq N\}$ for $p>2$ implies knowledge about the moments on $\{k\in\Z^d: \|k\|_2\leq N\}$.\par  
This subsection deals only with the case of $d=2$ using a localising function $\psi$ depicted in \cref{fig:psi_psi_hat} with 
\begin{align} \label{eq_cos_squared}
    \varphi(x)=\begin{cases} \cos^2\left(\frac{\pi x}{q}\right), &\quad |x|<\frac{q}{2},\\ 0, &\quad \text{otherwise.} \end{cases}
\end{align}
as our choice for the window function. Note that this function was also considered (up to a factor of 2) in \cite[Rem.\,2.3]{Kunis_17} but for the case $p=4$. One can identify our choice of $\varphi$ as the \textit{Hann (or Hanning)-window} (cf.\,\cite[p.\,95]{Plonka_18}). In order to explain our choice of $\varphi$, we emphasize that for statements like \cref{Prop_lower_bound} and \cref{Thm_Diederichs_2D} we need $\psi$ to have its global maximum in 0. Even further, we will need a positive lower bound for the decay $\psi(0)-\psi(x)$ in terms of $\|x\|_{\T^2}$ later. We show in the two following remarks that two other typical choices of $\varphi$ fail to have this property.
\begin{Rem} \label{Rem_phi_1}
The function $\psi$ does not have a maximum in zero, if we choose $\varphi=\varphi_1$ where $\varphi_1$ is the function
\begin{align*}
    \varphi_1(x)=\begin{cases} 1-\left(\frac{2x}{q}\right)^2, &\quad |x|\leq \frac{q}{2} \\ 0,& \quad \text{otherwise} \end{cases}
\end{align*}
which is used in \cite{Kunis_17}. We can conclude from
\begin{align*}
    \varphi_1*\varphi_1(x)\leq \int_{\R} |\varphi_1(y)| |\varphi_1(x-y)| dx \leq \|\varphi_1\|_2^2 = \varphi_1*\varphi_1(0)
\end{align*}
that $\varphi_1*\varphi_1$ has a local and global maximum at 0. Hence, the function $\varphi_1*\varphi_1$ has a vanishing derivative at $x=0$ and this means
\begin{align*}
    \varphi_1*\varphi_1(\epsilon)=\varphi_1*\varphi_1(0)+(\varphi_1*\varphi_1)'(0)\epsilon + o(\epsilon) = \frac{8q}{15} + o(\epsilon)
\end{align*}
for $\epsilon>0$. Because $\varphi_1$ has the weak derivative $\varphi_1'(x)=\frac{-8x}{q^2}$, it is straightforward to compute 
\begin{align*} 
(\varphi_1*\varphi_1)''(\epsilon)=(\varphi_1'*\varphi_1')(\epsilon)=-\frac{16}{3q} +\frac{16}{q^2}\epsilon+o(\epsilon)
\end{align*}
for $\epsilon>0$. Plugging this into the definition of $\psi$, we obtain
\begin{align*}
    \psi(\epsilon,\epsilon)&=(2\pi N)^2 \left(\varphi_1*\varphi_1(\epsilon)\right)^2 + 2 (\varphi_1*\varphi_1)''(\epsilon) (\varphi_1*\varphi_1)(\epsilon) \\
    &=(2\pi N)^2(\varphi_1*\varphi_1)(0)^2 + 2 (\varphi_1*\varphi_1)''(0) (\varphi_1*\varphi_1)(0) +\frac{16^2}{15} \epsilon+o(\epsilon) \\
    &=\psi(0,0)+\frac{16^2}{15} \epsilon+o(\epsilon).
\end{align*}
Consequently, $\psi$ can not have a maximum in 0.
\end{Rem}
\begin{Rem} In \cite[Rem.\,2.2]{Kunis_17} the alternative
\begin{align*}
    \varphi_2(x)=\begin{cases} \cos\left(\frac{\pi x}{q}\right), &\quad |x|<\frac{q}{2},\\ 0, &\quad \text{otherwise} \end{cases}
\end{align*}
is mentioned and this function has $\varphi_2''=-\pi^2 q^{-2} \varphi_2 + \pi q^{-1} \left(\delta_{q/2}+\delta_{-q/2}\right)$ as its second distributional derivative. But again, the resulting $\psi$ does not have a global maximum in 0. Using the same technique as before, we have
\begin{align*}
    \varphi_2*\varphi_2(\epsilon)=\varphi_2*\varphi_2(0)+(\varphi_2*\varphi_2)'(0) \epsilon + o(\epsilon) = \frac{q}{2} + o(\epsilon).
\end{align*}
Applying the expression for the second distributional derivative, one finds
\begin{align*}
    (\varphi_2*\varphi_2)''(\epsilon)&=-\frac{\pi^2}{q^2} \varphi_2*\varphi_2(\epsilon) + \frac{\pi}{q}\left(\varphi_2(\epsilon-q/2)+\varphi_2(\epsilon+q/2)\right) \\
    &=-\frac{\pi^2}{2q}+\frac{\pi}{q}\sin\left(\frac{\pi|\epsilon|}{q}\right) + o(\epsilon) \\
    &=-\frac{\pi^2}{2q}+\frac{\pi^2}{q^2}|\epsilon|+o(\epsilon).
\end{align*}
Therefore, we end up with $\psi(\epsilon,\epsilon)=\psi(0,0)+\frac{\pi^2}{q} |\epsilon| +o(\epsilon)$ and $\psi$ can not have a maximum at the origin.
\end{Rem}
In both attempts and especially for $\varphi_2$, the non-smoothness at the boundary of the support was the problem causing the appearance of additional terms which do not allow the function to be maximal in 0. This motivated us to consider $\varphi$ as in \eqref{eq_cos_squared}. We display the attempted window functions in \cref{fig:phi_and_phi_convolved} where we also include $\varphi*\varphi$ explicitly computed in the following lemma.
\begin{figure}[ht]
    \centering
    \includegraphics[width=\textwidth]{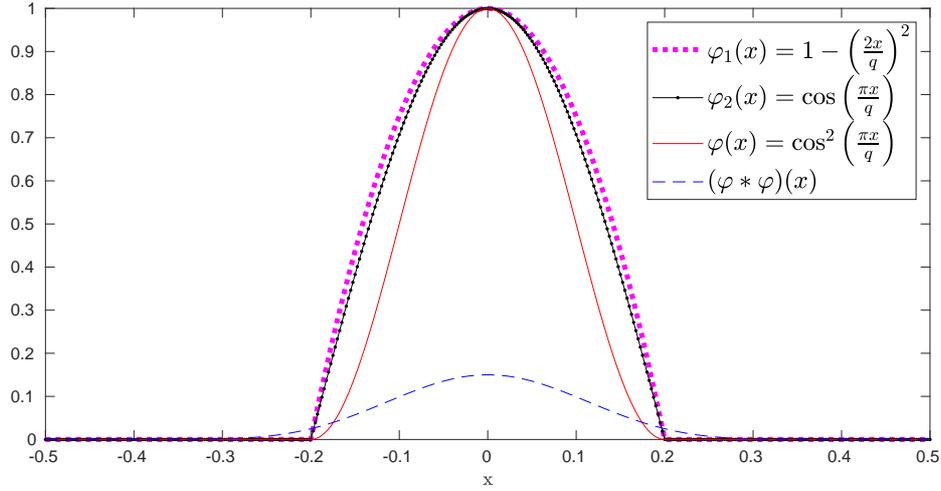}
    \caption{Window functions $\varphi_1,\varphi_2, \varphi$ and convolution $\varphi*\varphi$ for $q=0.4$. Note that $\varphi$ has higher regularity compared to $\varphi_1$ and $\varphi_2$ at $x=\pm \frac{q}{2}=\pm 0.2$.}
    \label{fig:phi_and_phi_convolved}
\end{figure}
\begin{Lem} \label{Lem_phi_explicit}
Let $\varphi$ be defined as in \eqref{eq_cos_squared}. Then we have
\begin{align*}
    \varphi*\varphi(x)=\begin{cases} \frac{q-|x|}{4} \left(1+\frac{1}{2}\cos\left(\frac{2\pi x}{q}\right)\right) + \frac{3}{8} \frac{q}{2\pi} \sin\left(\frac{2\pi |x|}{q}\right), &\quad |x|<q \\ 0,&\quad \text{otherwise}\end{cases}
\end{align*}
and
\begin{align*}
    (\varphi*\varphi)''(x)=-\frac{4\pi^2}{q^2} \varphi*\varphi(x) + \frac{\pi^2}{q^2} \left(\frac{q}{2\pi} \sin\left(\frac{2\pi|x|}{q}\right)+q-|x|\right)
\end{align*}
for $|x|<q$. Furthermore, the localising function $\psi$ taking the form
\begin{align*}
    \psi(x)&=\left[(2\pi N)^2-\frac{8\pi^2}{q^2}\right] (\varphi*\varphi)(x_1) \cdot (\varphi*\varphi)(x_2) \\
    &\,+\frac{\pi^2}{q^2} \left[\left(\frac{q}{2\pi} \sin\left(\frac{2\pi|x_1|}{q}\right)+q-|x_1|\right) \varphi*\varphi(x_2) + \left(\frac{q}{2\pi} \sin\left(\frac{2\pi|x_2|}{q}\right)+q-|x_2|\right) \varphi*\varphi(x_1) \right]
\end{align*}
has its global maximum in $(x_1,x_2)^\top=(0,0)^\top$ if $Nq\geq \sqrt{2}$.
\end{Lem}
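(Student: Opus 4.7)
The plan is to verify the explicit formulas for $\varphi*\varphi$ and $(\varphi*\varphi)''$ by direct computation, substitute into the definition of $\psi$ from \cref{Prop_psi_generell}, and then reduce the maximum property to the monotonicity of two one-variable functions. I would first record that $\varphi$ is even and nonnegative, so $\varphi*\varphi$ is even, and reduce the computation of $(\varphi*\varphi)(x)$ to $x\in[0,q)$. Writing $\cos^2(\pi y/q)=\tfrac12(1+\cos(2\pi y/q))$, the convolution
\begin{align*}
\varphi*\varphi(x)=\tfrac14\int_{x-q/2}^{q/2}\bigl(1+\cos(2\pi y/q)\bigr)\bigl(1+\cos(2\pi(y-x)/q)\bigr)\,dy
\end{align*}
reduces, via the product-to-sum identity, to integrals of $\cos(2\pi y/q)$, $\cos(2\pi(y-x)/q)$, and $\cos(2\pi(2y-x)/q)$ together with the constant $1+\tfrac12\cos(2\pi x/q)$ over an interval of length $q-x$. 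Each of these antiderivatives is elementary; collecting terms using $\sin(\pi-\theta)=\sin\theta$ and $\sin(2\pi-\theta)=-\sin\theta$ produces exactly the claimed expression. I expect this part to be straightforward bookkeeping rather than an obstacle.

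Next I would obtain the formula for $(\varphi*\varphi)''$. The cleanest route is to differentiate the explicit expression twice on $(0,q)$ (which is legitimate since the explicit formula is $C^\infty$ there, and one checks the required regularity at the endpoints using that $\varphi\in C^1$ with $\varphi(\pm q/2)=\varphi'(\pm q/2)=0$). The term $\tfrac{q-|x|}{4}(1+\tfrac12\cos(2\pi x/q))$ contributes $-\tfrac{4\pi^2}{q^2}$ times the corresponding trigonometric part plus boundary-like linear terms, and the term $\tfrac{3q}{16\pi}\sin(2\pi|x|/q)$ contributes a multiple of $\sin(2\pi|x|/q)$ plus a $(q-|x|)$-piece after grouping. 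A direct simplification then isolates $-\tfrac{4\pi^2}{q^2}\varphi*\varphi(x)$ plus the residual $\tfrac{\pi^2}{q^2}\bigl(\tfrac{q}{2\pi}\sin(2\pi|x|/q)+q-|x|\bigr)$.

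Substituting into the formula $\psi(x_1,x_2)=(2\pi N)^2(\varphi*\varphi)(x_1)(\varphi*\varphi)(x_2)+(\varphi*\varphi)''(x_1)(\varphi*\varphi)(x_2)+(\varphi*\varphi)(x_1)(\varphi*\varphi)''(x_2)$ from \cref{Prop_psi_generell} (with $p=2r=2$, so $(-1)^r=-1$), the two occurrences of $-\tfrac{4\pi^2}{q^2}\varphi*\varphi(x_s)\cdot\varphi*\varphi(x_{3-s})$ combine to $-\tfrac{8\pi^2}{q^2}(\varphi*\varphi)(x_1)(\varphi*\varphi)(x_2)$, which yields the stated representation of $\psi$.

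For the maximum claim, set $g(x):=\tfrac{q}{2\pi}\sin(2\pi|x|/q)+q-|x|$. Then $g(0)=q$ and for $0<|x|<q$ one has $g'(x)=\operatorname{sgn}(x)(\cos(2\pi|x|/q)-1)\leq 0$, so $g$ is nonincreasing in $|x|$ and in particular nonnegative. Moreover, $\varphi*\varphi\geq 0$ and the standard estimate $\varphi*\varphi(x)\leq\|\varphi\|_2^2=\varphi*\varphi(0)$ shows $\varphi*\varphi$ is maximal at $0$. Hence the second bracket in the representation of $\psi$, which has the positive prefactor $\tfrac{\pi^2}{q^2}$, is maximal at the origin, and the product $(\varphi*\varphi)(x_1)(\varphi*\varphi)(x_2)$ is maximal at the origin. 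The main (and essentially only) sign condition is therefore that the prefactor $(2\pi N)^2-\tfrac{8\pi^2}{q^2}$ of the first term be nonnegative, which is equivalent to $Nq\geq\sqrt{2}$. Under this assumption $\psi(x_1,x_2)\leq\psi(0,0)$ for all $(x_1,x_2)\in\R^2$, finishing the proof. The only subtle step is keeping track of sign conventions in the differentiation and identifying the $-\tfrac{8\pi^2}{q^2}$ coefficient correctly; everything else is a straightforward trigonometric computation.
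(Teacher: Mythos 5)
Your proposal is correct and follows essentially the same path as the paper: explicit computation of $\varphi*\varphi$, substitution into the formula for $\psi$, and the observation that both $\varphi*\varphi$ and $g(x)=\frac{q}{2\pi}\sin(2\pi|x|/q)+q-|x|$ are nonnegative and maximal at $0$, so that maximality of $\psi$ at the origin reduces to the sign of $(2\pi N)^2-\frac{8\pi^2}{q^2}$. The only cosmetic differences are that you obtain $(\varphi*\varphi)''$ by differentiating the closed form (the paper instead convolves the weak second derivative $\varphi''=-\frac{4\pi^2}{q^2}\varphi+\frac{\pi^2}{q^2}\chi_{[-q/2,q/2]}$ against $\varphi$), and that you establish $g(x)\le g(0)$ by checking $g'\le 0$ rather than invoking $\sin\theta\le\theta$.
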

\begin{proof}
It is clear that $\supp(\varphi*\varphi)\subset \supp \varphi + \supp \varphi = [-q,q]$. For $|x|<q$ one can observe that the common support of $\varphi$ and $\varphi(x-\cdot)$ is the interval $[\max(-\frac{q}{2},x-\frac{q}{2}),\min(\frac{q}{2},x+\frac{q}{2})]$. Therefore, well-known trigonometric identities yield
\begin{align*}
    \varphi*\varphi(x)&= \int_{\max(-q/2,x-q/2)}^{\min(q/2,x+q/2)} \cos^2\left(\frac{\pi y}{q}\right) \cos^2\left(\frac{\pi (x-y)}{q}\right) dy \\
    &=\frac{1}{4}\int_{\max(-q/2,x-q/2)}^{\min(q/2,x+q/2)} \left(1+\cos\left(\frac{2\pi y}{q}\right)\right) \left(1+\cos\left(\frac{2\pi (x-y)}{q}\right)\right) dy \\
    &=\frac{1}{4}\int_{\max(-q/2,x-q/2)}^{\min(q/2,x+q/2)} 1 + \cos\frac{2\pi y}{q} + \cos\frac{2\pi (x-y)}{q} + \frac{1}{2} \cos\frac{2\pi x}{q} + \frac{1}{2} \cos\frac{2\pi(2y-x)}{q} dy \\
    &=\frac{q-|x|}{4} \left(1+\frac{1}{2}\cos\left(\frac{2\pi x}{q}\right)\right) + \frac{3}{8} \frac{q}{2\pi} \sin\left(\frac{2\pi |x|}{q}\right)
\end{align*}
since all terms in the integrand can be integrated easily. Additionally, the function $\varphi$ has the integrable weak second derivative
\begin{align*}
    \varphi''(x)=-\frac{4\pi^2}{q^2} \varphi(x) + \frac{\pi^2}{q^2} \chi_{[-q/2,q/2]}(x)
\end{align*}
leading to
\begin{align}
    (\varphi*\varphi)''(x)&=-\frac{4\pi^2}{q^2} \varphi*\varphi(x) + \frac{\pi^2}{q^2} \int_{\max(-q/2,x-q/2)}^{\min(q/2,x+q/2)} \varphi(x-y) dy \nonumber\\
    &=-\frac{4\pi^2}{q^2} \varphi*\varphi(x) + \frac{\pi^2}{q^2} \left(q-|x|+ \left[\frac{q}{2\pi}\sin\frac{2\pi (y-x)}{q}\right]_{\max(-q/2,x-q/2)}^{\min(q/2,x+q/2)}\right) \nonumber\\
    &=-\frac{4\pi^2}{q^2} \varphi*\varphi(x) + \frac{\pi^2}{q^2} \left(\frac{q}{2\pi} \sin\left(\frac{2\pi|x|}{q}\right)+q-|x|\right). \label{eq_varphi_scd_deriv}
\end{align}
So we find
\begin{align*}
    \psi(x)&=(2\pi N)^2 (\varphi*\varphi)(x_1) \cdot (\varphi*\varphi)(x_2) + (\varphi*\varphi)''(x_1) \cdot \varphi*\varphi(x_2) + \varphi*\varphi(x_1) \cdot (\varphi*\varphi)''(x_2) \\
    &=\left[(2\pi N)^2-\frac{8\pi^2}{q^2}\right] (\varphi*\varphi)(x_1) \cdot (\varphi*\varphi)(x_2) \\
    &\,+\frac{\pi^2}{q^2} \left[\left(\frac{q}{2\pi} \sin\left(\frac{2\pi|x_1|}{q}\right) + q-|x_1|\right) \varphi*\varphi(x_2) + \left(\frac{q}{2\pi} \sin\left(\frac{2\pi|x_2|}{q}\right)+q-|x_2|\right) \varphi*\varphi(x_1) \right].
\end{align*}
The key observation is that 
\begin{align}
    \frac{q}{2\pi} \sin\left(\frac{2\pi|x|}{q}\right)+q-|x|\leq \frac{q}{2\pi} \frac{2\pi|x|}{q}+q-|x| = q = \frac{q}{2\pi} \sin\left(\frac{2\pi|0|}{q}\right)+q-|0| \label{eq_key_observe}
\end{align}
allows us to estimate\footnote{Note that $\varphi*\varphi$ is nonnegative and attains its maximum in 0 which can be proven analogously to the proof for $\varphi_1$ in \cref{Rem_phi_1}. Moreover, $\psi\geq 0$ since $\frac{1}{2\pi}\sin\left(\frac{2\pi |x|}{q}\right)+1-\frac{|x|}{q}\geq 0$ for $|x|\leq q$ by direct computation.}
\begin{align*}
    0\leq \psi(x)&\leq \left[(2\pi N)^2-\frac{8\pi^2}{q^2}\right] (\varphi*\varphi)(0) \cdot (\varphi*\varphi)(0) \\
    &\quad+\frac{\pi^2}{q^2} \left[\left(\frac{q}{2\pi} \sin\left(\frac{2\pi|0|}{q}\right)+q-|0|\right) \varphi*\varphi(0) + \left(\frac{q}{2\pi} \sin\left(\frac{2\pi|0|}{q}\right)+q-|0|\right) \varphi*\varphi(0) \right] \\
    &=\psi(0,0)
\end{align*}
if $(2\pi N)^2-\frac{8\pi^2}{q^2}\geq 0$. The latter is equivalent to $Nq\geq \sqrt{2}$.
\end{proof}
By the previous lemma, the most general choice for $q$ is $q=\frac{\sqrt{2}}{N}$ and for this we have
\begin{align*}
    \psi(0,0)&= \frac{\pi^2}{q^2} 2q(\varphi*\varphi)(0) = \frac{3\pi^2}{4}
\end{align*}
as the value of $\psi$ in the global maximum at zero. In order to find lower bounds for $\psi(0,0)-\psi(x)$ we need the following upper bounds for $\varphi*\varphi$.
\begin{Lem} \label{Lem_bound_conv}
Let $a\in \left(\frac{1}{2},1\right]$ and set up the difference quotient 
\begin{align}
    m=\frac{(\varphi*\varphi)(aq)-(\varphi*\varphi)(q/2)}{aq-q/2} \label{eq_Diff_Quotient}
\end{align} 
where $\varphi$ is defined as in \eqref{eq_cos_squared}. Then, one has
\begin{align*}
    (\varphi*\varphi)(x)\leq 
    \begin{cases} 
    \frac{3q}{8} - \frac{5}{4}\frac{|x|^2}{q} &,\quad |x|<\frac{q}{2}, \\
    \frac{q}{16}+\left(|x|-\frac{q}{2}\right)m &, \quad \frac{q}{2}\leq |x| \leq aq. 
    \end{cases}
\end{align*}
\end{Lem}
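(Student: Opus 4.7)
The plan is to prove the two upper bounds separately. Both rely on the explicit formula for $\varphi*\varphi$ from Lemma \ref{Lem_phi_explicit} and on the endpoint values $\varphi*\varphi(0) = 3q/8$ and $\varphi*\varphi(q/2) = q/16$. These show that the claimed quadratic $\tfrac{3q}{8} - \tfrac{5x^2}{4q}$ is precisely the even quadratic through $(0, \varphi*\varphi(0))$ and $(q/2, \varphi*\varphi(q/2))$, and that the affine function $\tfrac{q}{16} + (|x|-\tfrac{q}{2}) m$ is the secant line of $\varphi*\varphi$ on $[q/2, aq]$; by evenness it suffices to treat $x \geq 0$.

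For the affine bound on $[q/2, aq]$, I would deduce it from convexity of $\varphi*\varphi$ on the larger interval $[q/2, q]$, since the graph of a convex function lies below any of its secants. Using the explicit formulas for $\varphi*\varphi$ and $(\varphi*\varphi)''$ from Lemma \ref{Lem_phi_explicit} and substituting $u = 2\pi x/q \in [\pi, 2\pi]$, after cancellations the condition $(\varphi*\varphi)''(x) \geq 0$ reduces to the trigonometric inequality
\[
    (2\pi - u)\cos u + \sin u \leq 0 \quad \text{for } u \in [\pi, 2\pi].
\]
Differentiating the left side yields $-(2\pi - u)\sin u$, which is nonnegative on $[\pi,2\pi]$ because $\sin u \leq 0$ there. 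Hence the expression is nondecreasing on $[\pi, 2\pi]$ from $-\pi$ at $u = \pi$ up to $0$ at $u = 2\pi$, so it is nonpositive throughout and convexity follows.

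For the quadratic bound on $[0, q/2]$, set $G(x) := \tfrac{3q}{8} - \tfrac{5x^2}{4q} - \varphi*\varphi(x)$, so that $G(0) = G(q/2) = 0$ by the endpoint computation, and it remains to show $G \geq 0$ on $[0, q/2]$. Substituting $u = 2\pi x/q \in [0, \pi]$ and differentiating using Lemma \ref{Lem_phi_explicit}, a short calculation gives the clean product form
\[
    \frac{d^3}{du^3} G\!\left(\tfrac{qu}{2\pi}\right) = -\frac{q\sin u\,(2\pi - u)}{16\pi} \leq 0 \quad \text{on } [0,\pi].
\]
Consequently $u \mapsto (d^2/du^2) G(qu/(2\pi))$ is nonincreasing; since this second derivative equals $\tfrac{q}{8}(1 - 5/\pi^2) > 0$ at $u = 0$ and is strictly negative at $u = \pi$, it changes sign exactly once. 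Together with the vanishing of the first derivative of $u \mapsto G(qu/(2\pi))$ at $u = 0$ and its value $-3q/(8\pi) < 0$ at $u = \pi$, this forces this first derivative to be nonnegative on an initial subinterval of $[0,\pi]$ and nonpositive on the remainder. Hence $u \mapsto G(qu/(2\pi))$ is unimodal with zero boundary values and therefore nonnegative.

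The main obstacle is the quadratic bound: a one-step convexity or concavity argument for $G$ on $[0, q/2]$ fails because $G''$ itself changes sign on this interval (one checks $G''(0) = (\pi^2 - 5)/(2q) > 0$ while $G''(q/2) = -(10+\pi^2)/(4q) < 0$), and na\"ive Taylor upper bounds for $\cos(2\pi x/q)$ on $[0, \pi]$ produce constants of the wrong sign in front of $x^2$. The detour via the third derivative, which admits the factored form $-q\sin u\,(2\pi - u)/(16\pi)$ of definite sign on $[0, \pi]$, is what makes the unimodality argument go through cleanly.
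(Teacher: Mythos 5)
Your proof is correct and takes essentially the same route as the paper: for the quadratic bound you use the same auxiliary function $\frac{3q}{8}-\frac{5x^2}{4q}-(\varphi*\varphi)(x)$ with vanishing boundary values and the nonpositive third derivative $-(\varphi*\varphi)'''(x)=-\frac{\pi^3(q-x)}{q^3}\sin\frac{2\pi x}{q}$ to force unimodality, which is the paper's Rolle-type argument spelled out in more detail. For the affine bound your trigonometric inequality $(2\pi-u)\cos u+\sin u\le 0$ on $[\pi,2\pi]$ is precisely the paper's observation that $(\varphi*\varphi)''$ decreases to $(\varphi*\varphi)''(q)=0$ on $[q/2,q]$, giving convexity and hence the secant-line estimate.
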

\begin{proof}
For the first inequality, we can consider by symmetry of $\varphi*\varphi$ that  $0\leq x\leq \frac{q}{2}$ and define the auxiliary function
\begin{align*}
    g(x)=\frac{3q}{8} - \frac{5}{4}\frac{|x|^2}{q}-(\varphi*\varphi)(x)
\end{align*}
for $x\in\left[0,\frac{q}{2}\right]$. By construction, one has $g(0)=g(q/2)=0$, $g'(x)=0$ and $g''(0)>0$ yielding a local minimum of $g$ in 0. Using \eqref{eq_varphi_scd_deriv} we can also compute the third derivative
\begin{align*}
    g^{(3)}(x)=-(\varphi*\varphi)^{(3)}(x)=-\frac{\pi^3(q-x)}{q^3} \sin\left(\frac{2\pi x}{q}\right)\leq 0
\end{align*}
for all $x\in \left[0,\frac{q}{2}\right]$. Hence, $g''$ is monotonically decreasing and there can be at most one local extreme point in $g'$. By Rolle's theorem, there is always a local extremum between two zeros of a differentiable function like $g'$. Therefore, $g'$ can have at most two zeros and $g$ itself cannot have an additional local minimum in $\left(0,\frac{q}{2}\right)$. So we have $g(x)\geq g(0)=g(q/2)=0$.\par
For the second part, we reuse our computation of the third derivative of $\varphi*\varphi$ and notice that it is negative on $\left[\frac{q}{2},q\right]$. Consequently, we have $(\varphi*\varphi)''(x)\geq (\varphi*\varphi)''(q)=0$ meaning convexity of $\varphi*\varphi$ on $\left[\frac{q}{2},q\right]$. This leads to the statement by noting that the upper bound is the linear interpolation of $\varphi*\varphi$ between $\frac{q}{2}$ and $aq\in\left(\frac{q}{2},q\right]$.
\end{proof}
\begin{Lem} \label{Lem_lower_bound_for_difference}
Let $Nq= \sqrt{2}$. Then, the localising function $\psi$ admits
\begin{align*}
    \psi(0)-\psi(x)\geq 
    \begin{cases} 
    \frac{5}{4}\frac{\pi^2}{q^2} \|x\|_{\infty}^2 &,\quad \|x\|_{\infty}\leq\frac{q}{2},\\
    \frac{1-m}{2} \frac{\pi^2}{q} \|x\|_{\infty} &,\quad \frac{q}{2}\leq \|x\|_{\infty}\leq aq
    \end{cases}
\end{align*}
for $a,m$ as in \cref{Lem_bound_conv}.
\end{Lem}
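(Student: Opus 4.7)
The plan is to exploit the crucial cancellation that $Nq=\sqrt{2}$ makes the prefactor $(2\pi N)^2-\tfrac{8\pi^2}{q^2}$ in the explicit form of $\psi$ from \cref{Lem_phi_explicit} vanish. Writing $F(t):=\tfrac{q}{2\pi}\sin(\tfrac{2\pi t}{q})+q-t$ and $G(t):=(\varphi*\varphi)(t)$ for $t\in[0,q]$, the formula for $\psi$ collapses to
$$\psi(x)=\frac{\pi^2}{q^2}\bigl[F(\eta)G(\xi)+F(\xi)G(\eta)\bigr],\qquad \psi(0)=\frac{2\pi^2}{q^2}F(0)G(0)=\frac{3\pi^2}{4},$$
where, by symmetry of $\psi$ in $|x_1|$ and $|x_2|$, I set $\eta:=\|x\|_\infty$ and $\xi:=\min(|x_1|,|x_2|)$. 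Since $F,G\geq 0$ on $[0,q]$ and are maximised at the origin with $F(0)=q$ and $G(0)=\tfrac{3q}{8}$, the inequalities $F(\xi)\leq F(0)$ and $G(\xi)\leq G(0)$ eliminate the $\xi$-dependence and yield the clean lower bound
$$\psi(0)-\psi(x)\geq \frac{3\pi^2}{8q}\bigl(F(0)-F(\eta)\bigr)+\frac{\pi^2}{q}\bigl(G(0)-G(\eta)\bigr).$$

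For the first case $\eta\leq q/2$ I will discard the $F$-summand and apply the first estimate of \cref{Lem_bound_conv}, which gives $G(0)-G(\eta)\geq\tfrac{5\eta^2}{4q}$; this immediately produces the quadratic bound $\psi(0)-\psi(x)\geq\tfrac{5\pi^2}{4q^2}\eta^2$.

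For the second case $q/2\leq\eta\leq aq$ the key observation will be that $\sin(\tfrac{2\pi\eta}{q})\leq 0$ on $[q/2,q]$, so $F(0)-F(\eta)=\eta-\tfrac{q}{2\pi}\sin(\tfrac{2\pi\eta}{q})\geq\eta$. Combined with the chord bound $G(0)-G(\eta)\geq\tfrac{5q}{16}-m(\eta-q/2)$ coming from the second part of \cref{Lem_bound_conv}, the claim $\psi(0)-\psi(x)\geq\tfrac{(1-m)\pi^2}{2q}\eta$ reduces after multiplying by $\tfrac{16q}{\pi^2}$ to the elementary inequality
$$q(5+8m)\geq 2\eta(1+4m).$$
Since $G$ is convex on $[q/2,q]$ with $G(q/2)=q/16$ and $G(q)=0$, the secant slope $m=m(a)$ is monotone with $\lim_{a\to 1/2^+} m(a)=G'(q/2)=-\tfrac{1}{2}$ and $m(1)=-\tfrac{1}{8}$, so $m\in(-\tfrac{1}{2},-\tfrac{1}{8}]$. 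In particular $5+8m\geq 1>0$, and a short case split on the sign of $1+4m$ (noting that when $1+4m\leq 0$ the right-hand side is nonpositive, while for $1+4m>0$ the induced bound $\eta\leq\tfrac{q(5+8m)}{2(1+4m)}$ is minimised at $m=-1/8$ with value $4q>aq$) confirms the reduced inequality on the whole admissible range.

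The main obstacle will be this second case: the single estimate $\tfrac{\pi^2}{q}(G(0)-G(\eta))$ alone falls short of delivering the constant $(1-m)/2$ when $|m|$ is near $1/2$, and it is essential to bring in the concurrent decrease of $F$ via the sign observation $\sin(\tfrac{2\pi\eta}{q})\leq 0$ on $[q/2,q]$.
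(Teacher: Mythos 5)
Your argument is correct and follows essentially the same route as the paper: it exploits that $Nq=\sqrt{2}$ kills the prefactor $(2\pi N)^2-\tfrac{8\pi^2}{q^2}$ in \cref{Lem_phi_explicit}, bounds the factors evaluated at the smaller coordinate by their values at zero (the paper's inequality \eqref{eq_key_observe} and the maximality of $\varphi*\varphi$ at $0$), and then applies \cref{Lem_bound_conv} to $(\varphi*\varphi)(\|x\|_{\infty})$. The only variation is in the regime $\tfrac{q}{2}\leq\|x\|_{\infty}\leq aq$, where the paper uses the constant bound $\tfrac{q}{2\pi}\sin(\tfrac{2\pi|x|}{q})+q-|x|\leq\tfrac{q}{2}$ together with $\|x\|_{\infty}\leq q$, while you use the linear decay of that term plus a short case analysis over $m\in[-\tfrac12,-\tfrac18]$; both yield the stated constant $\tfrac{1-m}{2}$.
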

\begin{proof}
The \cref{Lem_bound_conv,Lem_phi_explicit} together with \eqref{eq_key_observe} and the symmetry of $\varphi$ imply in the case of $x$ with $\|x\|_{\infty}\leq\frac{q}{2}$
\begin{align*}
    \psi(0)-\psi(x)&\geq\frac{3\pi^2}{4} - \frac{\pi^2}{q^2} \left[q(\varphi*\varphi)(0)+q(\varphi*\varphi)(\|x\|_{\infty})\right] \nonumber\\
    &\geq \frac{3\pi^2}{8} - \frac{\pi^2}{q} \left(\frac{3q}{8} - \frac{5}{4}\frac{\|x\|_{\infty}^2}{q}\right) \\
    &=\frac{5}{4}\frac{\pi^2}{q^2} \|x\|_{\infty}^2.
\end{align*}
Note that $\frac{q}{2\pi}\sin\left(\frac{2\pi x}{q}\right)+q-|x|\leq \frac{q}{2}$ for $x\in\left[\frac{q}{2},q\right]$. Hence, we obtain for $\|x\|_{\infty}\in\left[\frac{q}{2},q\right]$
\begin{align*}
    \psi(0)-\psi(x)&\geq \frac{3\pi^2}{4} - \frac{\pi^2}{q^2} \left[\frac{q}{2}(\varphi*\varphi)(0)+q(\varphi*\varphi)(\|x\|_{\infty})\right] \nonumber\\
    &\geq \frac{3\pi^2}{4} - \frac{\pi^2}{q} \left[\frac{3q}{16} +  \frac{q}{16}+\left(\|x\|_{\infty}-\frac{q}{2}\right)m \right] \\
    &= \frac{\pi^2}{2} + \frac{\pi^2 m}{2} - \frac{\pi^2 m\|x\|_{\infty}}{q} \\
    &\geq \left(\frac{1}{2}+\frac{m}{2}-m\right) \frac{\|x\|_{\infty}}{q} = \frac{1-m}{2} \frac{\pi^2}{q} \|x\|_{\infty}
\end{align*}
using $\|x\|_{\infty}\leq aq\leq q$ for the last inequality.
\end{proof}
\begin{figure}[ht]
    \centering
    \includegraphics[width=\textwidth]{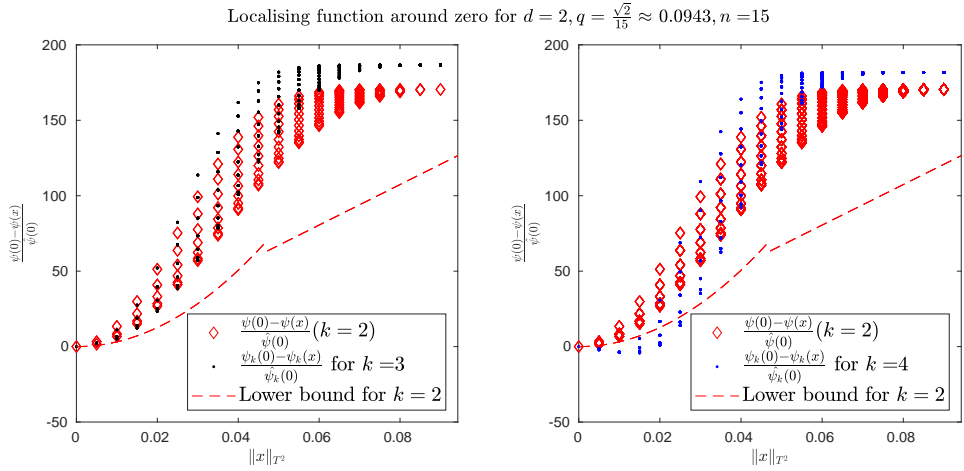}
    \caption{Behaviour of the localising function around zero (red) and the lower bound from \cref{Lem_lower_bound_for_difference} (red dashed) compared to $\frac{\psi_k(0)-\psi_k(x)}{\hat{\psi_k}(0)}$ for $k=3$ (black) and $k=4$ (blue).}
    \label{fig:localising_around_zero}
\end{figure}
We display $\frac{\psi(0)-\psi(x)}{\hat{\psi}(0)}$ and its lower bound from the previous \cref{Lem_lower_bound_for_difference} for $a=1$, $m=\frac{-1}{8}$ in \cref{fig:localising_around_zero}.\footnote{The normalisation by $\hat{\psi}(0)$ is motivated by the proof of the next theorem. The functions $\psi$ as well as the numerically computed $\psi_k$ were sampled on a grid and then $\|x\|_{\T^2}$ was computed for every grid point in order to generate the plot.} One might come up with the idea to consider higher powers $\cos^k\left(\frac{\pi x}{q}\right), k=3,4,\dots$, for the window function $\varphi$ in order to obtain higher regularity. The resulting localising function are called $\psi_k$ and they are included in \cref{fig:localising_around_zero} for $k=3,4$ where we computed these functions numerically. On one hand, one observes that the lower bound from \cref{Lem_lower_bound_for_difference} cannot be improved very much. On the other hand, choosing $k=3$ cannot be so much better than our choice $k=2$ and for $k\geq 4$ we do not even have a maximum of $\psi_k(x)$ for $x=0$. So we use $\varphi$ and hence $\psi$ as discussed in the previous lemmas to prove \cref{Thm_l2_lower_bound_2D}. In the course of this, we use the following Lemma proven in the appendix:
\begin{Lem}\label{Lem_sequence_of_bounds}
Under the assumptions of \cref{Thm_l2_lower_bound_2D}, we have $\|t-\eta(t)\|_{\T^2}\leq\frac{\sqrt{2}}{2N}$ for all $t\in Y^{\hat{\mu}_1}$. 
\end{Lem}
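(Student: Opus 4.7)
The plan is to combine Poisson summation with the localising function $\psi$ from \cref{Lem_phi_explicit} to derive a central inequality, establish a coarse matching $\eta\colon Y^{\hat{\mu}_1}\to Y^{\hat{\mu}_2}$ within distance $q:=\sqrt{2}/N$, and then run a sequence of progressively sharper bounds on $\|t-\eta(t)\|_{\T^2}$ converging to $q/2=\sqrt{2}/(2N)$.

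First I would use the sign structure of $\hat{\psi}$ from \cref{Prop_psi_generell} together with the evaluations $\hat{\psi}(0)=\pi^2/N^2$ and $\psi(0)=3\pi^2/4$ (both at $Nq=\sqrt{2}$) to turn the hypothesis \eqref{eq_error_small} into
\[
S := \sum_{k\in\Z^2}\hat{\psi}(k)\,|\hat{\mu}_1(k)-\hat{\mu}_2(k)|^2 \;\leq\; \hat{\psi}(0)\sum_{\|k\|_2\leq N}|\hat{\mu}_1(k)-\hat{\mu}_2(k)|^2 \;<\; \psi(0)\,c_{\min}^2 .
\]
Simultaneously, Poisson summation applied to the periodisation $\tilde{\psi}(y):=\sum_{n\in\Z^2}\psi(y+n)$, combined with $\supp\psi\subset[-q,q]^2$ and the separation $\sep Y^{\hat{\mu}_i}\geq 2q$, rewrites $S$ as
\[
S=\psi(0)\!\!\sum_{t\in Y^{\hat{\mu}_1}}\!\!|c_t^{(1)}|^2+\psi(0)\!\!\sum_{t'\in Y^{\hat{\mu}_2}}\!\!|c_{t'}^{(2)}|^2-2\Re\!\!\!\sum_{\substack{t\in Y^{\hat{\mu}_1},\,t'\in Y^{\hat{\mu}_2}\\ \|t-t'\|_{\T^2}<q}}\!\!\!\! c_t^{(1)}\overline{c_{t'}^{(2)}}\,\tilde{\psi}(t-t'),
\]
where the separation of $Y^{\hat{\mu}_2}$ pairs each $t\in Y^{\hat{\mu}_1}$ with at most one partner.

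Next I would derive the coarse matching by contradiction: an unmatched $t\in Y^{\hat{\mu}_1}$ would contribute $\psi(0)|c_t^{(1)}|^2\geq\psi(0)c_{\min}^2$ to $S$ alone, and the symmetric treatment of $Y^{\hat{\mu}_2}$ produces a bijection $\eta\colon Y^{\hat{\mu}_1}\to Y^{\hat{\mu}_2}$ with $\|t-\eta(t)\|_{\T^2}<q$. Substituting $2\Re(a\bar{b})=|a|^2+|b|^2-|a-b|^2$ into the identity for $S$ and dropping the non-negative weight-difference terms yields, for any chosen pair $(t_0,\eta(t_0))$, the per-pair estimate
\[
2c_{\min}^2\bigl(\psi(0)-\alpha_{t_0}\bigr)\leq S<\psi(0)c_{\min}^2,\qquad \alpha_{t_0}:=\tilde{\psi}(t_0-\eta(t_0))\in[0,\psi(0)],
\]
and therefore $\psi(0)-\alpha_{t_0}<\psi(0)/2=3\pi^2/8$.

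Finally, to sharpen from $<q$ to $\leq q/2$ I would iterate. Set $r_0:=1$; as long as $r_k>1/2$ and $\|t_0-\eta(t_0)\|_{\T^2}>q/2$, define $r_{k+1}:=3/\bigl(4(1-m(r_k))\bigr)$, where $m(r)$ denotes the chord slope of $\varphi*\varphi$ used in \cref{Lem_bound_conv} with parameter $a=r$. Applying \cref{Lem_lower_bound_for_difference} with $a=r_k$ to the per-pair estimate gives $\|t_0-\eta(t_0)\|_{\T^2}<r_{k+1}q$. The analytical crux is $(\varphi*\varphi)'(q/2)=-1/2$, which forces $m(r)\to -1/2$ as $r\to 1/2^+$ and makes $r=1/2$ the only fixed point of $r\mapsto 3/(4(1-m(r)))$ on $(1/2,1]$; a Taylor expansion around $1/2$ shows this map is a strict contraction, so $(r_k)$ decreases monotonically to $1/2$ and passing to the limit yields $\|t_0-\eta(t_0)\|_{\T^2}\leq q/2=\sqrt{2}/(2N)$. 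I expect the main obstacle to be the uniform verification of $r_{k+1}<r_k$ on $(1/2,1]$, equivalently the analytic inequality $m(r)<(4r-3)/(4r)$, which has to be checked from the explicit formula for $\varphi*\varphi$ in \cref{Lem_phi_explicit}; the remainder is routine assembly of earlier lemmas.
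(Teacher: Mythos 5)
Your plan reproduces the paper's own argument almost verbatim: the same localising function, the same use of Poisson summation together with the sign pattern of $\hat{\psi}$, the same reduction to a per-pair inequality $\psi(0)-\tilde{\psi}(t_0-\eta(t_0))<\psi(0)/2$ (the paper obtains it from the eigendecomposition of the $2\times 2$ blocks, you from the polarisation identity $2\Re(a\bar b)=|a|^2+|b|^2-|a-b|^2$, which is the same computation), and exactly the recursion $r_{k+1}=3/\bigl(4(1-m(r_k))\bigr)$ with intended limit $1/2$. Your constants $\hat{\psi}(0)=\pi^2/N^2$ and $\psi(0)=3\pi^2/4$ are correct, and the application of \cref{Lem_lower_bound_for_difference} with $a=r_k$ is legitimate at each step as long as $q/2<\|t_0-\eta(t_0)\|_{\T^2}\le r_kq$ with $r_k\in(1/2,1]$.

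The gap is the convergence step, which is precisely the content of the lemma's proof in the paper. A Taylor expansion of $F(r)=3/\bigl(4(1-m(r))\bigr)$ at $r=1/2$ only shows that $F$ is a local contraction near $1/2$; it controls neither the iterates started at $r_0=1$ nor the absence of a fixed point elsewhere in $(1/2,1]$, and "strict contraction" is not the property you actually need. What must be verified --- and what you explicitly defer as the "main obstacle" --- is the uniform inequality $m(r)<(4r-3)/(4r)$, i.e.\ $F(r)<r$ on all of $(1/2,1]$, together with the invariance $F\bigl((1/2,1]\bigr)\subset[1/2,1)$ so that \cref{Lem_bound_conv} and \cref{Lem_lower_bound_for_difference} stay applicable at every iteration; without this the sequence $(r_k)$ could a priori stall at a fixed point strictly above $1/2$ and the lemma would not follow. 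The paper closes exactly this point: interval invariance comes from convexity of $\varphi*\varphi$ on $[q/2,q]$ and the mean value theorem (the chord slope lies between $(\varphi*\varphi)'(q/2)=-1/2$ and $(\varphi*\varphi)'(q)=0$), monotonicity $a_{k+1}\le a_k$ is proved by checking the sign of the explicit auxiliary function $\gamma(t)=-\tfrac43 t^2+\tfrac{19}{12}t-\tfrac12+\tfrac43\tfrac tq(\varphi*\varphi)(qt)$ on $(1/2,1]$, and only then is the limit of the monotone bounded sequence identified with the fixed point $1/2$. So your outline is the right one and all quantitative ingredients are in place, but as written it does not contain a proof of the decisive analytic step.
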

\begin{proof}[Proof of \cref{Thm_l2_lower_bound_2D}]
\begin{figure}[ht]
    \centering
    \begin{tikzpicture}[>=triangle 45,x=6cm,y=6cm,scale=1]
    \draw[->] (0,0) --node[below]{$x_1$} (1.1,0);
    \draw[->] (0,0) node[below]{0} -- node[left]{$x_2$} (0,1.1);
    \draw[-] (1,-2pt) node[below]{1} -- (1,2pt);
    \draw[-] (-2pt,1) node[left]{1} -- (2pt,1);
    \foreach \x in {(0.15,0.25),(0.5,0.45),(0.8,0.2)}{
    \draw[mark=*,mark size=2pt,mark options={color=blue}] plot coordinates {\x};
    \draw[color=black, very thick] {\x} circle (4pt);
    }
    \foreach \x in {(0.1,0.8)}{
    \draw[mark=*,mark size=2pt,mark options={color=blue}] plot coordinates {\x};
    \draw[color=red, dashed, very thick] {\x} circle (4pt);
    }
    \foreach \x in {(0.18,0.22),(0.55,0.47),(0.78,0.17)}{
    \draw[mark=triangle*,mark size=2pt,mark options={color=green}] plot coordinates {\x};
    \draw[color=black!100, dotted, very thick] {\x} circle (4pt);
    }
    \foreach \x in {(0.33,0.77),(0.9,0.6)}{
    \draw[mark=triangle*,mark size=2pt,mark options={color=green}] plot coordinates {\x};
    \draw[color=red, dashed, very thick] {\x} circle (4pt);
    }
    \draw[|-|] (0.9,1) -- node[below]{$\frac{\sqrt{2}}{N}$} (1,1);
    \node at (0.13,0.25)[above]{\small $t$};
    \node at (0.23,0.21)[below]{\small $\eta(t)$};
    \draw[mark=*,mark size=2pt,mark options={color=blue}] plot coordinates {(1.4,0.8)} node[right]{\,$Y^{\hat{\mu}_1}$};
    \draw[mark=triangle*,mark size=2pt,mark options={color=green}] plot coordinates {(1.4,0.7)} node[right]{\,$Y^{\hat{\mu}_2}$};
    \draw[color=black!100,dotted, very thick] (1.4,0.5) circle (4pt) node[right]{\,$Y_2$}; 
    \draw[color=black!100, very thick] (1.4,0.6) circle (4pt) node[right]{\,$Y_1$};
    \draw[color=red, dashed, very thick] (1.4,0.4) circle (4pt) node[black, right]{\,$Y_3$};
    \end{tikzpicture}
    \caption{Visualisation of the sets $Y^{\hat{\mu}_1}$ (blue circles) and $Y^{\hat{\mu}_2}$ (green triangles) as well as their subsets $Y_1\subset Y^{\hat{\mu}_1}$ (densely black circles) and $Y_2\subset Y^{\hat{\mu}_2}$ (dotted black circles). Nodes without a neighbour closer than $\frac{\sqrt{2}}{N}$ belong to $Y_3$ (dashed red circles).}
    \label{fig:vis_of_sets_Yj}
\end{figure}
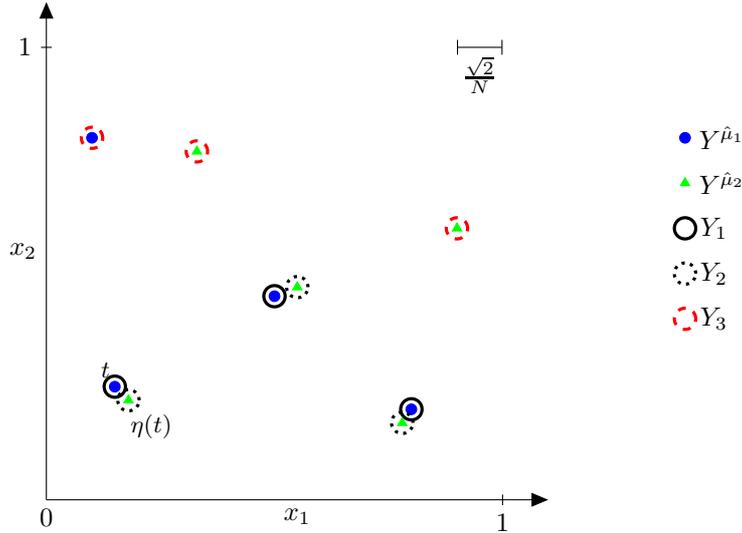
Let $q=\frac{\sqrt{2}}{N}$. Due to the condition on the separation of $\hat{\mu}_1$ and $\hat{\mu}_2$, we know that for every $t\in Y^{\hat{\mu}_1}$ there is at most one $t'\in Y^{\hat{\mu}_2}$ with $\|t-t'\|_{\T^2}<q$.\footnote{If there would be $t\in Y^{\hat{\mu}_1}$, $t_1,t_2\in Y^{\hat{\mu}_2}, t_1\neq t_2$ with $\|t-t_1\|_{\T^2}<q$ and $\|t-t_2\|_{\T^2}<q$, we have $\|t_1-t_2\|_{\T^2}<2q$ which is a contradiction to the assumption $\sep\,Y^{\hat{\mu}_2}\geq 2q$.}  We decompose the joint node set $Y:=Y^{\hat{\mu}_1}\cup Y^{\hat{\mu}_2}$ into $Y_1\subset Y^{\hat{\mu}_1}$, $Y_2\subset Y^{\hat{\mu}_2}$ and $Y_3\subset Y^{\hat{\mu}_1}\cup Y^{\hat{\mu}_2}$ with:\footnote{We adapted this construction from \cite[Thm.\,3.6]{Diederichs_19}.}

\begin{itemize}
    \item [(i)] For all $t,t'\in Y_j$ with $t\neq t'$ we have $\|t-t'\|_{\T^2}\geq \frac{\sqrt{2}}{N}$.
    \item [(ii)] For all $t\in Y_1$ there is exactly one $\eta(t)\in Y_2$ with $\|t-\eta(t)\|_{\T^2}<\frac{\sqrt{2}}{N}$.
    \item [(iii)] $Y_3:=\left\{t\in Y: \text{For all } t'\in Y \text{ with } t\neq t' \text{ one has } \|t-t'\|_{\T^2}\geq \frac{\sqrt{2}}{N}\right\}$ 
\end{itemize}

As one can compute that $\hat{\varphi}\in O\left(v^{-3}\right)$, the functions $\psi$ (having compact support) and $\hat{\psi}$ decrease fast enough in order to apply the Poisson summation formula. This together with \cref{Prop_psi_generell} and the convention
\begin{align*}
    \tilde{c}_t=
    \begin{cases}
    c_t^{(1)}, &\quad t\in Y^{\hat{\mu}_1}, \\
    -c_t^{(2)}, &\quad t\in Y^{\hat{\mu}_2},
    \end{cases}
\end{align*}
gives
\begin{align*}
    \hat{\psi}(0) \sum_{\genfrac{}{}{0pt}{}{k\in\Z^2}{\|k\|_2\leq N}} |\hat{\mu}_1(k)-\hat{\mu}_2(k)|^2 &= \max_{v\in\R^2} \hat{\psi}(v) \sum_{\genfrac{}{}{0pt}{}{k\in\Z^2}{\|k\|_2\leq N}} |\hat{\mu}_1(k)-\hat{\mu}_2(k)|^2 \\
    &\geq \sum_{k\in\Z^2} |\hat{\mu}_1(k)-\hat{\mu}_2(k)|^2 \hat{\psi}(k) \\
    &= \sum_{t,t'\in Y} \tilde{c}_t \overline{\tilde{c}_{t'}} \sum_{k\in\Z^2} \eim{k(t-t')} \hat{\psi}(k) \\
    &= \sum_{t,t'\in Y} \tilde{c}_t \overline{\tilde{c}_{t'}} \sum_{\ell\in\Z^2} \psi(t-t'+\ell) \\
    &= \psi(0) \sum_{t\in Y_3} |\tilde{c}_t|^2 + \sum_{t\in Y_1} \mathbf{c}_t^* \mathbf{A}_t \mathbf{c}_t
\end{align*}
where
\begin{align*}
    \mathbf{c}_t=\begin{pmatrix} \tilde{c}_t \\ \tilde{c}_{\eta(t)} \end{pmatrix} = \begin{pmatrix} c_t^{(1)} \\ -c_{\eta(t)}^{(2)} \end{pmatrix} \quad \text{,} \quad \mathbf{A}_t= \begin{pmatrix} \psi(0) & \psi(|t-\eta(t)|_{\T^2}) \\ \psi(|t-\eta(t)|_{\T^2}) & \psi(0) \end{pmatrix}
\end{align*}
and
\begin{align*}
    |t-t'|_{\T^d}=\left(\min_{\ell_1\in\Z} |t_1-t_1'+\ell_1|, \min_{\ell_2\in\Z} |t_2-t_2'+\ell_2|,\dots, \min_{\ell_d\in\Z} |t_d-t_d'+\ell_d|\right)^\top\in \left[0,\frac{1}{2}\right]^d.
\end{align*}
Using assumption \eqref{eq_error_small}, we obtain
\begin{align*}
    \frac{3}{4} N^2 c_{\min}^2 &> \sum_{\genfrac{}{}{0pt}{}{k\in\Z^2}{\|k\|_2\leq N}} |\hat{\mu}_1(k)-\hat{\mu}_2(k)|^2 \geq \frac{\psi(0)}{\hat{\psi}(0)} c_{\min}^2  |Y_3|\\
    &= \frac{\frac{3}{4}\pi^2}{4\pi^2 N^2 \hat{\varphi}(0)^4} c_{\min}^2 |Y_3| = \frac{3}{16 N^2 (q/2)^4} c_{\min}^2 |Y_3| = \frac{3}{2 q^2} c_{\min}^2 |Y_3|
\end{align*}
and this is equivalent to
\begin{align*}
    |Y_3| < \frac{2 q^2}{4} N^2 = 1
\end{align*}
meaning $Y_3=\emptyset$. So we know already that for all $t\in Y_1=Y^{\hat{\mu}_1}$ there is $t'=\eta(t)\in Y_2=Y^{\hat{\mu}_2}$ with $\|t-\eta(t)\|_{\T^2}<q=\frac{\sqrt{2}}{N}$. The eigendecomposition of the matrices $\mathbf{A}_t$ leads then to
\begin{align}
     \hspace{-0.2cm}\hat{\psi}(0) \hspace{-0.3cm}\sum_{\genfrac{}{}{0pt}{}{k\in\Z^2}{\|k\|_2\leq N}} |\hat{\mu}_1(k)-\hat{\mu}_2(k)|^2 &\geq \sum_{t\in Y_1} \left(\psi(0)-\psi(|t-\eta(t)|_{\T^2})\right) \mathbf{c}_t^* \frac{1}{\sqrt{2}}\begin{pmatrix} 
     1 \\ -1
     \end{pmatrix}
     \frac{1}{\sqrt{2}}
     \begin{pmatrix} 
     1 & -1
     \end{pmatrix}
     \mathbf{c}_t \nonumber\\
     & \quad \quad+ \left(\psi(0)+\psi(|t-\eta(t)|_{\T^2})\right) \mathbf{c}_t^* \frac{1}{\sqrt{2}}\begin{pmatrix} 
     1 \\ 1
     \end{pmatrix}
     \frac{1}{\sqrt{2}}
     \begin{pmatrix} 
     1 & 1
     \end{pmatrix}
     \mathbf{c}_t \nonumber\\
     &= \sum_{t\in Y_1}  \frac{1}{2} \left\{\psi(0)-\psi(|t-\eta(t))|_{\T^2}\right\} \left|c^{(1)}_t+c_{\eta(t)}^{(2)}\right|^2 \nonumber\\
     &\quad + \frac{1}{2} \left\{\psi(0)+\psi(|t-\eta(t)|_{\T^2})\right\} \left|c^{(1)}_t-c_{\eta(t)}^{(2)}\right|^2 \nonumber\\
     &= \sum_{t\in Y_1} \left(\psi(0)-\psi(|t-\eta(t)|_{\T^2}\right) \|\mathbf{c}_t\|_2^2 + \psi(|t-\eta(t)|_{\T^2}) \left|c^{(1)}_t-c_{\eta(t)}^{(2)}\right|^2. \label{eq_eigendecomposition}
\end{align}
In order to bound $\psi(|t-\eta(t)|_{\T^2})$ in the second term from below, we use \cref{Lem_sequence_of_bounds} and compute
\begin{align*}
    \psi(|t-\eta(t)|_{\T^2})\geq \psi\left(\frac{q}{2},\frac{q}{2}\right)=\frac{\pi^2}{16}.
\end{align*}
By application of \cref{Lem_lower_bound_for_difference} we finally conclude
\begin{align*}
    \sum_{\genfrac{}{}{0pt}{}{k\in\Z^2}{\|k\|_2\leq N}} |\hat{\mu}_1(k)-\hat{\mu}_2(k)|^2 &\geq \frac{1}{\hat{\psi}(0)} \sum_{t\in Y^{\hat{\mu}_1}} \frac{5\pi^2}{4q^2} \|t-\eta(t)\|_{\T^2}^2 \|\mathbf{c}_t\|_2^2 + \frac{\pi^2}{16} \left|c^{(1)}_t-c_{\eta(t)}^{(2)}\right|^2\\
    &=\frac{5}{4} N^4 c_{\min}^2 \sum_{t\in Y^{\hat{\mu}_1}} \|t-\eta(t)\|_{\T^2}^2 + \frac{1}{16} N^2 \sum_{t\in Y^{\hat{\mu}_1}} |c_t^{(1)}-c_{\eta(t)}^{(2)}|^2
\end{align*}
and this was the second claim.
\end{proof}

\subsection{Local results in higher dimension} \label{subsec_multi}
As the main result of this subsection, we extend our ideas from the previous subsection and obtain a local Lipschitz result in arbitrary dimension $d$.
\begin{Thm}[Local Lipschitz]
\label{Thm_l2_lower_bound_higherD}
Let $d\geq 2$ and $\hat{\mu}_1,\hat{\mu}_2\in \widehat{\mathcal{M}}^{N,d}_{c_{\min}}(2\frac{\sqrt{d}}{N})$ and assume that $\hat{\mu}_1,\hat{\mu}_2$ satisfy
\begin{align} \label{eq_error_small_higherD}
    \sum_{\genfrac{}{}{0pt}{}{k\in\Z^d}{\|k\|_2\leq N}} |\hat{\mu}_1(k)-\hat{\mu}_2(k)|^2 < \left(\frac{3}{2}\right)^{d-1} \frac{N^d c_{\min}^2}{d^{d/2}}.
\end{align}
Then, for every $t\in Y^{\hat{\mu}_1}$ there is exactly one $t'=\eta(t)\in Y^{\hat{\mu}_2}$ with $\|t-\eta(t)\|_{\T^d}\leq \frac{\sqrt{d}}{2N}$ and vice versa. Moreover, we have the following estimate:
\begin{align*}
    \sum_{\genfrac{}{}{0pt}{}{k\in\Z^d}{\|k\|_2\leq N}} |\hat{\mu}_1(k)-\hat{\mu}_2(k)|^2 \geq  \sum_{t\in Y_1} 10 \left(\frac{3}{2}\right)^{d-2} \frac{d-1}{d^{d/2} d^2} N^{d+2} c_{\min}^2 \|t-\eta(t)\|_{\T^d}^2 + \frac{2}{4^d d^{d/2}}  N^d  |c_t^{(1)}-c_{\eta(t)}^{(2)}|^2.
\end{align*}
\end{Thm}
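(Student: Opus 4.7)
The plan is to mirror the proof of \cref{Thm_l2_lower_bound_2D} step by step, replacing the bivariate localising function by its $d$-dimensional analogue from \cref{Prop_psi_generell} with $p=2$, window $\varphi$ as in \eqref{eq_cos_squared} and bandwidth $q=\sqrt{d}/N$. This choice makes the prefactor $(2\pi N)^2-4d\pi^2/q^2$ of the pure tensor product term vanish --- the direct $d$-dimensional analogue of the condition $Nq=\sqrt{2}$ used in \cref{Lem_phi_explicit} --- leaving
\begin{align*}
\psi(x)=\frac{\pi^2}{q^2}\sum_{s=1}^{d}\left(\frac{q}{2\pi}\sin\frac{2\pi|x_s|}{q}+q-|x_s|\right)\prod_{\ell\neq s}(\varphi*\varphi)(x_\ell),
\end{align*}
with $\psi(0)=d\pi^2 q^{-1}(3q/8)^{d-1}$ and $\hat\psi(0)=4\pi^2 N^2(q/2)^{2d}$. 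A direct computation gives $\psi(0)/\hat\psi(0)=(3/2)^{d-1}N^d/d^{d/2}$, which is exactly the threshold appearing in \eqref{eq_error_small_higherD}.

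The first half of the proof then carries over verbatim. I would decompose the combined node set into $Y_1,Y_2,Y_3$ as in \cref{fig:vis_of_sets_Yj}, apply Poisson summation together with the sign properties of $\hat\psi$ to obtain
\begin{align*}
\hat\psi(0)\sum_{\|k\|_2\leq N}|\hat\mu_1(k)-\hat\mu_2(k)|^2 \geq \psi(0)\,c_{\min}^2\,|Y_3|+\sum_{t\in Y_1}\mathbf{c}_t^*\mathbf{A}_t\mathbf{c}_t,
\end{align*}
use the hypothesis \eqref{eq_error_small_higherD} to force $Y_3=\emptyset$, and reduce the $2\times 2$ quadratic form through its eigendecomposition (as in \eqref{eq_eigendecomposition}) to arrive at the intermediate bound $\sum_{t\in Y_1}(\psi(0)-\psi(|t-\eta(t)|_{\T^d}))\|\mathbf{c}_t\|_2^2+\psi(|t-\eta(t)|_{\T^d})|c_t^{(1)}-c_{\eta(t)}^{(2)}|^2$.

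Two pointwise estimates on $\psi$ then close the argument. First, singling out the coordinate $s^*$ attaining $\|x\|_\infty$ and applying the quadratic bound on $(\varphi*\varphi)(x_{s^*})$ from \cref{Lem_bound_conv} while replacing all other factors by their maxima ($\tfrac{q}{2\pi}\sin(2\pi|x_s|/q)+q-|x_s|\leq q$ and $\varphi*\varphi(x_\ell)\leq 3q/8$) yields
\begin{align*}
\psi(0)-\psi(x)\geq \frac{5(d-1)\pi^2}{4q^2}\left(\frac{3q}{8}\right)^{d-2}\|x\|_\infty^2\qquad\text{for }\|x\|_\infty\leq q/2.
\end{align*}
Second, because each summand of $\psi$ is coordinatewise decreasing on $[0,q/2]^d$, its minimum on that cube sits at the corner $(q/2,\dots,q/2)$, producing $\psi(|t-\eta(t)|_{\T^d})\geq d\pi^2 q^{d-2}/(2\cdot 16^{d-1})$. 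Dividing by $\hat\psi(0)$, inserting $q=\sqrt{d}/N$ and using $\|\mathbf{c}_t\|_2^2\geq 2c_{\min}^2$ recovers exactly the two asserted constants after cancellation.

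The main obstacle I anticipate is the bootstrapping: to apply the corner estimate on $\psi$ one needs $\|t-\eta(t)\|_{\T^d}\leq q/2$, whereas the $Y_3=\emptyset$ argument alone only delivers $<q$. For $d=2$ this tightening is precisely \cref{Lem_sequence_of_bounds}, proved by iteratively feeding an intermediate form of the estimate back into itself. The higher-dimensional version should follow by the same scheme, but its bookkeeping is the delicate point, because the final constant in front of $|c_t^{(1)}-c_{\eta(t)}^{(2)}|^2$ depends on being able to evaluate $\psi$ precisely at the corner of the admissible box and therefore on a sharp pairing bound.
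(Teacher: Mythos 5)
Your proposal follows essentially the same route as the paper's proof: same localising function with $q=\sqrt{d}/N$, same decomposition into $Y_1,Y_2,Y_3$, same Poisson-summation and eigendecomposition machinery, and the pointwise estimates you derive (the quadratic decay bound $\psi(0)-\psi(x)\ge\frac{5(d-1)\pi^2}{4q^2}(3q/8)^{d-2}\|x\|_\infty^2$ for $\|x\|_\infty\le q/2$, and the corner value $\psi(q/2,\dots,q/2)=\frac{d\pi^2 q^{d-2}}{2\cdot 16^{d-1}}$) agree exactly with the paper's \cref{Lem_psi_highD} and the substitutions $\hat\psi(0)=4\pi^2N^2(q/2)^{2d}$, $\|\mathbf{c}_t\|_2^2\ge 2c_{\min}^2$. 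All the constant-tracking is right.

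The one step you flag but do not carry out — the bootstrapping that upgrades $\|t-\eta(t)\|_{\T^d}<q$ (from $Y_3=\emptyset$) to $\|t-\eta(t)\|_{\T^d}\le q/2$ — is indeed the missing piece, and your instinct about how to fill it is exactly what the paper does. The higher-dimensional analogue of \cref{Lem_sequence_of_bounds} uses the linear piece of the lower bound in \cref{Lem_psi_highD} on $\|x\|_\infty\in[q/2,aq]$, which feeds into the explicit recursion $a_0=1$, $a_{k+1}=\frac{3d}{5d-2-8m_k(d-1)}$ (with $m_k$ the difference quotient from \eqref{eq_Diff_Quotient} evaluated at $a_kq$), and one checks $a_k\downarrow 1/2$ by the same convexity/monotonicity argument as in the bivariate lemma. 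Since the final constants rely only on the limit $a_\infty=1/2$ (i.e., evaluating $\psi$ at the box corner) and not on the precise speed of convergence, the bookkeeping you worried about is benign and would not change any of the stated constants. So the proposal is correct in approach, and the acknowledged gap is exactly where the paper invests its remaining effort.
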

Note that the univariate case $d=1$ is not included in this theorem because our approach needed a slightly larger separation than $2\frac{\sqrt{d}}{N}=\frac{2}{N}$ in \cref{Thm_Lipschitz_univariate}. For the proof, we use the same techniques as before and a tensor product approach by defining
\begin{align*}
    \psi(x)=\psi(x_1,\dots,x_d)=\left((2\pi N)^2 + \sum_{s=1}^d \frac{\partial^2}{\partial x_s^2}\right) \bigotimes_{\ell=1}^d (\varphi *\varphi)(x_\ell)
\end{align*}
as in \cref{Prop_psi_generell} and using 
\begin{align*}
     \varphi(x)=\begin{cases} \cos^2\left(\frac{\pi x}{q}\right), &\quad |x|<\frac{q}{2},\\ 0, &\quad \text{otherwise.} \end{cases}
\end{align*}
as before. Note that we directly restricted ourselves to $p=2$ even if the idea should also work for $p\neq 2$ by finding a suitable $\varphi$.
\begin{Lem} \label{Lem_psi_highD}
If $d\geq 2$, the function $\psi$ satisfies
\begin{align*}
    \psi(x)&=\left[(2\pi N)^2-\frac{4d\pi^2}{q^2}\right] \prod_{\ell=1}^d (\varphi*\varphi)(x_\ell) 
    +\frac{\pi^2}{q^2} \left[\sum_{s=1}^d \left(\frac{q}{2\pi} \sin\left(\frac{2\pi|x_s|}{q}\right)+q-|x_s|\right) \prod_{i\neq s} (\varphi*\varphi)(x_i) \right]
\end{align*}
and has its global maximum in 0 if and only if $Nq\geq \sqrt{d}$. We set $q=\frac{\sqrt{d}}{N}$ and find 
\begin{align}
    \psi(0)-\psi(x)&\geq \begin{cases}  \frac{10}{3}\left(d-1\right) \left(\frac{3q}{8}\right)^{d-1} \pi^2 \frac{\|x\|_{\infty}^2}{q^3}, &\quad \|x\|_{\infty}<\frac{q}{2}, \\
     \left[\frac{5d}{6}-\frac{1}{3}-\frac{4}{3}m(d-1)\right]\left(\frac{3q}{8}\right)^{d-1} \pi^2 \frac{\|x\|_{\infty}}{q^2} , &\quad \frac{q}{2}\leq \|x\|_{\infty} \leq aq,
    \end{cases} \nonumber\\
    &\geq \left(d-\frac{1}{2}\right) \left(\frac{3q}{8}\right)^{d-1} \pi^2 \frac{\|x\|_{\infty}^2}{q^3}, \quad 0\leq \|x\|_{\infty} \leq q \label{eq_low_bound_general_x}
\end{align}
where $a\in\left(\frac{1}{2},1\right]$ and $m$ as in \eqref{eq_Diff_Quotient}.
\end{Lem}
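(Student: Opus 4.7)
The plan naturally splits into four short movements. First, I would derive the explicit formula by applying the product rule
$\partial^2_{x_s}\prod_{\ell=1}^d(\varphi*\varphi)(x_\ell)=(\varphi*\varphi)''(x_s)\prod_{\ell\neq s}(\varphi*\varphi)(x_\ell)$
and inserting the closed form
$(\varphi*\varphi)''(x)=-\tfrac{4\pi^2}{q^2}(\varphi*\varphi)(x)+\tfrac{\pi^2}{q^2}\bigl(\tfrac{q}{2\pi}\sin(2\pi|x|/q)+q-|x|\bigr)$
established in \cref{Lem_phi_explicit}. Summing over $s$ merges the $-\tfrac{4\pi^2}{q^2}(\varphi*\varphi)(x_s)$ contributions with $(2\pi N)^2$ to form the prefactor $(2\pi N)^2-\tfrac{4d\pi^2}{q^2}$ in front of $\prod_\ell(\varphi*\varphi)(x_\ell)$, while the remaining sine-plus-linear pieces produce the bracketed sum in the claim.

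Second, I would establish the maximum-at-zero statement. The ``if'' direction follows from the $d$-dimensional analogue of \eqref{eq_key_observe}, which bounds each factor $\tfrac{q}{2\pi}\sin(2\pi|x_s|/q)+q-|x_s|\leq q$, combined with $(\varphi*\varphi)(x_\ell)\leq(\varphi*\varphi)(0)=\tfrac{3q}{8}$. If in addition $Nq\geq\sqrt{d}$, the prefactor $(2\pi N)^2-\tfrac{4d\pi^2}{q^2}$ is non-negative so the first term is also maximized at the origin, yielding $\psi(x)\leq\psi(0)$. For the converse, I would evaluate $\psi$ along the diagonal $x=(\epsilon,\dots,\epsilon)$ or inspect $\partial^2_{x_i}\psi(0)$: a negative prefactor contributes a positive term to the Hessian at the origin that dominates the negative one coming from the bracketed sum, ruling out the local (and hence global) maximum.

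Third, I would specialize to $q=\sqrt{d}/N$, which annihilates the leading coefficient and leaves
\[
\psi(0)-\psi(x)=\tfrac{\pi^2}{q^2}\sum_{s=1}^d\Bigl[q(\varphi*\varphi)(0)^{d-1}-g(x_s)\prod_{\ell\neq s}(\varphi*\varphi)(x_\ell)\Bigr],
\]
where $g(y):=\tfrac{q}{2\pi}\sin(2\pi|y|/q)+q-|y|$. Picking an index $s^*$ with $|x_{s^*}|=\|x\|_{\infty}$, I would treat the two types of summand separately. For $s=s^*$ I would bound $\prod_{\ell\neq s^*}(\varphi*\varphi)(x_\ell)\leq(\varphi*\varphi)(0)^{d-1}$ and use the elementary inequality $q-g(y)=|y|-\tfrac{q}{2\pi}\sin(2\pi|y|/q)\geq|y|$, valid on $[q/2,q]$ because $\sin(2\pi|y|/q)\leq 0$ there. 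For each $s\neq s^*$ I would use $g(x_s)\leq q$ together with the factorisation $\prod_{\ell\neq s}(\varphi*\varphi)(x_\ell)\leq(\varphi*\varphi)(0)^{d-2}(\varphi*\varphi)(x_{s^*})$, reducing the corresponding contribution to $q(\varphi*\varphi)(0)^{d-2}\bigl[(\varphi*\varphi)(0)-(\varphi*\varphi)(x_{s^*})\bigr]$. Feeding in the two estimates for $(\varphi*\varphi)(0)-(\varphi*\varphi)(x_{s^*})$ from \cref{Lem_bound_conv} (quadratic on $[0,q/2)$, linear on $[q/2,aq]$) and simplifying with $(\varphi*\varphi)(0)=\tfrac{3q}{8}$ delivers the two case-wise bounds.

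The consolidated quadratic estimate \eqref{eq_low_bound_general_x} is then checked case by case. For $\|x\|_{\infty}<q/2$ it reduces to the elementary inequality $\tfrac{10(d-1)}{3}\geq d-\tfrac12$, which already holds for $d\geq 2$. For $\|x\|_{\infty}\in[q/2,q]$ I would weaken the linear bound to a quadratic one via $\|x\|_{\infty}\leq q$ and verify that the resulting coefficient $\tfrac{5d}{6}-\tfrac13-\tfrac43m(d-1)$ dominates $d-\tfrac12$, which amounts to $m\leq-\tfrac18$; the choice $a=1$ realises this with equality since $(\varphi*\varphi)(q)=0$ and $(\varphi*\varphi)(q/2)=q/16$ give precisely $m=-\tfrac18$. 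The hard part is the bookkeeping of constants in the second case: the single $s=s^*$ contribution produces the unique term linear in $\|x\|_{\infty}$ (through $q-g$), whereas the $(d-1)$ off-index contributions carry the quadratic $(\varphi*\varphi)$-decay, and these two heterogeneous pieces must be balanced so that after the downgrade from linear to quadratic the final constant is still sharp enough to dominate $d-\tfrac12$.
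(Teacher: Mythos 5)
Your derivation of the closed form, the ``if'' direction of the global--maximum claim (non-negative prefactor together with \eqref{eq_key_observe}), and the two-regime estimate for $\psi(0)-\psi(x)$ all follow the paper's route: at $q=\sqrt d/N$ the leading coefficient vanishes, the $d$-term sum is split into the index $s^*$ realising $\|x\|_\infty$ and the remaining $d-1$ indices, and \cref{Lem_bound_conv} is fed into the off-index terms. Your bound $q-g(x_{s^*})\ge\|x\|_\infty$ (using $\sin\le 0$ on $[q/2,q]$) is even a little sharper than the paper's $g(x_{s^*})\le q/2$, so when you claim it ``delivers'' the stated constant $\tfrac{5d}{6}-\tfrac13-\tfrac43 m(d-1)$ the arithmetic actually produces the larger $\tfrac{5d+1}{6}-\tfrac43 m(d-1)$; that still dominates $d-\tfrac12$, so the consolidated bound \eqref{eq_low_bound_general_x} is unaffected, but the constants you obtain differ from the ones in the statement. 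The consolidation itself ($a=1\Rightarrow m=-\tfrac18$, and $\tfrac{10(d-1)}{3}\ge d-\tfrac12$ for $d\ge 2$) is correct.

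The ``only if'' direction is where your proposal has a genuine gap. Since $g''(0)=0$ and the off-diagonal mixed second partials vanish, the Hessian of $\psi$ at the origin is diagonal with
\[
\partial^2_{x_j}\psi(0)\;=\;-\frac{\pi^4}{4q^2}\left(\frac{3q}{8}\right)^{d-2}\bigl[3(Nq)^2-(d+2)\bigr],
\]
so the positive contribution from the negative prefactor dominates the negative contribution from the bracketed sum only when $3(Nq)^2<d+2$, i.e.\ $Nq<\sqrt{(d+2)/3}$. Because $(d+2)/3<d$ for every $d\ge 2$, this strictly misses the claimed range: on $Nq\in\bigl[\sqrt{(d+2)/3},\sqrt d\bigr)$ the Hessian is negative semidefinite and the second-order test (and likewise the diagonal slice $\psi(\epsilon,\dots,\epsilon)$, which yields the same threshold) is inconclusive. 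The paper instead regroups $\psi(x)=\sum_{s=1}^d h(x_s)\prod_{i\ne s}(\varphi*\varphi)(x_i)$ with $h(t)=4\pi^2\bigl(\tfrac{N^2}{d}-\tfrac{1}{q^2}\bigr)(\varphi*\varphi)(t)+\tfrac{\pi^2}{q^2}g(t)$ and argues through the local minimum of the one-variable function $h$ at the origin for $Nq<\sqrt d$; to prove the lemma as stated you would need to work with that decomposition (and in particular track how the $(d-1)$ factors $(\varphi*\varphi)(x_i)$ move against $h$) rather than inspecting the Hessian of $\psi$ itself.
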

\begin{proof}
The representation of $\psi$ is a direct corollary of \cref{Lem_phi_explicit}. Additionally, the global maximality of $\psi$ in 0 for $Nq\geq \sqrt{d}$ can be proven with the same estimates as in the proof of \cref{Lem_phi_explicit} by noting that the condition $Nq\geq \sqrt{d}$ implies $(2\pi N)^2-\frac{4d\pi^2}{q^2}\geq 0$. In order to understand why the condition is also necessary, we rewrite
\begin{align*}
    \psi(x)=\sum_{s=1}^d h(x_s) \prod_{i\neq s} (\varphi*\varphi)(x_i)
\end{align*}
where $h: [-q,q] \to \R$,
\begin{align*}
    h(t)=4\pi^2\left(\frac{N^2}{d}-\frac{1}{q^2}\right)\left(\frac{q-|t|}{4} \left[1+\frac{1}{2}\cos\frac{2\pi t}{q}\right] + \frac{3}{8} \frac{q}{2\pi} \sin\frac{2\pi |t|}{q}\right)+\frac{\pi^2}{q^2} \left(\frac{q}{2\pi}\sin\frac{2\pi|t|}{q}+q-|t|\right).
\end{align*}
Expanding $h$ into a power series, one finds
\begin{align*}
    h(t)=\frac{3q\pi^2}{2}\left(\frac{N^2}{d}-\frac{1}{3q^2}\right) - \left(\frac{N^2}{d}-\frac{1}{q^2}\right)\frac{q\pi^2}{4} \left(\frac{2\pi t}{q}\right)^2+\mathcal{O}(|t|^3)
\end{align*}
and therefore we can derive that $h$ has a local minimum in 0 if $Nq<\sqrt{d}$. Thus, there is $t^*\in[-q,q]$ such that $h(0)<h(t^*)$ implying $\psi(t^*\cdot \mathbf{e}_1)>\psi(0)$, i.e. $\psi$ does not have its global maximum in 0.\par
After the estimate
\begin{align*}
    \psi(0)-\psi(x)&\geq \psi(0) -\frac{\pi^2}{q^2} \left[q(\varphi*\varphi)^{d-1}(0)+(d-1)q(\varphi*\varphi)^{d-2}(0)(\varphi*\varphi)(\|x\|_{\infty})\right]
\end{align*}
for $\|x\|_{\infty}\leq \frac{q}{2}$ or
\begin{align*}
    \psi(0)-\psi(x)&\geq \psi(0) -\frac{\pi^2}{q^2} \left[\frac{q}{2}(\varphi*\varphi)^{d-1}(0)+(d-1)q(\varphi*\varphi)^{d-2}(0)(\varphi*\varphi)(\|x\|_{\infty})\right]
\end{align*}
for $\frac{q}{2}\leq \|x\|_{\infty}\leq aq$, we can apply \cref{Lem_bound_conv} in order to obtain the proposed bounds. The last inequality for the general case of $0\leq \|x\|_{\infty} \leq q$ can be deduced by setting $a=1$ and hence $m=-\frac{1}{8}$.
\end{proof}

\begin{proof}[Proof of \cref{Thm_l2_lower_bound_higherD}]
We set $q=\frac{\sqrt{d}}{N}$. Analogously to the proof of \cref{Thm_l2_lower_bound_2D}, condition \eqref{eq_error_small_higherD} is chosen such that $Y_3=\emptyset$. As in the proof of \cref{Lem_sequence_of_bounds}, we first bound $\psi(0)-\psi(x)$ for $\|x\|_{\infty}\in \left[\frac{q}{2},q\right]$ in order to shrink the maximally possible distance $\|t^*-\eta(t^*)\|_{\T^d}$. Proceeding iteratively, we end up with $\|t^*-\eta(t^*)\|_{\T^d}\leq a_k q$ where $a_0=1$,
\begin{align*}
    a_{k+1}=\begin{cases}
    \frac{3d}{5d-2-8m_k(d-1)}, &\quad a_k>\frac{1}{2}, \\
    \frac{1}{2}, &\quad \text{else,}
    \end{cases} \quad \text{and} \quad m_k=\frac{(\varphi*\varphi)(a_kq)-(\varphi*\varphi)(q/2)}{a_kq-q/2}.
\end{align*}
Again, one can show $a_k\to \frac{1}{2}$ and this proves the first claim. The second one follows by applying \cref{Lem_psi_highD} and using  
\begin{align*}
    \psi(|t-\eta(t)|_{\T^d})\geq \psi\left(\frac{q}{2},\frac{q}{2},\dots,\frac{q}{2}\right) = \frac{8d\pi^2}{q^2} \left(\frac{q}{16}\right)^d
\end{align*}
as well as $\hat{\psi}(0)=4\pi^2 N^2 (q/2)^{2d}$.
\end{proof}

\subsection{Global results}
Next, we want to formulate global results which show actually that the Prony-map is Lipschitz continuous meaning that we have to fix explicit metrics on the input and output space of the Prony map. While the $\ell^2$-norm on the space of moments $\C^\mathcal{B}$ is natural from our previous local results, a metric on the parameter space is not completely obvious. But since we want to compare differences in the node set and the weights simultaneously, we defined the output of the Prony-map as a probability-like measure. The following well-known metric on the space of probability measures developed in the field of \textit{optimal transport} (e.g.\,cf.\,\cite{Villani_09}) can be extended to complex probability-like measures:\footnote{Even if \cite[p.\,98]{Peyre_19} mentions that the Wasserstein distance can be extended to signed measures with equal mass through the Kantorovich-Rubenstein representation and \cite[eq.\,(43)]{Lev_16} generalises the set of transport plans $\pi$ to complex measures, it is not entirely clear if a generalisation of the Wasserstein distance to complex measures $\mu_1,\mu_2$ has been considered before.}
\begin{Def}[Wasserstein distance for probability-like measures] \label{Def_Wasserstein}
Let $\mu_1,\mu_2\in\mathcal{M}(\T^d)$ for any $d\in\N_+$. We denote the set of complex measures on $\T^d\times \T^d$ with marginals $\mu_1$ and $\mu_2$ by $\Pi(\mu_1,\mu_2)$ and call $\pi\in\Pi(\mu_1,\mu_2)$ a \emph{transport plan}.\footnote{This means that $\pi\in \Pi(\mu_1,\mu_2)$ satisfies $\pi(A\times \T^d)=\mu_1(A)$ and $\pi(\T^d\times B)=\mu_2(B)$ for all measurable sets $A,B\in \T^d$.} Moreover, we define for some Lipschitz continuous function $f:\T^d \to \R$ its optimal Lipschitz constant as 
\begin{align*}
    \Lip(f)=\sup_{x\neq y} \frac{|f(x)-f(y)|}{\|x-y\|_{\T^d}}.
\end{align*}
The \emph{1-Wasserstein distance} between $\mu_1$ and $\mu_2$ is defined via taking absolute value in the dual formulation of the usual Wasserstein distance. In other words, we set
\begin{align*}
    W_1(\mu_1,\mu_2)= \sup_{f: \Lip(f)\leq 1} \left|\int_{\T^d} f(x) d(\mu_1-\mu_2)(x)\right|. 
\end{align*}
\end{Def}
In the following lemma, we summarise that this definition generalises the Wasserstein distance to a well-defined metric on $\mathcal{M}(\T^d)$.
\begin{Lem} \label{Lem_Wasserstein_Metrik}
For $\mu_1,\mu_2\in \mathcal{M}(\T^d)$ we have $W_1(\mu_1,\mu_2)<\infty$ and in particular
\begin{align*}
    W_1(\mu_1,\mu_2)\leq \frac{1}{\sqrt{2}} |\mu_1-\mu_2|(\T^d)
\end{align*}
where 
\begin{align*}
    |\mu|(X)=\sup\left\{\sum_{k=1}^{\infty} |\mu(X_k)|, \quad \bigcup_{k\in\N} X_k=X,\,X_k\text{ disjoint}\right\}
\end{align*}
is the \textit{total variation} of $\mu\in\mathcal{M}(X)$ for any topological space $X$ (cf.\,\cite[Chapter 9]{Axler_20}). Moreover, the Wasserstein distance defines a metric on $\mathcal{M}(\T^d)$ induced by the norm
\begin{align*}
    \|\mu\|_{\Lip^*}=\sup_{f: \Lip(f)\leq 1, \|f\|_{\infty}\leq 1} \left|\int_{\T^d} f(x) d\mu(x)\right|. 
\end{align*}
\end{Lem}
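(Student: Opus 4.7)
The plan is to verify the three claims in turn: the upper bound (which also yields finiteness), the metric properties, and the identification $W_1(\mu_1,\mu_2) = \|\mu_1-\mu_2\|_{\Lip^*}$.

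For the upper bound, I would start from an arbitrary $f\colon\T^d\to\R$ with $\Lip(f)\leq 1$. Since $\mu_1$ and $\mu_2$ are both probability-like, $(\mu_1-\mu_2)(\T^d)=0$, so for any constant $c\in\R$ one has $\int f\,d(\mu_1-\mu_2)=\int (f-c)\,d(\mu_1-\mu_2)$. Combining the elementary inequality $|\int g\,d\mu|\leq \|g\|_\infty\,|\mu|(\T^d)$ (which is standard for complex measures via the polar decomposition $d\mu=g\,d|\mu|$ with $|g|=1$) with the choice $c=\tfrac{1}{2}(\sup f+\inf f)$ gives $\|f-c\|_\infty\leq\tfrac{1}{2}(\sup f-\inf f)\leq\tfrac{1}{2}\diam(\T^d)\leq\tfrac{1}{4}$, since $\diam(\T^d)\leq\tfrac{1}{2}$ for the chosen metric. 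Hence $\left|\int f\,d(\mu_1-\mu_2)\right|\leq\tfrac{1}{4}|\mu_1-\mu_2|(\T^d)\leq\tfrac{1}{\sqrt{2}}|\mu_1-\mu_2|(\T^d)$, and taking the supremum over $f$ yields simultaneously finiteness and the claimed bound.

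For the metric properties, nonnegativity is immediate, and symmetry follows because $f\mapsto -f$ preserves the set of 1-Lipschitz functions. The triangle inequality follows from the decomposition $\int f\,d(\mu_1-\mu_3)=\int f\,d(\mu_1-\mu_2)+\int f\,d(\mu_2-\mu_3)$, the triangle inequality in $\C$, and taking suprema. The only non-routine step is positive-definiteness: if $W_1(\mu_1,\mu_2)=0$, then $\int f\,d(\mu_1-\mu_2)=0$ for all 1-Lipschitz $f$. Since Lipschitz functions are uniformly dense in $C(\T^d)$ (e.g.\ by convolving with a smooth mollifier), this extends to all $f\in C(\T^d)$, and the Riesz representation theorem, applied separately to the real and imaginary parts of $\mu_1-\mu_2$, forces $\mu_1=\mu_2$. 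I expect this Riesz step to be the most technical point, since it requires reducing the complex-measure equality to the standard statement for signed Borel measures on $\T^d$.

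Finally, for the identification $W_1(\mu_1,\mu_2)=\|\mu_1-\mu_2\|_{\Lip^*}$, the inequality $\|\mu_1-\mu_2\|_{\Lip^*}\leq W_1(\mu_1,\mu_2)$ is trivial, because every $f$ admissible for $\|\cdot\|_{\Lip^*}$ (satisfying $\Lip(f)\leq 1$ and $\|f\|_\infty\leq 1$) is also admissible for $W_1$. For the reverse direction, given a 1-Lipschitz $f$, let $c=\tfrac{1}{2}(\sup f+\inf f)$ and put $\tilde f:=f-c$. Then $\Lip(\tilde f)=\Lip(f)\leq 1$ and, by the estimate used in the first step, $\|\tilde f\|_\infty\leq\tfrac{1}{4}\leq 1$, so $\tilde f$ is admissible for $\|\cdot\|_{\Lip^*}$. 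The zero-mass condition $(\mu_1-\mu_2)(\T^d)=0$ gives $\int \tilde f\,d(\mu_1-\mu_2)=\int f\,d(\mu_1-\mu_2)$, hence taking suprema yields $W_1(\mu_1,\mu_2)\leq\|\mu_1-\mu_2\|_{\Lip^*}$. This verifies that $W_1$ is precisely the translation-invariant metric induced by the norm $\|\cdot\|_{\Lip^*}$ on differences of probability-like measures.
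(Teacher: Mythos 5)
Your proposal is correct, and it reaches the total-variation bound by a genuinely different and in fact sharper route than the paper. The paper separates $\mu_1-\mu_2$ into real and imaginary parts, bounds each Wasserstein term by an explicit transport plan built from the Jordan decomposition (in the spirit of Villani's Theorem 6.15), estimates the transport cost using $\|x-x_0\|_{\T^d}\le\tfrac12$, and then combines the two parts via $|\Re z|+|\Im z|\le\sqrt2\,|z|$, which is where the constant $\tfrac{1}{\sqrt2}$ comes from. You instead exploit directly that $(\mu_1-\mu_2)(\T^d)=0$, subtract the midrange constant from a $1$-Lipschitz $f$, and use $\bigl|\int g\,d\nu\bigr|\le\|g\|_\infty|\nu|(\T^d)$ together with $\diam(\T^d)=\tfrac12$; this avoids any transport-plan construction and Jordan decomposition and yields the constant $\tfrac14$, which is strictly better than $\tfrac{1}{\sqrt2}$ and hence implies the stated inequality. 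For definiteness, the paper constructs an explicit Lipschitz approximation $f_\epsilon$ of the indicator of a closed set and applies dominated convergence, whereas you invoke uniform density of Lipschitz functions in $C(\T^d)$ plus the Riesz representation theorem (applied to real and imaginary parts; note the regularity needed for uniqueness is automatic for Borel measures on the compact metric space $\T^d$) — both are valid, the paper's being more self-contained and yours shorter. Finally, your explicit verification that $W_1(\mu_1,\mu_2)=\|\mu_1-\mu_2\|_{\Lip^*}$ via the same midrange-shift argument makes precise a point the paper only states in passing, namely that the constraint $\|f\|_\infty\le1$ can be dropped on differences of probability-like measures; the only cosmetic omission is that you do not spell out the trivial norm axioms (homogeneity, etc.) of $\|\cdot\|_{\Lip^*}$, which the paper also dismisses as trivial.
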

\begin{proof}
See appendix.
\end{proof}
Considering this metric for the parameter space has two main advantages. First, it links information about nodes and weights in one term. Secondly, the Wasserstein distance allows to compare parameter sets whose cardinality is not necessarily equal.
\begin{Thm}[Global Lipschitz] \label{Cor_global_Lipschitz}
Let $d\geq 2$. For $\hat{\mu}_1,\hat{\mu}_2\in \widehat{\mathcal{M}}^{N,d}_{c_{\min}}\left(\frac{2\sqrt{d}}{N}\right)$ the Prony-map $\mathscr{P}$ satisfies
\begin{align*}
    W_1(\mathscr{P}(\hat{\mu}_1),\mathscr{P}(\hat{\mu}_2)) \leq \sqrt{3M} \left(1+\frac{3}{\sqrt{2}}\right)  \left(\frac{2}{3}\right)^{d/2-1/2} \frac{d^{d/4}}{N^{d/2}} \|\hat{\mu}_1-\hat{\mu}_2\|_2
    \le 2.3 \|\hat{\mu}_1-\hat{\mu}_2\|_2
\end{align*}
for all $\hat{\mu}_1,\hat{\mu}_2 \in \widehat{\mathcal{M}}^{N,d}_{c_{\min}}\left(\frac{2\sqrt{d}}{N}\right)$ if we restrict the Prony-map to node sets with cardinality at most $M$.
\end{Thm}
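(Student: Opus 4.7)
My approach rests on the Kantorovich–Rubinstein duality
\begin{equation*}
W_1(\mu_1,\mu_2)=\sup_{\Lip(f)\le 1}\left|\int_{\T^d}f\,d(\mu_1-\mu_2)\right|
\end{equation*}
together with the local estimate in \Cref{Thm_l2_lower_bound_higherD}. Writing $\mu_j=\mathscr P(\hat\mu_j)=\sum_{t\in Y^{\hat\mu_j}}c_t^{(j)}\delta_t$ and exploiting the normalisation $\hat\mu_1(0)=\hat\mu_2(0)=1$, I may replace any $1$-Lipschitz test $f$ by $f-f(t_0)$ for a fixed $t_0\in Y^{\hat\mu_1}$ without changing the integral. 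I then split the analysis according to whether the small-perturbation hypothesis \eqref{eq_error_small_higherD} is satisfied.

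\emph{Case~1 (local regime).} If $\|\hat\mu_1-\hat\mu_2\|_2^2<(3/2)^{d-1}N^dc_{\min}^2/d^{d/2}$, \Cref{Thm_l2_lower_bound_higherD} yields a bijective matching $\eta\colon Y^{\hat\mu_1}\to Y^{\hat\mu_2}$ with $\|t-\eta(t)\|_{\T^d}\le\sqrt{d}/(2N)$. I decompose
\begin{equation*}
\int f\,d(\mu_1-\mu_2)=\sum_{t}\bigl(c_t^{(1)}-c_{\eta(t)}^{(2)}\bigr)\bigl(f(t)-f(t_0)\bigr)+\sum_{t}c_{\eta(t)}^{(2)}\bigl(f(t)-f(\eta(t))\bigr),
\end{equation*}
bound $|f(t)-f(t_0)|\le\diam(\T^d)=\tfrac12$ in the first sum and $|f(t)-f(\eta(t))|\le\|t-\eta(t)\|_{\T^d}$ in the second, and then apply Cauchy–Schwarz over the at most $M$ atoms. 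This reduces the task to controlling $\sum_t|c_t^{(1)}-c_{\eta(t)}^{(2)}|^2$ and $\sum_t|c_{\eta(t)}^{(2)}|^2\|t-\eta(t)\|_{\T^d}^2$. The first is furnished directly by the weight term in \Cref{Thm_l2_lower_bound_higherD}; for the second I invoke the \emph{unsimplified} node term actually produced by that proof, which carries the weighted factor $|c_t^{(1)}|^2+|c_{\eta(t)}^{(2)}|^2$ in front of $\|t-\eta(t)\|_{\T^d}^2$ instead of its lower bound $2c_{\min}^2$. These two contributions are designed to produce the summands $1$ and $3/\sqrt{2}$ of the prefactor $(1+3/\sqrt 2)$.

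\emph{Case~2 (large perturbation).} When the hypothesis fails, the matching need not exist, and I would pass to the joint node set $Y=Y^{\hat\mu_1}\cup Y^{\hat\mu_2}$ of cardinality $\le 2M$ with signed/complex coefficients $\tilde c_t=c_t^{(1)}\ind_{Y^{\hat\mu_1}}(t)-c_t^{(2)}\ind_{Y^{\hat\mu_2}}(t)$. Combining \Cref{Lem_Wasserstein_Metrik} with Cauchy–Schwarz yields $W_1(\mu_1,\mu_2)\le |\mu_1-\mu_2|(\T^d)/\sqrt 2\le\sqrt M\,\|\tilde c\|_2$. The pairwise-clustering Vandermonde lower bound established in \Cref{sec_Vandermonde} converts $\|\tilde c\|_2$ to $\|\hat\mu_1-\hat\mu_2\|_2$ with a constant swallowed by the same prefactor as in Case~1. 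Finally, the separation $\sep\,Y\ge 2\sqrt d/N$ forces $M\le(N/(2\sqrt d))^d$, which collapses the explicit $(d,N)$-dependent prefactor to the uniform constant $2.3$.

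\emph{Main obstacle.} The central technical difficulty is the absence of any a priori upper bound on the weights $|c_t^{(j)}|$, which blocks the naive Cauchy–Schwarz estimate $\sum|c_{\eta(t)}^{(2)}|\|t-\eta(t)\|\le\sqrt{\sum|c^{(2)}|^2}\sqrt{\sum\|t-\eta(t)\|^2}$ in Case~1 and likewise forbids a naive total-variation bound in Case~2. The remedy is to never isolate the weights from the node distances: in Case~1 the unsimplified node term of \Cref{Thm_l2_lower_bound_higherD} already supplies exactly $\sum\bigl(|c_t^{(1)}|^2+|c_{\eta(t)}^{(2)}|^2\bigr)\|t-\eta(t)\|_{\T^d}^2$, while in Case~2 the clustering Vandermonde bound acts on the joint coefficient vector $\tilde c$ directly; combining the two regimes is then a matter of calibrating the two Cauchy–Schwarz constants so that their sum does not exceed $2.3$.
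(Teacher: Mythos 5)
There is a genuine gap, and it sits exactly where the result has to be \emph{global}: your Case~2. When the smallness hypothesis \eqref{eq_error_small_higherD} fails, you pass to the joint node set $Y=Y^{\hat\mu_1}\cup Y^{\hat\mu_2}$, bound $W_1(\mu_1,\mu_2)\le\sqrt{M}\,\|\tilde c\|_2$, and then want to convert $\|\tilde c\|_2$ into $\|\hat\mu_1-\hat\mu_2\|_2$ via the pair-clustering Vandermonde bound of \cref{Cor_pairs_me}. But that bound is proportional to $N\tau$, where $\tau=\sep Y$ is the separation of the \emph{joint} set, and nothing in the hypotheses of \cref{Cor_global_Lipschitz} prevents a node of $Y^{\hat\mu_1}$ from lying arbitrarily close to (or on top of) a node of $Y^{\hat\mu_2}$, also in the large-perturbation regime. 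Indeed $\sigma_{\min}(\mathscr A)$ genuinely degenerates there, and the intermediate claim $\sqrt{M}\,\|\tilde c\|_2\le C\,\|\hat\mu_1-\hat\mu_2\|_2$ with a uniform $C$ is false: take $M\ge 2$, let one node of $\mu_1$ nearly coincide with a node of $\mu_2$ carrying the same huge weight $A$ (this pair contributes $O(A\|t-\eta(t)\|_{\T^d})$, which can be kept bounded, to both $W_1$ and the moment error), and let a different, well-separated pair of nodes produce the large moment error that puts you in Case~2; then $\|\tilde c\|_2\approx\sqrt2\,|A|\to\infty$ while $\|\hat\mu_1-\hat\mu_2\|_2$ stays bounded. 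In other words, your Case~2 does exactly what your ``main obstacle'' paragraph warns against --- it isolates the weights from the node distances --- and the Vandermonde corollary cannot repair this because its constant carries the factor $\tau$. The paper avoids any case split: it keeps the decomposition $Y_1,Y_2,Y_3$ of \cref{Thm_l2_lower_bound_2D} \emph{without} assuming $Y_3=\emptyset$, bounds $W_1$ after shifting mass so that $\mu_1-\mu_2$ becomes a combination of two differences of probability-like measures (this is where $1+3/\sqrt2$ actually comes from, via \cref{Lem_Wasserstein_Metrik,Lem_Wasser_estimate}), and bounds $\|\hat\mu_1-\hat\mu_2\|_2^2$ from below by the Poisson-summation argument keeping the term $2\psi(0)\sum_{t\in Y_3}|\tilde c_t|^2$ for the unmatched nodes together with the weighted terms $\left(\psi(0)-\psi(|t-\eta(t)|_{\T^d})\right)\|\mathbf c_t\|_2^2$ for the matched pairs; that lower bound needs no smallness assumption, which is what makes the final estimate global and independent of $c_{\min}$.

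A secondary, quantitative problem concerns your Case~1 calibration. For the weight differences you take the term ``furnished directly'' by \cref{Thm_l2_lower_bound_higherD}, whose constant is $\frac{2}{4^d d^{d/2}}N^d$; combined with $\sqrt{M}\le\left(\frac{N}{2\sqrt d}\right)^{d/2}$ this produces a prefactor growing like $2^{d/2}$, so it can neither reproduce the stated $\left(\frac23\right)^{(d-1)/2}$-type constant nor the uniform bound $2.3$. To get the advertised constants you must, as the paper does, go back into the proof and use $\tfrac12\psi(0)$ (rather than $\psi$ evaluated at the corner $\left(\frac q2,\dots,\frac q2\right)$) in front of $|c_t^{(1)}-c_{\eta(t)}^{(2)}|^2$, i.e.\ the same ``unsimplified'' treatment you already invoke for the node term. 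So the skeleton of your Case~1 is salvageable with that adjustment, but Case~2 needs to be replaced by an argument that never separates $\|\tilde c\|_2$ from the geometry of the nearly coincident pairs.
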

We have to emphasize that this result for $\mathscr{P}$ is completely independent of $c_{\min}$. We use the following lemma for the proof:
\begin{Lem} \label{Lem_Wasser_estimate}
For two discrete probability-like measures $\mu_1=\sum_{t\in Y} c_t^{(1)} \delta_t$, $\mu_2=\sum_{t\in Y} c_{\eta(t)}^{(2)} \delta_{\eta(t)}$ with the same cardinality of the node set, we have
\begin{align*}
    W_1\left(\sum_{t\in Y} c_t^{(1)} \delta_t,\sum_{t\in Y} c_{\eta(t)}^{(2)} \delta_{\eta(t)}\right)\leq \sqrt{|Y|} \left[\left(\sum_{t\in Y} \|\mathbf{c}_t\|_2^2 \|t-\eta(t)\|_{\T^d}^2\right)^{1/2} \hspace{-0.3cm}+\frac{1}{\sqrt{2}} \left(\sum_{t\in Y} |c_t^{(1)}-c_{\eta(t)}^{(2)}|^2\right)^{1/2}\right]
\end{align*}
where $\mathbf{c}_t= \left(c_t^{(1)} , -c_{\eta(t)}^{(2)} \right)^T$ as in the proof of \cref{Thm_l2_lower_bound_2D}.
\end{Lem}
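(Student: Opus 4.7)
The plan is to exploit the dual Kantorovich-Rubinstein formulation of $W_1$ together with a decomposition of $\mu_1-\mu_2$ that cleanly separates node displacement from weight mismatch. I would write
\begin{align*}
\mu_1-\mu_2=\nu_1+\nu_2,\qquad \nu_1:=\sum_{t\in Y} c_t^{(1)}(\delta_t-\delta_{\eta(t)}),\qquad \nu_2:=\sum_{t\in Y}\bigl(c_t^{(1)}-c_{\eta(t)}^{(2)}\bigr)\delta_{\eta(t)}.
\end{align*}
Both summands are zero-mass complex measures; for $\nu_2$ this uses the probability-like constraint $\mu_1(\T^d)=\mu_2(\T^d)=1$. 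Since $\nu\mapsto \sup_{\Lip(f)\le 1}|\int f\dint\nu|$ is subadditive, it suffices to control the two pieces separately and add.

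For $\nu_1$, I would simply plug a Lipschitz-$1$ test function $f$ into the definition and invoke the Lipschitz property followed by a single Cauchy-Schwarz inequality over the $|Y|$ summands, using the trivial bound $|c_t^{(1)}|^2\le\|\mathbf{c}_t\|_2^2$. This yields
\begin{align*}
\Bigl|\int f\dint\nu_1\Bigr|\le \sum_{t\in Y}|c_t^{(1)}|\,\|t-\eta(t)\|_{\T^d}\le \sqrt{|Y|}\,\Bigl(\sum_{t\in Y}\|\mathbf{c}_t\|_2^2\|t-\eta(t)\|_{\T^d}^2\Bigr)^{1/2},
\end{align*}
which matches the first summand of the target bound precisely.

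For $\nu_2$ the idea is to realize it as the difference of two probability-like measures so that \cref{Lem_Wasserstein_Metrik} can be applied verbatim. Fix any probability-like reference $\tilde\mu_2$ (for instance the uniform atomic measure $\tilde\mu_2:=\tfrac{1}{|Y|}\sum_{t'\in Y}\delta_{\eta(t')}$) and set $\tilde\mu_1:=\tilde\mu_2+\nu_2$; the latter is again probability-like because $\nu_2(\T^d)=0$. Then, by definition of $W_1$ and \cref{Lem_Wasserstein_Metrik},
\begin{align*}
\sup_{\Lip(f)\le 1}\Bigl|\int f\dint\nu_2\Bigr|=W_1(\tilde\mu_1,\tilde\mu_2)\le \tfrac{1}{\sqrt{2}}|\nu_2|(\T^d)=\tfrac{1}{\sqrt{2}}\sum_{t\in Y}|c_t^{(1)}-c_{\eta(t)}^{(2)}|,
\end{align*}
and a second Cauchy-Schwarz produces the desired $\sqrt{|Y|}/\sqrt{2}$ prefactor in front of the $\ell^2$-norm of the weight differences. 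Adding the two estimates gives the claim.

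The main technical obstacle is the legitimate transfer of the $1/\sqrt{2}$ constant from \cref{Lem_Wasserstein_Metrik}, which is stated for probability-like measures, to the zero-mass complex measure $\nu_2$. This is handled by the symmetrization $\nu_2=\tilde\mu_1-\tilde\mu_2$ above, which exploits the fact that any zero-mass complex measure admits such a trivial representation. Beyond this bookkeeping, all remaining steps are routine Cauchy-Schwarz manipulations on a finite index set and present no analytic difficulties.
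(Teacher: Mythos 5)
Your proof is correct and follows essentially the same route as the paper: you split $\mu_1-\mu_2$ into a node-displacement part $\nu_1$ and a weight-mismatch part $\nu_2$, bound $\nu_1$ by Lipschitz continuity plus Cauchy--Schwarz, and bound $\nu_2$ via \cref{Lem_Wasserstein_Metrik} plus Cauchy--Schwarz; the paper phrases the same decomposition as inserting the intermediate measure $\tilde{\mu}=\sum_{t\in Y}c_t^{(1)}\delta_{\eta(t)}$ and invoking the triangle inequality for $W_1$. One small remark: the symmetrization you set up to apply \cref{Lem_Wasserstein_Metrik} to $\nu_2$ is unnecessary, since $\nu_2$ is already exhibited by the decomposition as the difference $\tilde{\mu}-\mu_2$ of two probability-like measures (both have total mass $1$), so the lemma applies directly without introducing the auxiliary uniform reference measure $\tilde{\mu}_2$.
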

\begin{proof}
See appendix.
\end{proof}
\begin{proof}[Proof of \cref{Cor_global_Lipschitz}]
We use the notation of the proof of \cref{Thm_l2_lower_bound_2D}. The idea of the proof is to split the problem into an estimation for the subset $Y_1$ of nodes in $Y^{\hat{\mu}_1}$ such that there exists a unique close neighbour in $Y^{\hat{\mu}_2}$ on one hand and an estimation for nodes in the remaining node set $Y_3$ on the other hand. Abbreviating $C^{(1)}=\sum_{t\in Y_1} c_t^{(1)}$ and $C^{(2)}=\sum_{t\in Y_1} c_{\eta(t)}^{(2)}$ in the computation
\begin{align*}
    \mu_1-\mu_2&=\sum_{t\in Y_3} \tilde{c}_t \delta_t - \sum_{t\in Y_1} \left(\frac{C^{(2)}-C^{(1)}}{|Y_1|}\right) \delta_t + \sum_{t\in Y_1} \left(\frac{C^{(2)}-C^{(1)}}{|Y_1|}+c_t^{(1)}\right) \delta_t - c_{\eta(t)}^{(2)} \delta_{\eta(t)} \\
    &=\left(C^{(2)}-C^{(1)}\right)\hspace{-0.14cm} \left[\sum_{t\in Y_3} \frac{\tilde{c}_t\delta_t }{C^{(2)}-C^{(1)}} - \sum_{t\in Y_1} \frac{\delta_t}{|Y_1|} \right] \hspace{-0.1cm} + C^{(2)} \hspace{-0.12cm}\left[ \sum_{t\in Y_1} \frac{\frac{C^{(2)}-C^{(1)}}{|Y_1|}+c_t^{(1)}}{C^{(2)}} \delta_t - \frac{c_{\eta(t)}^{(2)}\delta_{\eta(t)}}{C^{(2)}} \right]\hspace{-0.1cm},
\end{align*}
we can shift mass such that we can write $\mu_1-\mu_2$ as a linear combination of two differences of complex measures which are again probability-like. Therefore, we can apply the triangle inequality of $\|\cdot\|_{\Lip^*}$ in order to divide the problem into
\begin{align*}
    W_1(\mu_1,\mu_2)&=\sup_{f:\Lip(f)\leq 1} \left|\int_{\T^d} f(x) d(\mu_1-\mu_2)(x)\right| \\
    &\leq \left|C^{(2)}-C^{(1)}\right| W_1\left(\sum_{t\in Y_3} \frac{\tilde{c}_t\delta_t }{C^{(2)}-C^{(1)}},\sum_{t\in Y_1} \frac{\delta_t}{|Y_1|}\right) \\
    &\quad+\left|C^{(2)}\right| W_1\left(\sum_{t\in Y_1} \frac{\frac{C^{(2)}-C^{(1)}}{|Y_1|}+c_t^{(1)}}{C^{(2)}} \delta_t,\sum_{t\in Y_1} \frac{c_{\eta(t)}^{(2)}\delta_{\eta(t)}}{C^{(2)}}\right).
\end{align*}
The first term can be handled with \cref{Lem_Wasserstein_Metrik} and we obtain
\begin{align*}
    &\quad \left|C^{(2)}-C^{(1)}\right| W_1\left(\sum_{t\in Y_3} \frac{\tilde{c}_t\delta_t }{C^{(2)}-C^{(1)}},\sum_{t\in Y_1} \frac{\delta_t}{|Y_1|}\right) \\
    &\leq \left|C^{(2)}-C^{(1)}\right| \frac{1}{\sqrt{2}} \left(\sum_{t\in Y_3} \frac{|\tilde{c}_t|}{|C^{(2)}-C^{(1)}|} + \sum_{t\in Y_1} \frac{1}{|Y_1|} \right) \\
    &\leq \frac{1}{\sqrt{2}} \left(\sqrt{M} \left(\sum_{t\in Y_3} |\tilde{c}_t|^2\right)^{1/2} + \left|C^{(2)}-C^{(1)}\right|\right).
\end{align*}
On the contrary, \cref{Lem_Wasser_estimate} and $|z+w|^2\leq 2|z|^2+2|w|^2$ for $z,w\in\C$ yield for the second term
\begin{align*}
    &\left|C^{(2)}\right| W_1\left(\sum_{t\in Y_1} \frac{\frac{C^{(2)}-C^{(1)}}{|Y_1|}+c_t^{(1)}}{C^{(2)}} \delta_t,\sum_{t\in Y_1} \frac{c_{\eta(t)}^{(2)}\delta_{\eta(t)}}{C^{(2)}}\right) \\
    &\leq \sqrt{|Y_1|} \left(\sum_{t\in Y_1} \left(\left|\frac{C^{(2)}-C^{(1)}}{|Y_1|}+c_t^{(1)}\right|^2+|c_{\eta(t)}^{(2)}|^2\right) \|t-\eta(t)\|_{\T^d}^2\right)^{1/2} \\
    &\quad +\frac{\sqrt{|Y_1|}}{\sqrt{2}} \left(\sum_{t\in Y_1} \left|\frac{C^{(2)}-C^{(1)}}{|Y_1|}+c_t^{(1)}-c_{\eta(t)}^{(2)}\right|^2\right)^{1/2} \\
    &\leq \sqrt{2|Y_1|} \left(\sum_{t\in Y_1} \|\mathbf{c}_t\|_2^2 \|t-\eta(t)\|_{\T^d}^2\right)^{1/2} \hspace{-0.15cm} + \sqrt{|Y_1|} \left(\sum_{t\in Y_1} |c_t^{(1)}-c_{\eta(t)}^{(2)}|^2\right)^{1/2} + \left(1+\frac{1}{\sqrt{2}}\right) |C^{(2)}-C^{(1)}|.
\end{align*}
As we can observe 
\begin{align*}
\left|C^{(2)}-C^{(1)}\right|=\left|\sum_{t\in Y_3} \tilde{c}_t\right|\leq \sqrt{M} \left(\sum_{t\in Y_3} \left|\tilde{c}_t\right|^2\right)^{1/2}
\end{align*} 
and $(a+b+c)^2\leq 3a^2+3b^2+3c^2$ for $a,b,c\in\R$, we end up with
\begin{align} \label{eq_upper_bound_Wass_squared}
    W_1(\mu_1,\mu_2)^2 \leq 3M \left[\left(1+\frac{3}{\sqrt{2}}\right)^2 \sum_{t\in Y_3} |\tilde{c}_t|^2 + 2 \sum_{t\in Y_1} \|\mathbf{c}_t\|_2^2 \|t-\eta(t)\|_{\T^d}^2 + \sum_{t\in Y_1} |c_t^{(1)}-c_{\eta(t)}^{(2)}|^2\right].
\end{align}
Reviewing the proof of \cref{Thm_l2_lower_bound_2D} and using \cref{Lem_psi_highD}, we bound the difference in the moments from below by
\begin{align*}
    \|\hat{\mu}_1-\hat{\mu}_2\|_2^2&\geq \frac{1}{2\hat{\psi}(0)} \left[2\psi(0) \sum_{t\in Y_3} |\tilde{c}_t|^2 + \sum_{t\in Y_1} \frac{1}{2} \left(\psi(0)-\psi\left(|t-\eta(t)|_{\T^d}\right)\right) \left|c_t^{(1)}+c_{\eta(t)}^{(2)}\right|^2 \right. \\
    &\quad + \frac{1}{2} \left(\psi(0)+\psi\left(|t-\eta(t)|_{\T^d}\right)\right) \left|c_t^{(1)}-c_{\eta(t)}^{(2)}\right|^2 + \left(\psi(0)-\psi\left(|t-\eta(t)|_{\T^d}\right)\right) \|\mathbf{c}_t\|_2^2 \\
    &\quad \left.\vphantom{\sum_{t\in Y}} + \psi\left(|t-\eta(t)|_{\T^d}\right) \left|c_t^{(1)}-c_{\eta(t)}^{(2)}\right|^2 \right] \\
    &\geq \frac{1}{2\hat{\psi}(0)} \left[2\psi(0) \sum_{t\in Y_3} |\tilde{c}_t|^2 + \sum_{t\in Y_1} \left(\vphantom{\sum}\psi(0)-\psi\left(|t-\eta(t)|_{\T^d}\right)\right) \|\mathbf{c}_t\|_2^2 + \frac{1}{2} \psi(0) \left|c_t^{(1)}-c_{\eta(t)}^{(2)}\right|^2 \right] \\
    &=\frac{\left(\frac{3}{2}\right)^{d} N^d}{3d^{d/2}} \left[2 \sum_{t\in Y_3} |\tilde{c}_t|^2 + \frac{d-\frac{1}{2}}{dq^2} \sum_{t\in Y_1} \|t-\eta(t)\|_{\T^d}^2 \|\mathbf{c}_t\|_2^2 + \frac{1}{2} \left|c_t^{(1)}-c_{\eta(t)}^{(2)}\right|^2 \right] \\
    &\geq \frac{\left(\frac{3}{2}\right)^{d} N^d}{3d^{d/2}} \min\left(\frac{2}{\left(1+\frac{3}{\sqrt{2}}\right)^2},\frac{3}{8},\frac{1}{2}\right) \frac{W_1(\mu_1,\mu_2)^2}{3M}
\end{align*}
where we applied \eqref{eq_upper_bound_Wass_squared} and $\frac{d-\frac{1}{2}}{dq^2}\geq \frac{3}{4}$ in the last inequality. Noting $2\left(1+\frac{3}{\sqrt{2}}\right)^{-2}\approx 0.205$ yields the proposed estimate which is finally simplified by noting that $M^{-1/d}\ge q \ge 2\sqrt{d}/N$.
\end{proof}
Finally, we remark that one can also obtain global results for $d=1$ using \cref{Thm_Lipschitz_univariate} and estimates of the Wasserstein metric from the proof of \cref{Cor_global_Lipschitz}. Moreover, the proof of \cref{Cor_global_Lipschitz} shows that the constants in front of $|c_{t}^{(1)}-c_{\eta(t)}^{(2)}|^2$ in \cref{Thm_Lipschitz_univariate,Thm_l2_lower_bound_2D,Thm_l2_lower_bound_higherD} can be improved at the cost of slightly worse constants in the term with $\|t-\eta(t)\|_{\T^d}^2$. 

\subsection{Lower bounds for singular values of Vandermonde matrices} \label{sec_Vandermonde}
As already observed by Diederichs in \cite{Diederichs_19} for $d=1$, the results from the previous subsection yield an estimate for the smallest singular value of the Vandermonde matrix
\begin{align} \label{eq_Vandermonde_multi}
    \mathscr{A}=\left(\eim{tk}\right)_{k\in \{k\in\Z^d: \|k\|_2\leq N\}, t\in Y}
\end{align}
in the situation of a clustered node set $Y$ where the maximal cluster size is two. The reason for this is that $\hat{\mu}_1,\hat{\mu}_2\in \widehat{\mathcal{M}}^{N,d}_{c_{\min}}(2\frac{\sqrt{d}}{N})$ result in a joint node set $Y=Y^{\hat{\mu}_1}\cup Y^{\hat{\mu}_2}$ where at most two nodes have distance smaller than $q:=\frac{\sqrt{d}}{N}$ in the $\|\cdot\|_{\T^d}$-norm, i.e. the cluster size is bounded by two.\footnote{In contrast to \cite{Nagel_20}, we say that nodes form a cluster if they are contained in a cube of side length $\frac{\sqrt{d}}{N}$. The rest of the notation is similar to \cite{Nagel_20}.} We call a cluster $\Lambda\subset Y$ consisting of two nodes \textit{pair cluster} and prove that the \textit{cluster separation} 
\begin{align*}
\dist(\Lambda_i,\Lambda_j)=\min(\|y_1-y_2\|_{\T^d}, y_1\in \Lambda_1, y_2\in \Lambda_2)
\end{align*}
satisfies $\dist(\Lambda_1,\Lambda_2)\geq q$ for any two clusters $\Lambda_1,\Lambda_2\subset Y$. We see this by fixing $y_1\in \Lambda_1\cap Y^{\hat{\mu}_1}$(the case $y_1\in \Lambda_1\cap Y^{\hat{\mu}_2}$ works analogously). If the cluster $\Lambda_1$ only consists of $y_1$, i.e.\,$\Lambda_1=\{y_1\}$, one has directly $\|y_1-y_2\|_{\T^d}\geq q$ for any $y_2\in Y$ by the definition of a cluster. Otherwise, we have $\Lambda_1=\{y_1,y_3\}$ and distinguish for $y_2\in \Lambda_2$ between the cases
\begin{itemize}
    \item $y_2\in Y^{\hat{\mu}_1}$: One has $\|y_1-y_2\|_{\T^d}\geq 2q>q$ by the separation of $ Y^{\hat{\mu}_1}$.
    
    \item $y_2\in Y^{\hat{\mu}_2}$: Using the notation of the nearest neighbour from the previous section, we take $y_3=\eta(y_1)\in Y^{\hat{\mu}_2}$ and observe
    \begin{align*}
        \|y_1-y_2\|_{\T^d}&\geq \|y_2-y_3\|_{\T^d} - \|y_3-y_1\|_{\T^d} \\
        &\geq 2q-q = q.
    \end{align*}
\end{itemize}
Therefore, $\hat{\mu}_1,\hat{\mu}_2\in \widehat{\mathcal{M}}^d_{c_{\min}}(2\frac{\sqrt{d}}{N})$ imply that the \textit{minimal cluster separation} 
\begin{align*}
    \Delta:=\min_{i,j} \dist(\Lambda_i,\Lambda_j)
\end{align*}
satisfies $\Delta\geq q=\frac{\sqrt{d}}{N}$. 
\begin{figure}[h]
    \centering
    \begin{tikzpicture}[>=triangle 45,x=6cm,y=6cm,scale=0.95]
    \draw[->] (0,0) --node[below]{$x_1$} (1.1,0);
    \draw[->] (0,0) node[below]{0} -- node[left]{$x_2$} (0,1.1);
    \draw[-] (1,-2pt) node[below]{1} -- (1,2pt);
    \draw[-] (-2pt,1) node[left]{1} -- (2pt,1);
    \foreach \x in {(0.14,0.26),(0.5,0.45),(0.8,0.2),(0.13,0.8),(0.18,0.22),(0.55,0.47),(0.8,0.17),(0.33,0.77),(0.9,0.6)}
    \draw[mark=*,mark size=2pt,mark options={color=blue}] plot coordinates {\x};
    \draw[|-|] (0.48,0.38) -- node[below]{$\frac{\sqrt{2}}{N}$} (0.58,0.38);
    \foreach \x in {(0.1,0.2),(0.48,0.41),(0.07,0.76),(0.74,0.13),(0.3,0.73),(0.88,0.56)}
    \draw [draw=black] \x rectangle ++(0.1,0.1);
    \draw[|-|] (0.13,0.7) -- node[below]{$\Delta=2\frac{\sqrt{2}}{N}$} (.33,.7);
    \draw[-] (0.8,0.2) -- (.8,.17);
    \draw[->] (0.9,0.15) node[right]{$\tau$} -- (0.81,0.18);
    \draw[->] (0.2,0.1) node[below]{$\Lambda_1$} -- (0.15,.2); 
    \end{tikzpicture}
    \caption{Visualisation of pair-cluster configuration in $d=2$ where nodes $t\in Y$ within the same box of side length $\frac{\sqrt{d}}{N}$ form a cluster and points from different clusters are at least $\Delta=2\frac{\sqrt{d}}{N}$ away from each other.}
    \label{fig:vis_of_clusters}
\end{figure}
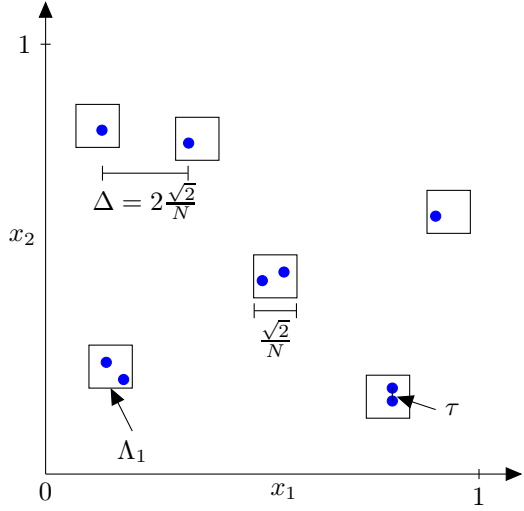
The converse holds with a minor modification by a factor of two: If a node set $Y$ with maximal cluster size two satisfies $\Delta\geq 2q=2\frac{\sqrt{d}}{N}$, one can define a partition of the node set $Y=Y_{\hat{\mu}_1}\cup Y_{\hat{\mu}_2}$ such that $\sep\,Y_{\hat{\mu}_j}\geq 2q$, $j=1,2$. Hence, the condition $\Delta\geq 2\frac{\sqrt{d}}{N}$ is sufficient for the application of \cref{Thm_l2_lower_bound_higherD}.
\begin{Cor}[Pair clustering, our result] \label{Cor_pairs_me}
Assume $d\geq 2$. Let $Y$ be a node set with at most pairwise clustering points, $\sep\,Y\geq \tau$ for some $\tau\in (0,\frac{\sqrt{d}}{N})$ and 
\begin{align*}
\Delta\geq 2q=2\frac{\sqrt{d}}{N}.
\end{align*} 
Then, a lower bound for the smallest non-zero singular value of the corresponding Vandermonde matrix $\mathscr{A}$ in \eqref{eq_Vandermonde_multi} is given by
\begin{align*}
    \sigma_{\min}(\mathscr{A}) \geq \sqrt{\frac{d-\frac{1}{2}}{3d^2}} \left(\frac{3}{2}\right)^{d/2}\frac{(N\tau) N^{d/2}}{ 2^{-1/2} d^{d/4}}.
\end{align*}
\end{Cor}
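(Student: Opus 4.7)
The plan is to reduce the statement to the intermediate bilinear inequality that drives the proof of \cref{Thm_l2_lower_bound_higherD}, specialised to a single arbitrary coefficient vector $c\in\C^{|Y|}$ rather than to a difference of two probability-like measures. Using the hypothesis that the maximal cluster size is two and $\Delta\geq 2q$ with $q=\sqrt{d}/N$, we partition $Y=Y_1\cup Y_2\cup Y_3$, where $Y_3$ collects all singleton clusters and $(Y_1,Y_2)$ split every pair cluster into its two members, producing a bijection $\eta:Y_1\to Y_2$ that matches in-pair partners. By construction, any two distinct clusters have torus distance at least $\Delta\geq 2q$, while $\|t-\eta(t)\|_{\T^d}\geq \tau$ for each $t\in Y_1$ by the overall separation assumption $\sep Y\geq\tau$.

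Next, we reuse the localising function $\psi$ constructed in \cref{subsec_multi} with the window $\varphi$ defined in \eqref{eq_cos_squared} and $q=\sqrt{d}/N$. By \cref{Prop_psi_generell,Lem_psi_highD}, $\psi$ is supported in $[-q,q]^d$ and nonnegative there, while its Fourier transform $\hat\psi$ is nonnegative on the closed $\ell^2$-ball of radius $N$, nonpositive outside, and attains its global maximum at $\hat\psi(0)=4\pi^2 N^2(q/2)^{2d}$. Applying Poisson summation to the double sum $\sum_{t,t'\in Y} c_t\overline{c_{t'}}\sum_{\ell\in\Z^d}\psi(t-t'+\ell)$, the support condition on $\psi$ together with $\Delta\geq 2q$ forces all cross-cluster contributions to vanish, leaving only the diagonal pairs $t=t'$ and the in-pair pairs $\{t,\eta(t)\}$. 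Exactly as in the derivation of \eqref{eq_eigendecomposition}, one obtains
\begin{equation*}
\hat\psi(0)\,\|\mathscr{A}c\|_2^2 \;\geq\; \psi(0)\sum_{t\in Y_3}|c_t|^2 \;+\; \sum_{t\in Y_1}\mathbf{c}_t^{\ast}\mathbf{A}_t\mathbf{c}_t,
\end{equation*}
with $\mathbf{c}_t=(c_t,c_{\eta(t)})^{\top}$ and $\mathbf{A}_t$ the $2\times 2$ symmetric matrix of diagonal $\psi(0)$ and off-diagonal $\psi(|t-\eta(t)|_{\T^d})$.

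Since $\mathbf{A}_t\succeq(\psi(0)-\psi(|t-\eta(t)|_{\T^d}))\,I$ and $\|t-\eta(t)\|_{\T^d}\geq\tau\leq q$, the universal bound \eqref{eq_low_bound_general_x} of \cref{Lem_psi_highD} yields
\begin{equation*}
\psi(0)-\psi(|t-\eta(t)|_{\T^d}) \;\geq\; \bigl(d-\tfrac{1}{2}\bigr)\bigl(\tfrac{3q}{8}\bigr)^{d-1}\pi^2\,\frac{\tau^2}{q^3}.
\end{equation*}
A direct comparison using $\tau<q$ shows that this quantity is dominated by $\psi(0)=(d\pi^2/q)(3q/8)^{d-1}$, so replacing the prefactor on $\sum_{Y_3}|c_t|^2$ by the smaller pair-cluster bound preserves the inequality and gives $\hat\psi(0)\|\mathscr{A}c\|_2^2 \geq (d-\tfrac{1}{2})(3q/8)^{d-1}\pi^2(\tau^2/q^3)\,\|c\|_2^2$. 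Substituting $q=\sqrt{d}/N$ and the explicit value of $\hat\psi(0)$ and taking square roots reduces to the stated lower bound for $\sigma_{\min}(\mathscr{A})$ after routine arithmetic simplification.

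The main technical work, namely the construction of a window $\varphi$ and a localising function $\psi$ with the required support, sign, nonnegativity, and quadratic decay at the origin, has already been carried out in \cref{subsec_multi}; the remaining obstacle here is purely organisational, namely identifying which pairs in the Poisson-summed double sum survive under the pair-clustering structure $\Delta\geq 2q$ and tracking the resulting $2\times 2$ quadratic form per pair cluster.
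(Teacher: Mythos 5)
Your proposal is correct and follows essentially the same route as the paper's own proof: partition $Y$ into pair clusters (inducing the bijection $\eta$) and singletons, apply Poisson summation with the same localising function $\psi$ from \cref{subsec_multi}, reduce to the $2\times2$ quadratic forms $\mathbf c_t^*\mathbf A_t\mathbf c_t$ exactly as in \eqref{eq_eigendecomposition}, and lower-bound the smallest eigenvalue of each $\mathbf A_t$ using \eqref{eq_low_bound_general_x} together with $\|t-\eta(t)\|_{\T^d}\geq\tau$ and the observation that the resulting pair-cluster constant is smaller than $\psi(0)$. The only cosmetic difference is that the paper phrases the computation by first packaging the coefficient vector $c$ into two auxiliary exponential sums $\hat\mu_1,\hat\mu_2$ via the partition $Y=Y_{\hat\mu_1}\cup Y_{\hat\mu_2}$, whereas you work directly with $\|\mathscr A c\|_2^2$; the intermediate inequality and the final arithmetic are identical.
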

\begin{proof}
We know that we can find a suitable partition of the node set $Y=Y_{\hat{\mu}_1}\cup Y_{\hat{\mu}_2}$ and fix some arbitrary $c\in\C^{|Y|}$. Based on this, one can define exponential sums
\begin{align*}
    \hat{\mu}_j(k)=\sum_{t\in Y_{\hat{\mu}_j}} (-1)^{j+1} c_t \eim{tk}, \quad j=1,2,
\end{align*}
with $\hat{\mu}_1(k)-\hat{\mu}_2(k)=\left(\mathscr{A}c\right)_k$.
We follow the lines of the proof of \cref{Thm_l2_lower_bound_higherD} and compute
\begin{align*}
    \|\mathscr{A}c\|_2^2 &= %\sum_{\|k\|_\infty\leq N} |\hat{\mu}_1(k)-\hat{\mu}_2(k)|^2 \\
    %&\geq 
    \sum_{\|k\|_2\leq N} |\hat{\mu}_1(k)-\hat{\mu}_2(k)|^2 \\
    &\geq \frac{\psi(0)}{\hat{\psi}(0)} \sum_{t\in Y_3} |c_t|^2 + \sum_{t\in Y_1} \frac{\psi(0)-\psi(|t-\eta(t)|_{\T^d})}{\hat{\psi}(0)} \left(|c_t|^2+|c_{\eta(t)}|^2\right) \\
    &\geq \left(\frac{3}{2}\right)^{d-1} \tau^2 N^{d+2} \frac{d-\frac{1}{2}}{d^{d/2}d^2} \|c\|_2^2
\end{align*}
where we used \eqref{eq_low_bound_general_x} for the last inequality. Using the relation $\sigma_{\min}(\mathscr{A})=\min_{\|c\|_2=1} \|\mathscr{A}c\|_2$, we derive the proposed lower bound. 
\end{proof}
We compare this to a similar result by Nagel \cite[Cor.\,3.4.15]{Nagel_20} which we modified by a frequency shift to fit into our setting.
\begin{Thm}[Pair clustering, Nagel's result] \label{Prop_Nagel_pairs}
Let $Y$ be a node set with at most pairwise clustering points, $\sep\,Y\geq \tau$ for some $\tau>0$ and 
\begin{align*}
\Delta\geq \frac{6d}{N} \left(\frac{2}{\tau N}\right)^{\frac{1}{d+1}}.
\end{align*} 
Then we have that the smallest non-zero singular value of the corresponding Vandermonde matrix $\mathscr{A}=\left(\eim{tk}\right)_{k\in \{-N,\dots,N\}^d, t\in Y}$ can be bounded from below by
\begin{align*}
    \sigma_{\min}(\mathscr{A}) \geq \frac{1}{6} \frac{(N\tau) N^{d/2}}{2^{-1/2} d^{d/4}}.
\end{align*}
\end{Thm}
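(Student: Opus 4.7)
The plan is to mirror the architecture of the proof of \cref{Cor_pairs_me} while adapting to the cube sampling set $\{-N,\dots,N\}^d$ and to the $\tau$-dependent cluster separation hypothesis. First, because clusters in $Y$ have size at most two and the minimal cluster separation is $\Delta$, I would partition $Y = Y_1 \sqcup Y_2$ by placing the two nodes of each pair cluster into opposite sets, so that $\sep Y_j \geq \Delta$ for $j=1,2$. For an arbitrary $c \in \mathbb{C}^{|Y|}$, define $\hat{\mu}_j(k) = \sum_{t \in Y_j} (-1)^{j+1} c_t \eim{t\cdot k}$ so that $(\mathscr{A}c)_k = \hat{\mu}_1(k) - \hat{\mu}_2(k)$ on the cube. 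Proving $\sigma_{\min}(\mathscr{A}) \gtrsim (N\tau) N^{d/2}/d^{d/4}$ then reduces to a lower bound on $\sum_k|\hat{\mu}_1(k)-\hat{\mu}_2(k)|^2$ in terms of $\|c\|_2^2$.

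Second, I would build a tensor-product localiser $\psi(x) = \prod_{s=1}^d \psi^{(1)}(x_s)$ tailored to the cube, where $\psi^{(1)}\colon\mathbb{R}\to\mathbb{R}$ is a univariate Beurling--Selberg-type minorant obtained as in \cref{Prop_psi_generell}: supported in $[-Q, Q]$ for some $Q < \Delta/2$, with $\hat{\psi}^{(1)}$ nonnegative on $[-N,N]$ and nonpositive outside, and with a global maximum at $0$. Applying Poisson summation exactly as in the proof of \cref{Thm_l2_lower_bound_2D} gives $\hat\psi(0)\|\mathscr{A}c\|_2^2 \geq \sum_{t,t'\in Y} \tilde c_t\overline{\tilde c_{t'}}\sum_{\ell\in\mathbb{Z}^d}\psi(t-t'+\ell)$, and because $\supp\psi\subset[-Q,Q]^d$ with $Q<\Delta/2$ all between-cluster interactions vanish, leaving a sum of $2\times 2$ quadratic forms with the familiar matrix having $\psi(0)$ on the diagonal and $\psi(\|\delta\|)$ off-diagonal, one per pair cluster $\{t,t+\delta\}$.

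Third, bounding the smallest eigenvalue $\psi(0)-\psi(\|\delta\|)$ from below via the Taylor expansion of the tensor product at the origin, in the spirit of \cref{Lem_bound_conv} and \cref{Lem_psi_highD}, and invoking $\|\delta\|\geq \tau$ should produce a bound of the form $\sigma_{\min}(\mathscr{A})^2 \gtrsim (N\tau/(NQ))^{p} (N/\sqrt{d})^d$, where $p$ is the polynomial order of vanishing of $1-\psi^{(1)}/\psi^{(1)}(0)$ at the origin. The parameter $Q$ would then be chosen as small as possible subject to $Q<\Delta/2$ and to the Beurling--Selberg constraints on the Fourier side, so that $(\tau/Q)^p$ is pushed up as far as the hypothesis on $\Delta$ allows.

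The main obstacle, and the step I expect to be the most delicate, is matching the exact exponent $1/(d+1)$ in the hypothesis $\Delta \geq \frac{6d}{N}(2/(\tau N))^{1/(d+1)}$. This unusual exponent strongly suggests that the smoothness order $p$ of the window must be chosen as a function of $d$, so that the pre-factor $(2\pi N)^p$ from \cref{Prop_psi_generell}, the ratio $\psi(0)/\hat{\psi}(0) \asymp (N/\sqrt{d})^d$, and the leading term $\psi(0)-\psi(\tau) \asymp (\tau/Q)^p \psi(0)$ all scale compatibly in the clustering parameter $\tau N$. Producing the clean dimension-independent constant $1/6$ will then require a careful Selberg-type window choice together with a sharp Taylor estimate at zero, and this optimisation between window smoothness, window width, and cluster scales is where I expect the bulk of the technical work to concentrate.
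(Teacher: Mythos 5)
The statement you were asked to prove is not proved in the paper at all: it is quoted (with a frequency shift) from Nagel's thesis \cite[Cor.\,3.4.15]{Nagel_20} purely for the sake of comparison with the paper's own \cref{Cor_pairs_me}. So there is no in-paper proof to match your attempt against, and your strategy has to be judged on its own merits and against what Nagel's condition actually says.

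On those merits, the plan has a structural gap that you already half-sense in your last paragraph. Your approach mirrors \cref{Cor_pairs_me}: build a compactly supported localiser $\psi$ with $\supp\psi\subset[-Q,Q]^d$ for some $Q<\Delta/2$, apply Poisson summation, and observe that the compact support makes all \emph{between}-cluster interactions vanish exactly, leaving a block-diagonal sum of $2\times 2$ forms. This exact decoupling is precisely why the paper's own \cref{Cor_pairs_me} has a cluster-separation condition $\Delta\ge 2\sqrt{d}/N$ that is \emph{independent} of the within-cluster separation $\tau$, a point the paper explicitly advertises as an improvement over Nagel. In the compact-support framework, once $Q<\Delta/2$ there is simply no mechanism by which the lower bound on $\Delta$ could need to grow as $\tau\to 0$; the cross-cluster terms are identically zero, not merely small. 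Consequently your construction cannot produce a hypothesis of the form $\Delta\gtrsim \frac{d}{N}(\tau N)^{-1/(d+1)}$ — the $\tau$-dependence and the exponent $1/(d+1)$ are signatures of an argument that does \emph{not} fully decouple clusters (e.g.\ one that estimates off-diagonal blocks of the Gram matrix and trades their size against the within-cluster singular value, which degrades like $N\tau$). Your speculation that the exponent might be recovered by choosing the window smoothness $p=p(d)$ does not resolve this: changing $p$ alters constants and the Taylor order $\psi(0)-\psi(\tau)\asymp(\tau/Q)^p\psi(0)$, but it does not re-introduce a $\tau$-dependence into the support constraint $Q<\Delta/2$, which is the only place $\Delta$ enters your argument.

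Concretely, if you carry your plan through you will reprove (a cube-sampling analogue of) the paper's \cref{Cor_pairs_me}, with a $\Delta$-condition like $\Delta\gtrsim q$ for whatever $q$ your window supports, and with constants carrying a factor exponential in $d$. You will not recover either Nagel's $\tau$-dependent hypothesis or the dimension-independent constant $1/6$. If your goal is to reproduce Nagel's theorem, you would need to adopt his actual technique, which operates on the Gram matrix $\mathscr{A}^*\mathscr{A}$ with perturbation bounds that do not rely on exact support cancellation.
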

While contrasting these results for the smallest singular value, we have to remark the following:
\begin{itemize}
    \item [(i)] The definition of a cluster is different in the two settings. Whereas in Nagel's language nodes form a cluster if they are contained in a cube of side length $\frac{1}{N}$ (independently of the dimension), we considered a side length of $\frac{\sqrt{d}}{N}$ in order to apply our theory from the previous section.% Hence, our result is more general regarding the expected node configuration.
    \item [(ii)] The condition on the cluster separation is weaker in \cref{Cor_pairs_me} than in \cref{Prop_Nagel_pairs}. Especially, we emphasize that our lower bound for $\Delta$ is independent of $\tau$, i.e.\,$\Delta$ does not need to be adjusted for an arbitrarily small $\tau$.
    \item [(iii)] Our result for the smallest singular value has a exponentially better dimension-dependent constant.
\end{itemize}
Consequently, we were able to prove improved lower bounds for the smallest singular values of Vandermonde matrices in the special case of pairwise clustering nodes. Unfortunately, our method does not provide reasonable results for larger clusters containing $\lambda>2$ nodes in a cube of side length $\frac{\sqrt{d}}{N}$. As in the proof of \cref{Thm_l2_lower_bound_2D}, one would have to bound the eigenvalues of $3\times 3$-matrices $\mathbf{A}_y$ from below in this situation. This is much more difficult than for $2\times 2$-matrices and appears to be impossible for our choice of $\psi$.

\section{Conclusion}
Through the Lipschitz property, we are able to control the difference between two measures by their trigonometric moments. Apart from the intended application for the analysis of deep neural networks in microscopy, the result might be helpful in order to understand how close parametric methods like ESPRIT, performing a mapping from noisy measurements of the moments to an approximation of the measure, are to an \glqq optimal\grqq\,realisation of the Prony-map. Moreover, the setting of \cref{Thm_l2_lower_bound_higherD} directly corresponds to lower bounds for the smallest singular value of a Vandermonde matrix with pairwise clustering nodes. By this, we end up with lower bounds which have the same order in $N$ and the minimal separation of the nodes as the state of the art results, but our results has much weaker assumptions on the separation between different clusters.

\begin{Acknowledgements}
The authors gratefully acknowledge support by the DFG within the Collaborative Research Center 944 
``Physiology and dynamics of cellular microcompartments’’ and by the Volkswagen Foundation project ``Stability of Moment Problems and Super-Resolution Imaging’’.
\end{Acknowledgements}

%\newpage
\bibliography{Literatur}
\section*{Appendix} \label{Appendix}
We complete our presentation by adding some technical proofs.
\begin{proof}[Proof of \cref{Thm_Lipschitz_univariate}]
The proof works as the proof of \cref{Thm_l2_lower_bound_2D} with the localising function
\begin{align*}
    \psi(x)=\left((2\pi N)^2+\frac{\partial^2}{\partial x^2}\right) (\varphi*\varphi)(x)
\end{align*}
where we choose $\varphi$ as in \eqref{eq_cos_squared} with $q=\frac{\kappa}{N}$. Using \cref{Lem_phi_explicit} and the estimates for $\varphi*\varphi$ from  \cref{Lem_bound_conv}, one directly finds
\begin{align} \label{eq_bound_diff_1D}
    \psi(0)-\psi(x)&\geq 
    \begin{cases}
    5\pi^2 \frac{\kappa^2-1}{q^3} |x|^2,&\quad |x|\in \left[0,\frac{q}{2}\right], \\
    \frac{15}{4}\pi^2 \frac{\kappa^2-1}{q^2} |x|,&\quad |x|\in \left[\frac{q}{2},q\right],
    \end{cases} \\
    &\geq \frac{15}{4}\pi^2 \frac{\kappa^2-1}{q^3} |x|^2 \quad \text{for any $|x|\leq q$.} \nonumber
\end{align}
 Together with $\psi(0)=\frac{q\pi^2}{2}\frac{3\kappa^2-1}{q^2}$ and $\hat{\psi}(0)=(2\pi N)^2(q/2)^2$ we obtain the result for general $\kappa$. The optimal $\kappa$ for the lower bound can be found simply by maximising the expression $\left(1-\kappa^{-2}\right) \kappa^{-3}$ over all $\kappa>1$. For $\kappa=1$ one would find $\psi(0)-\psi(x)\geq C |x|^3$ yielding the worse order $|t-\eta(t)|^3$ as already observed in \cite{Diederichs_19}.\par
 Moreover, applying \eqref{eq_bound_diff_1D} and \eqref{eq_cond_univ} enable to find
 \begin{align*}
     \frac{3\kappa^2-1}{2\kappa^3} N c_{\min}^2 > \|\hat{\mu}_1-\hat{\mu}_2\|_2^2 \geq 15\frac{\kappa^2-1}{2q^2\kappa^2} c_{\min}^2 \|t^*-\eta(t^*)\|_{\T}.
 \end{align*}
 for any $t^*\in Y^{\hat{\mu}_1}$ with $\|t^*-\eta(t^*)\|_{\T}\in\left[\frac{q}{2},q\right]$. From here, we can conclude that $\|t^*-\eta(t^*)\|_{\T}\leq \frac{q}{2}$ if $\kappa^2\geq \frac{13}{9}$. Therefore, all $\kappa$ with $\kappa\geq \sqrt{\frac{13}{9}}$ including the optimal $\kappa=\sqrt{\frac{5}{3}}$, allow to improve the estimate by using only the part of \eqref{eq_bound_diff_1D} for $|x|\in\left[0,\frac{q}{2}\right]$. In the course of this, one can bound $\psi(|t-\eta(t)|_{\T})\geq \psi\left(\frac{q}{2}\right)=\frac{\pi^2}{4q} (\kappa^2+1)$ for the term in front of $\sum_{t\in Y^{\hat{\mu}_1}} |c_t^{(1)}-c_{\eta(t)}^{(2)}|^2$.
\end{proof}
\begin{proof}[Proof of \cref{Lem_sequence_of_bounds}]
Assume that there is some $t^*\in Y_{1}$ such that $\|t^*-\eta(t^*)\|_{\T^2}\in\left(\frac{q}{2},q\right]$. Setting $a_0=1$ and $m_0$ according to \cref{Lem_bound_conv}, we obtain by \cref{Lem_lower_bound_for_difference}, assumption \eqref{eq_error_small} and \eqref{eq_eigendecomposition}
\begin{align*}
    \hat{\psi}(0) \frac{3}{4} N^2 c_{\min}^2 > \hat{\psi}(0) \sum_{\genfrac{}{}{0pt}{}{k\in\Z^2}{\|k\|_2\leq N}} |\hat{\mu}_1(k)-\hat{\mu}_2(k)|^2 \geq (1-m_0) \frac{\pi^2}{q} c_{\min}^2\|t^*-\eta(t^*)\|_{\infty} 
\end{align*}
and this leads to $\|t^*-\eta(t^*)\|_{\T^2}\in\left(\frac{q}{2},a_1 q\right]$ where we defined
$a_{1}=\frac{3}{4-4m_0}$. We can repeat this procedure iteratively and generate a sequence of upper bounds $\|t^*-\eta(t^*)\|_{\T^2}\leq a_kq$ with
\begin{align} \label{eq_define_sequence}
    a_{k+1}=\frac{3}{4-4m_k}
\end{align}
for $k\geq 0$, if we can guarantee that $a_k\in\left[\frac{1}{2},1\right]$ implies $a_{k+1}\in\left[\frac{1}{2},1\right]$. Due to the convexity of $\varphi*\varphi$ on $[q/2,q]$ we can use $(\varphi*\varphi)'\left(\frac{q}{2}\right)=-\frac{1}{2}\leq (\varphi*\varphi)'(t) \leq (\varphi*\varphi)'(q)=0$ and the mean value theorem in order to find
\begin{align*}
    a_{k+1}=\frac{3}{4-4(\varphi*\varphi)'(c_k)} 
    \begin{cases}
    \leq \frac{3}{4-4\cdot 0} < 1 \\
    \geq \frac{3}{4-4(-\frac{1}{2})}=\frac{1}{2},
    \end{cases} 
\end{align*}
where $c_k \in\left(\frac{1}{2},a_k\right)$. So the sequence $(a_k)_{k\in\N}$ is well-defined and bounded. Moreover, monotonicity can be proven by defining the auxiliary function
\begin{align*}
    \gamma(t)=-\frac{4}{3} t^2 + \frac{19}{12}t -\frac{1}{2} + \frac{4}{3} \frac{t}{q} (\varphi*\varphi)(qt)
\end{align*}
and rewriting
\begin{align*}
    a_{k+1}-a_k=\frac{3}{4} \frac{\gamma(a_k)}{a_k-7/16-(\varphi*\varphi)(a_kq)/q}.
\end{align*}
One can easily prove that the denominator of the right hand side is positive for $a_k\in\left(\frac{1}{2},1\right]$, whereas the nominator is nonpositive for those $a_k$. Hence, the nonincreasing and bounded sequence converges to some $a\in\left[\frac{1}{2},1\right]$ and by taking the limit $k\to\infty$ on both sides of \eqref{eq_define_sequence} we can validate $a=\frac{1}{2}$ which is the intended contradiction. 
\end{proof}

\begin{proof}[Proof of \cref{Lem_Wasserstein_Metrik}]
At first, we prove the upper bound of the Wasserstein distance by the total variation distance. We separate the real and imaginary part of the measure and find
\begin{align*}
    W_1(\mu_1,\mu_2)&= \sup_{f: \Lip(f)\leq 1} \left|\int_{\T^d} f(x) d(\mu_1-\mu_2)(x)\right| \\
    &\leq \sup_{f: \Lip(f)\leq 1} \left|\int_{\T^d} f(x) d(\Re\mu_1-\Re\mu_2)(x)\right| + \sup_{f: \Lip(f)\leq 1} \left|\int_{\T^d} f(x) d(\Im \mu_1-\Im\mu_2)(x)\right| \\
    &\leq \inf_{\pi\in\Pi(\Re\mu_1,\Re\mu_2)} \int_{\T^d\times \T^d} \|x-y\|_{\T^d} d|\pi|(x,y) + \inf_{\pi\in\Pi(\Im\mu_1,\Im\mu_2)} \int_{\T^d\times \T^d} \|x-y\|_{\T^d} d|\pi|(x,y).
\end{align*}
For a signed measure $\mu$ on measurable set $(X,\Sigma)$ its \textit{Jordan decomposition} is
\begin{align*}
    \mu=\mu_+-\mu_-, \quad \mu_+(A)=\sup_{B\in \Sigma,B\subset A} \mu(B),\quad \mu_-(A)=-\inf_{B\in \Sigma,B\subset A} \mu(B),
\end{align*}
such that $\mu_+$ and $\mu_-$ are nonnegative measures on $(X,\Sigma)$ and $|\mu|=\mu_++\mu_-$ (e.g.\,cf.\,\cite{Axler_20}). Motivated by the proof of \cite[Thm.\,6.15]{Villani_09}, we bound the first infimum by choosing
\begin{align} \label{eq_choosing_pi}
    \pi_1^+=\left(\Id,\Id\right)_{\#}(\Re \mu_1-(\Re\mu_1-\Re\mu_2)_+) + \frac{1}{(\Re\mu_1-\Re\mu_2)_+(\T^d)} (\Re\mu_1-\Re\mu_2)_+\otimes (\Re\mu_1-\Re\mu_2)_- 
\end{align}
where we use $\Id$ for the identity map and the notation of the \textit{push-forward measure} $T_{\#}\mu$ defined by $T_{\#}\mu(A)=\mu(T^{-1}(A))$ for any measurable set $A$ and a measurable function $T$ (e.g.\,cf.\,\cite{Villani_09}).\footnote{Note that the case $(\Re\mu_1-\Re\mu_2)_+(\T^d)=0$ is trivial.} One can directly show that $\pi_1^+$ is admissible as a transport plan, i.e.\,$\pi_1^+\in\Pi(\Re\mu_1,\Re\mu_2)$. Hence, we find for arbitrary fixed $x_0\in\T^d$ by the triangle inequality
\begin{align*}
    &\quad \inf_{\pi\in\Pi(\Re\mu_1,\Re\mu_2)}\int_{\left(\T^d\right)^2} \|x-y\|_{\T^d} d|\pi|(x,y) \\
    &\leq \int_{\left(\T^d\right)^2}  \frac{\|x-y\|_{\T^d}\,d(\Re\mu_1-\Re\mu_2)_+(x) d(\Re\mu_1-\Re\mu_2)_-(y)} {(\Re\mu_1-\Re\mu_2)_+(\T^d)} \\
    &\leq \int_{\T^d} \|x-x_0\|_{\T^d} d(\Re\mu_1-\Re\mu_2)_+(x) + \int_{\T^d} \|y-x_0\|_{\T^d} d(\Re\mu_1-\Re\mu_2)_-(y) \\
    &\leq \frac{1}{2} \int_{\T^d} d(\Re\mu_1-\Re\mu_2)_+(x) + d(\Re\mu_1-\Re\mu_2)_-(x) \\
    &= \frac{1}{2}|\Re\mu_1-\Re\mu_2|(\T^d).
\end{align*}
The term for the imaginary part works analogously and the simple inequality $|\Re z| + |\Im z|\leq \sqrt{2}|z|$ for any $z\in\C$ gives the proposed estimate of $W_1(\mu_1,\mu_2)$ in terms of the total variation. As a direct consequence, one has finiteness of the Wasserstein distance since the total variation of a complex measure is finite (cf.\,\cite[Result\,9.17]{Axler_20}).\par
Nonnegativity, homogeneity, finiteness and the triangle inequality are trivial for $\|\cdot\|_{\Lip^*}$ and the condition $\|f\|_\infty\leq 1$ can be neglected if we consider $\|\mu-\nu\|_{\Lip^*}$ for probability-like $\mu,\nu$ leading to the definition of the Wasserstein metric from \cref{Def_Wasserstein}. In order to show that $\|\cdot\|_{\Lip^*}$ is definite, let $\int_{\T^d} f(x) d\mu(x)=0$ for all Lipschitz continuous functions $f: \T^d\to \R$ and assume that there is a measurable set $A\subset \T^d$ with $\mu(A)\neq 0$. Since $\mu$ is a Borel measure, we can consider $A$ being closed. Denoting the projection operator on $A$ by $\proj_A(x):=\argmin_{y\in A} \|x-y\|_{\T^d}$, we define for any $\epsilon>0$ a function $f_{\epsilon}:\T^d\to \R$,
\begin{align*}
    f_\epsilon(x)=\begin{cases}
    1,&\quad x\in A, \\ 1-\frac{1}{\epsilon}\|\proj_A(x)-x\|_{\T^d},&\quad x\in ((A+B_{\epsilon}(0))\setminus A) \cap \T^d, \\ 0,&\quad \text{else.}
    \end{cases}
\end{align*}
It is straightforward to show that $\Lip(f_{\epsilon})\leq \epsilon^{-1}$ and therefore $0=\int_{\T^d} f_{\epsilon} d\mu$ leading to the contradiction $\mu(A)=0$ by applying the dominated convergence theorem.
\end{proof}

\begin{proof}[Proof of \cref{Lem_Wasser_estimate}]
We separate the difference in the nodes and in the weights by inserting $\tilde{\mu}=\sum_{t\in Y} c_t^{(1)} \delta_{\eta(t)}$ in the computation
\begin{align*}
    W_1(\mu_1,\mu_2)&\leq W_1(\mu_1,\tilde{\mu}) + W_1(\tilde{\mu},\mu_2) \\
    &\leq \sup_{f: \Lip(f)\leq 1} \left|\sum_{t\in Y} c_t^{(1)} [f(t)-f(\eta(t))]\right| + \frac{1}{\sqrt{2}} |\tilde{\mu}-\mu_2|(\T^d)  \\
    &\leq \sum_{t\in Y} \left|c_t^{(1)}\right| \|t-\eta(t)\|_{\T^d} + \sum_{t\in Y} |c_t^{(1)}-c_t^{(2)}| \\
    &\leq \sqrt{|Y|} \left(\sum_{t\in Y} \left\|\mathbf{c}_t\right\|_2^2 \|t-\eta(t)\|_{\T^d}^2  + \sum_{t\in Y} |c_t^{(1)}-c_t^{(2)}|^2\right)
\end{align*}
where we used \cref{Lem_Wasserstein_Metrik} for the second inequality.
\end{proof}

\end{document}